\definecolor{Violet}{named}{violet}
\title{CEM-GMsFEM for Poisson equations in heterogeneous perforated domains}
\date{}
\newtheorem{lemma}{Lemma}
\newtheorem{theorem}{Theorem}
\begin{document}
\author[1]{Wei Xie}
\author[2]{Yin Yang}
\author[3]{Eric Chung}
\author[4]{Yunqing Huang}
\affil[1]{National Center for Applied Mathematics in Hunan, Xiangtan University, Xiangtan 411105, Hunan, China. \texttt{xiew@smail.xtu.edu.cn}}
\affil[2]{School of Mathematics and Computational Science, Xiangtan University, Xiangtan 411105, Hunan, China \texttt{yangyinxtu@xtu.edu.cn}}
\affil[3]{Department of Mathematics, The Chinese University of Hong Kong, Shatin, Hong Kong SAR, \texttt{tschung@math.cuhk.edu.hk}}
\affil[4]{School of Mathematics and Computational Science, Xiangtan University, Xiangtan 411105, Hunan, China \texttt{huangyq@xtu.edu.cn}}

\maketitle

% REQUIRED
\begin{abstract}
In this paper, we propose a novel multiscale model reduction strategy tailored to address the Poisson equation within heterogeneous perforated domains. 
The numerical simulation of this intricate problem is impeded by its multiscale characteristics, necessitating an exceptionally fine mesh to adequately capture all relevant details. 
To overcome the challenges inherent in the multiscale nature of the perforations, we introduce a coarse space constructed using the Constraint Energy Minimizing Generalized Multiscale Finite Element Method (CEM-GMsFEM). 
This involves constructing basis functions through a sequence of local energy minimization problems over eigenspaces containing localized information pertaining to the heterogeneities. 
Through our analysis, we demonstrate that the oversampling layers depend on the local eigenvalues, thereby implicating the local geometry as well. 
Additionally, we provide numerical examples to illustrate the efficacy of the proposed scheme.
\end{abstract}

\section{Introduction}
Among multiscale problems, the problems in perforated domains are of great interest to many applications. These problems are characterized by processes taking place in domains with multiple scales, such as the presence of inclusions within the domain. Applications of perforated domain problems span a wide range, including fluid flow in porous media, diffusion in perforated domains, and mechanical processes in hollow materials, among others. Typically, the interaction between physical processes and heterogeneous media gives rise to challenges in perforated domains.

Numerous model reduction techniques have been developed in the existing literature to enhance computational efficiency for problems involving perforations. For instance, numerical upscaling methods \cite{hillairet2018homogenization, hornung1997homogenization, yosifian1997some} derive upscaled models and solve resulting problems globally on coarse grids, significantly reducing computational costs. Additionally, various multiscale methods have been proposed for simulating such problems. Multiscale finite element methods (MsFEM) \cite{hou1997multiscale, jiang2017reduced} of Crouzeix–Raviart type have been developed for elliptic problems \cite{le2014msfem} and Stokes flows \cite{muljadi2015nonconforming}. The Heterogeneous Multiscale Method (HMM) \cite{henning2009heterogeneous, weinan2003heterognous} discretizes elliptic problems with perforations on coarse grids. Moreover, generalized finite element methods based on the idea of localized orthogonal decomposition (LOD) \cite{maalqvist2014localization, henning2014localized} have been proposed for elliptic problems \cite{brown2016multiscale}.

Recently, the Generalized Multiscale Finite Element Method (GMsFEM) \cite{efendiev2013generalized, chung2016adaptive} has emerged as a promising framework for systematically enriching coarse spaces and incorporating fine-scale information in the construction of these spaces. This approach offers a convincing strategy for solving problems posed in heterogeneous perforated domains, characterized by multiscale features in their solutions and necessitating sophisticated enrichment techniques. The fundamental concept of GMsFEM involves employing local snapshots to approximate the fine-scale solution space, followed by the identification of local multiscale spaces through carefully selected local spectral problems defined within the snapshot spaces. These spectral problems provide a systematic approach to identifying dominant modes in the snapshot spaces, which are then selected to form the local multiscale spaces. By judiciously choosing the snapshot space and the spectral problem, GMsFEM requires only a few basis functions per coarse region to achieve solutions with excellent accuracy. 
Previous works have successfully applied GMsFEM to various problems, including Darcy's flow model, Stokes equations, coupled flow and transport, and others in heterogeneous domains \cite{chung2017online, chung2016mixed, chung2017conservative, chung2018multiscale, tyrylgin2021multiscale}.

\begin{figure}[htbp]
\centering
\begin{tikzpicture}
\draw[fill=teal] (0, 0) rectangle (6, 6);
\draw[fill=white] (0.1, 5.1) rectangle (0.9, 5.9);
\draw[fill=white] (5, 4.5) ellipse (0.4 and 0.2);
\draw[fill=white] (2.5, 1) ellipse (0.2 and 0.4);
\draw[fill=white] (2, 4.5) circle (0.5);
\draw[fill=white] (3.5, 5) circle (0.3);
\draw[fill=white] (5, 2) circle (0.4);
\draw[fill=white] (4, 1) circle (0.4);
\draw[fill=white] (3, 3) circle (0.4);
\draw[fill=white] (3.5, 3) circle (0.04);
\draw[fill=white] (2.5, 3) circle (0.04);
\draw[fill=white] (3, 3.5) circle (0.04);
\draw[fill=white] (3, 2.5) circle (0.04);
\draw[fill=white] (3.27, 3.4) circle (0.04);
\draw[fill=white] (3.27, 2.6) circle (0.04);
\draw[fill=white] (2.72, 3.4) circle (0.04);
\draw[fill=white] (2.72, 2.6) circle (0.04);
\draw[fill=white] (1, 2) circle (0.4);
\draw[fill=white] (1.5, 2) circle (0.04);
\draw[fill=white] (0.5, 2) circle (0.04);
\draw[fill=white] (1, 1.5) circle (0.04);
\draw[fill=white] (1, 2.5) circle (0.04);
\draw[fill=white] (1.27, 2.4) circle (0.04);
\draw[fill=white] (1.27, 1.6) circle (0.04);
\draw[fill=white] (0.72, 2.4) circle (0.04);
\draw[fill=white] (0.72, 1.6) circle (0.04);
\node at (0.5,5.5)   {$\Omega^{\epsilon}$};
\end{tikzpicture}
\caption{Illustration of a perforated domain.}
\label{pic:perforateddomain_example}
\end{figure}
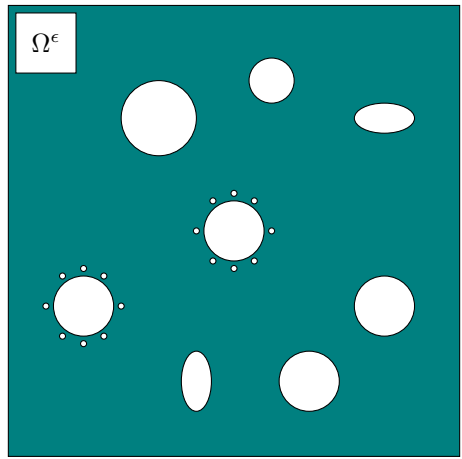

In this work, we present a multiscale model reduction technique for Poisson equation in perforated domain (see \autoref{pic:perforateddomain_example}). Our idea is motivated by the recently-developed Constraint Constraint Energy Minimizing Generalized Multiscale Finite Element Method (CEM-GMsFEM) \cite{chung2018constraint, chung2018fast, mmrbook}. 
This method has been applied successfully in dealing with many problems, e.g., poroelasticity problems \cite{fu2020constraint, fu2019generalized}, quasi-gas problem \cite{chetverushkin2021computational}, stokes problems in perforated domains \cite{chung2021convergence}, and others \cite{cheung2021explicit, ye2023constraint, wang2024multiscale}. 
CEM-GMsFEM is based on the framework of GMsFEM to design multiscale basis functions such that the convergence of the method is independent of the contrast from the heterogeneities; and the error linearly decreases with respect to coarse mesh size if oversampling parameter is appropriately chosen. 
The multiscale basis functions in CEM-GMsFEM consists of two steps. One needs to first construct auxiliary basis functions by solving local spectral problems. Then, for each auxiliary basis function, one can construct a multiscale basis via energy minimization problems on an oversampling subdomains. 
We propose two versions, the first one is based on solving constraint energy minimization problems and the second one is the relax version by solving unconstrained energy minimization problems. In the following, we use constraint version for the former one while relaxed version to distinguish these two method.
Consider the oversampling will lead extra computation, we will show the suitable oversampling layers depends on the eigenvalues of the local domain.

The paper is organized as follows. In Section \ref{sec:preliminaries}, we present the model problem and the fine-scale approximation. In Section \ref{sec:cembasis}, we introduce auxiliary space and the construction of multiscale basis functions, both constraint version and relaxed version. Section \ref{sec:analysis} is devoted to analysis of the approach in Section \ref{sec:cembasis} and show the decay is depends on the eigenvalues of the local domain. We will give some numerical results in Section \ref{sec:numericalresults} to show the efficiency of our proposed method. Conclusions will be included in Section \ref{sec:conclusions}.

\section{Preliminaries} \label{sec:preliminaries}
In this section, we state the Poisson equation in heterogeneous perforated domains and introduce some notations. 
Let $\Omega \subset \mathbb{R}^d (d=2,3)$ be a bounded domain. 
We set $\Omega^{\epsilon} \subset \Omega$ be a perforated domain and $\mathcal{B}^{\epsilon}$ be the perforations with $\Omega$, that is $\Omega^{\epsilon} = \Omega \setminus \mathcal{B}^{\epsilon}$. 
The set of perforations $\mathcal{B}^{\epsilon}$ is assumed to be a union of connected circular disks, see \autoref{pic:perforateddomain_example} for illustration. 
Each of these disks is of diameter of order $0< \epsilon \ll \mathrm{diam}(\Omega)$. 
In perforated domain, we consider 

\begin{equation}
\begin{cases}
\begin{aligned} 
- \Delta u &= f,
& \textnormal{in} ~\Omega^{\epsilon},\\
\frac{\partial u}{\partial \boldsymbol{n}} &= 0 ,
& \textnormal{on}~ 
\partial \Omega^{\epsilon} \cap \partial 
\mathcal{B}^{\epsilon}, \\
u &= 0, & \textnormal{on}~ 
\partial \Omega^{\epsilon} \cap \partial \Omega. \\
\end{aligned}
\end{cases}
\label{eq:pde_perforated}
\end{equation}
where $\boldsymbol{n}$ stands for outward unit normal vectors to $\partial \Omega^{\epsilon}.$
To define the weak formulation of \eqref{eq:pde_perforated}, we introduce the space
$V \coloneqq \{v\in H^1(\Omega^{\epsilon}) : 
v|_{\partial \Omega^{\epsilon} \cap \partial \Omega}=0\}.$
The variational formulation of \eqref{eq:pde_perforated} is: 
Find $u \in V$, such that

\begin{equation}
a(u, v) = (f, v), \quad \forall v \in V,
\label{eq:perforated_variation}
\end{equation}
where

$$
a(u,v) = \int_{\Omega^{\epsilon}} \nabla u \nabla v, \quad 
(f, v) = \int_{\Omega^{\epsilon}} f v.
$$

Let $\mathcal{T}^h$ be a triangulation of perforated domain $\Omega^{\epsilon}$. In here, $h$ is the triangle mesh size, we assume $\mathcal{T}^h$ is sufficiently fine to fully resolve the small-scale information of the domain $\Omega^{\epsilon}.$ We can define a finite space $V_h$ via the linear basis functions on $\mathcal{T}^h.$ The continuous Galerkin (CG) formulation of \eqref{eq:pde_perforated} is : Find $u_h \in V_h$, such that 

\begin{equation}
a(u_h, v_h) = (f, v_h), \quad \forall v_h \in V_h.
\label{eq:perforated_galerkin}
\end{equation}
Let $\varphi_i$ to denote the linear basis functions in $\mathcal{T}^h.$
Then, $V_h = \{\varphi_i\}_{i=1}^N$, where $N$ is the interior nodes within the mesh $\mathcal{T}^h$ and boundary nodes intersecting $\mathcal{B}^{\epsilon}.$
We do not need any hat functions on the boudary of $\Omega$, because the functions in $V_h$ must vanish here. 
Now, using this basis, we can conclude the finite element method \eqref{eq:perforated_galerkin} is equivalent to 

\begin{equation}
a(u_h, \varphi_i) = (f, \varphi_i), \quad \forall \varphi_i \in V_h.
\label{eq:perforated_fem_varphi}
\end{equation}
the equivalent matrix form of \eqref{eq:perforated_fem_varphi} is 

\begin{equation}    
A u_h = F
\label{eq:perforated_fem_matrix}
\end{equation}
where 

$$
A_{ij} = a(\varphi_j, \varphi_i), \quad 
F_i = (f, \varphi_i).
$$

In this paper, we will use a multiscale model reduction technique that construct a multiscale space $V_{\textnormal{ms}}$ to reduce the cost compare to \eqref{eq:perforated_fem_matrix}.
We have to point out that the multiscale space $V_{\textnormal{ms}}$ is a subset of $V$, that is $V_{\textnormal{ms}} \subset V.$
The multiscale solution is: Find $u_{\textnormal{ms}} \in V_{\textnormal{ms}}$ such that

\begin{equation}
a(u_{\textnormal{ms}}, v) = (f, v), \quad \forall v \in V_{\textnormal{ms}}
\end{equation}
In the subsequent sections, we will still use the linear basis functions on $\mathcal{T}^h$ to compute auxiliary space and multiscale space numerically. 
Then, each multiscale basis functions can be linear expressed by $\{\varphi_i\}$, using a discrete vector $\Psi_j$ to denote the coefficient of $j\textnormal{-th}$ multiscale basis. 
We can define an upscaling matrix $R^T$ that stores all the multiscale basis functions (total number is $N_{\textnormal{ms}}$),

$$
R^T = 
\begin{bmatrix}
\Psi_1 & \Psi_2 & \cdots & \Psi_{N_{\textnormal{ms}}}
\end{bmatrix}
$$
The construction of multiscale basis functions will be introduced in Section \ref{sec:cembasis}. Then, the coarse grid solution is $u_H = (R^T A R)^{-1} (R^T F)$, the multiscale solution need to multiply the downscaling matrix $R$, that is $u_{\textnormal{ms}} = R^T u_H.$

\section{The construction of the CEM-GMsFEM basis functions} \label{sec:cembasis}

In this section, we construct multiscale spaces on coarse grid. 
Let $\mathcal{T}^H$ be a conforming partition of the computational domain $\Omega^{\epsilon}$, such that each element is the union of triangle in $\mathcal{T}^h$. Using $H$ represent the coarse mesh size.
Typically, we assume that $0 < h \ll H < \mathrm{diam}(\Omega).$  
Let $N_c$ be the total number of coarse elements and $N_v$ be the total number of vertices of $\mathcal{T}^H$.

\begin{figure}[htbp]
\centering
\begin{tikzpicture}
\centering
\node[anchor=south west,inner sep=0] at (-4,-4) {\includegraphics[height=8cm]{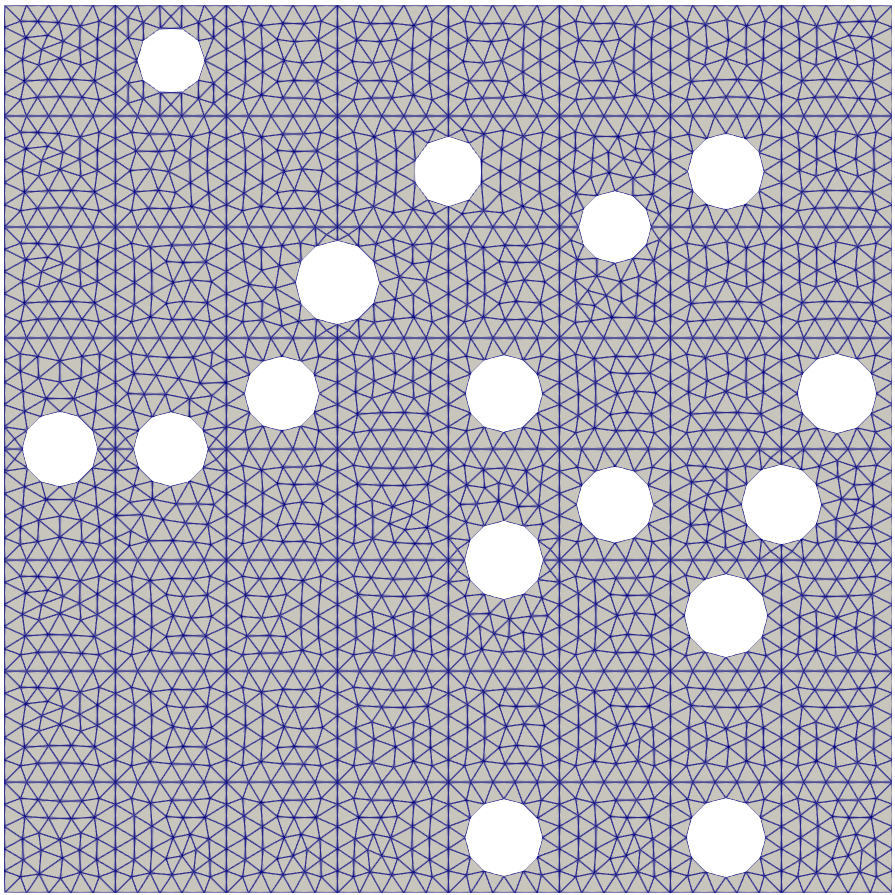}};
\draw[red, line width=2pt] (-4,-4) grid (4,4);
\draw[yellow, line width=2pt] (0,-1) grid (1,0);
% \draw[yellow, line width=2pt] (0,0) -- (1,0);
% \draw[yellow, line width=2pt] (0,0) -- (0,-1);
% \draw[yellow, line width=2pt] (1,0) -- (1,-1);
% \draw[yellow, line width=2pt] (0,-1) -- (0.18,-1);
% \draw[yellow, line width=2pt] (0.72,-1) -- (1,-1);
% \draw[yellow, line width=2pt] (0.85,-1) arc (0:180:0.32);
\draw[green, line width=2pt] (-1,-2) -- (-1,1);
\draw[green, line width=2pt] (-1,-2) -- (2,-2);
\draw[green, line width=2pt] (-1,1) -- (2,1);
\draw[green, line width=2pt] (2,-2) -- (2,1);
\draw[line width=1pt] (1,0) -- (5,-0.2);
\draw[line width=1pt] (1,-1) -- (5,-0.5);
\node at (5.2,-0.3)  {$K_i$};
\draw[line width=1pt] (-1,1) -- (-5,0);
\draw[line width=1pt] (-1,-2) -- (-5,-0.5);
\node at (-5.2,-0.3)  {$K_{i,1}$};
\end{tikzpicture}  
\caption{Illustration of the coarse grid (Red), the fine-grid (Blue) and the oversampling domain (Green).}
\label{pic:perforateddomain_mesh_example}
\end{figure}

The construction of the multiscale spaces consists of two steps. 
The first step is to constuct auxiliary multiscale spaces which the concept is similar of GMsFEM, we will compute a eigenvalue problem in each coarse block $K_i$.
Then, for each $K_{i,m_i}$ that enlarging $K_i$ by $m_i$ coarse grid layers, we will solving the energy minimizing problems in the oversampled domains. 
In \autoref{pic:perforateddomain_mesh_example}, we illustrates the fine-scale triangulation, coarse element $K_i$ and oversampling subdomains $K_{i,1}$.
If the parameter $m_i$ is appropriately chosen, the error demonstrates linear convergence towards the reference solution in relation to the coarse mesh size $H$. 
This assertion is rigorously established and substantiated in the subsequent section.

\subsection{Auxiliary space}
We will construct the auxiliary multiscale basis functionsin in each coarse block $K_i$. Before started, we need define two inner product $a_i(\cdot, \cdot), ~ s_i(\cdot, \cdot)$ in $K_i$, 

\begin{equation}
a_i(v, w)=\int_{K_i} {\nabla}v{\cdot}{\nabla}w, \quad
s_i(v,w)=\int_{K_i} \tilde{\kappa}vw.
\label{eq:as_innerproduct_define}
\end{equation}
where $\tilde{\kappa}= \sum_{j=1}^{N_v} |\nabla \chi_j|^2$ and $\{\chi_j\}_{j=1}^{N_v}$ are the partition of unity functions \cite{babuvska1997partition} that defined on coarse grid. In particular, the function $\chi_j$ satisfies $|\nabla \chi_j| = O(1/H)$ and $0 \le \chi_j \le 1$, we use the Lagrange basis for simplify computation in here. Consider the following local eigenvalue problem, 

\begin{equation}
a_i(\phi_j^i, v) = 
\lambda_j^i s_i(\phi_j^i, v),
\quad \forall v \in H^1(K_i).
\label{eq:aux_weak}
\end{equation}

Solving \eqref{eq:aux_weak}, arrange the eigenvalues in ascending order such that

$$
0 = \lambda_1^i \le \lambda_2^i \le \cdots \le \lambda_{l_i}^i \le \cdots
$$
for each $i\in \{1, 2, \cdots, N_c\}.$ 
We use the first $l_i$ eigenfunctions to construct the local auxilary space, $V_{\textnormal{aux}}^i = \mathrm{span} \{\phi_i^1, \cdots, \phi_i^{l_i}\}$.
We have to note that $\phi_j^i$ are a set of unit orthogonal functions with $s(\cdot, \cdot)$ inner product.
Based on that, we can define the global auxliary space $V_{\textnormal{aux}}=\oplus_{i} V_{\textnormal{aux}}^i$.
Given a function $\phi_j^i \in V_{\textnormal{aux}}$, we say that a function $\psi$ is $\phi_j^i\textnormal{-orthogonal}$ if 

$$
s(\psi, \phi_j^i)=1, \quad
s(\psi, \phi_{j'}^{i'})=0, ~ 
\textnormal{if } j'\ne j \textnormal{ or } i'\ne i.
$$
where $s(u, v) = \sum_{i=1}^{N_c} s_i(u, v).$
We also define a global operator $\pi : V \to V_{\textnormal{aux}}$,

$$
\pi(u) = 
\sum_{i=1}^{N_c} \sum_{j=1}^{l_i} 
\frac{s_i(u, \phi_j^i)}{s_i(\phi_j^i, \phi_j^i)}\phi_j^i,
\quad \forall u \in V.
$$
In particular, we define $\tilde{V}$ as the null space of the projection
$\pi$, namely, $\tilde{V} = \{v \in V ~|~ \pi(v)=0\}$.

\subsection{Multiscale space}
Similar to the previous work \cite{chung2018constraint, mmrbook}, we can construct two different type of multiscale basis functions on a oversampling domain. 
But differently, we choose different oversampling layers for each coarse block with can gives more flexibility and save computations.
For each coarse block $K_i$, we can extend this region by $m_i$ coarse grid layer and obtain an oversampled region $K_{i,m_i}$ (see \autoref{pic:perforateddomain_mesh_example}). 
Then for each auxiliary function $\phi_j^i \in V_{\textnormal{aux}}^i$, the multiscale basis function $\psi_{j,\textnormal{ms}}^i$ can be defined by

\begin{equation}
\psi_{j,\textnormal{ms}}^i =  \mathrm{argmin}
\{a(\psi,\psi) ~ | ~\psi \in V_0(K_{i,m_i}), ~
\psi ~\textnormal{is}~ \phi_j^i\textnormal{-orthogonal} \}
\label{eq:c_mini}
\end{equation}
where $V_0(K_{i,m_i}) = \{v \in H^1(K_{i,m_i}) ~|~
v=0 \textnormal{ in } \partial K_{i,m_i} / (\partial K_{i,m_i} 
\cap \partial \mathcal{B}^{\epsilon}) \}$.
Refer to \cite{fu2018constraint}, by using Lagrange Multiplier, the problem \eqref{eq:c_mini} can be rewritten as the following problem: find an 
$(\psi_{j,\textnormal{ms}}^i, w) \in V_0(K_{i,m_i}) \times V_{\textnormal{aux}}(K_{i,m_i})$, 

\begin{equation}
\begin{cases}
\begin{aligned}
a(\psi_{j,\textnormal{ms}}^i, v) + s(v,w) &= 0,
\quad \forall v\in V_0(K_{i,m_i}), \\
s(\psi_{j,\textnormal{ms}}^i-\phi_j^i, q) &= 0,
\quad \forall q\in V_{\textnormal{aux}}(K_{i,m_i}).
\label{eq:c_variation}
\end{aligned}
\end{cases}
\end{equation}
where $V_{\textnormal{aux}}(K_{i,m_i})$ is the union of all local auxiliary spaces for $K_j \subset K_{i,m_i}.$ 
In fact, $\psi_{j,\textnormal{ms}}^i$ can be numerically solved on the fine scale mesh. Besides, $w$ belongs to auxilary space, it can be expressed by $\phi_j^i$.
We using $A^i, S^i$ to denote the stiff matrix and mass matrix in $K_{i,m_i}$ that defined by inner product in  \eqref{eq:as_innerproduct_define}. 
The column of matrix $P^i$ is the discrete style of all auxiliary basis function includes in $V_{\textnormal{aux}}(K_{i,m_i})$, the matrix form of \eqref{eq:c_variation} is,

\begin{equation}
\begin{bmatrix}
A^i & S^i P^i \\[0.5em]
(S^i P^i)^T & \boldsymbol{0}
\end{bmatrix}
\begin{bmatrix}
\psi_h^i \\[0.5em] w_h
\end{bmatrix} = 
\begin{bmatrix}
\boldsymbol{0} \\[0.5em] I
\end{bmatrix}
\end{equation}
where $I$ is a sparse matrix whose nonzero elements (all are 1) depends on the index order of the corresponding auxiliary basis function $\phi_j^i$ in $V_{\textnormal{aux}}(K_{i,m_i})$. 
For our convergence analysis, we need to define a global basis functions. 
The global multiscale basis function of constraint version is $\psi_j^i \in V$ is defined in a similar way, namely, 

\begin{equation}
\psi_{j}^i = \mathrm{argmin} \{ a(\psi,\psi) ~|~
\psi \in V, ~ \psi \textnormal{ is }
\phi_j^i \textnormal{-orthogonal}\}.
\label{eq:c_mini_global}
\end{equation}
We can also get the equivalent variational problem,

\begin{equation}
\begin{cases}
\begin{aligned}
a(\psi_j^i, v) + s(v,w) &= 0, \quad \forall v\in V, \\
s(\psi_j^i-\phi_j^i, q) &= 0, \quad \forall q\in V_{\textnormal{aux}}.
\label{eq:c_variation_global}
\end{aligned}
\end{cases}
\end{equation}

Following \cite{chung2018constraint}, we can relax the $\phi\textnormal{-orthogonality}$ in \eqref{eq:c_mini} and get a relaxed version of the multiscale basis functions.
More specifically, we solve the following un-constrainted minimization problem: Find $\psi_{j,\textnormal{ms}}^i \in V_0(K_{i,m_i})$ such that

\begin{equation}
\psi_{j,\textnormal{ms}}^i = \mathrm{argmin}
\{ a(\psi,\psi)+s(\pi(\psi)-\phi_j^i, \pi(\psi)-\phi_j^i)~|~
\psi \in V_0(K_{i,m_i})\}.
\label{eq:r_mini}
\end{equation}
we can also derive the variation form of \eqref{eq:r_mini},

\begin{equation}
a(\psi_{j,\textnormal{ms}}^i, v) + s(\pi(\psi_{j,\textnormal{ms}}^i), \pi(v)) = 
s(\phi_j^i, \pi(v)), \quad 
\forall v \in V_0(K_{i,m_i}).
\label{eq:r_variation}
\end{equation}
Using the same notation before, the matrix form of \eqref{eq:r_variation} is,

\begin{equation}
\left(
A^i + (S^iP^i)(S^{i}P^{i})^T
\right)
\psi_{j,h}^i = P_j^i S^{i,T}
\label{eq:r_matrix}
\end{equation}
where $P_j^i$ is the $j\textnormal{-th}$ eigenfunction in $V_{\textnormal{aux}}^i.$
The relaxed version of global multiscale basis function $\psi_j^i \in V$ is defined in a similar way, namely, 

\begin{equation}
\psi_{j}^i = \mathrm{argmin}
\{ a(\psi,\psi)+s(\pi(\psi)-\phi_j^i, \pi(\psi)-\phi_j^i)~|~
\psi \in V\}.
\label{eq:r_mini_global}
\end{equation}
which is equivalent to the following variational form

\begin{equation}
a(\psi_j^i, v) + s(\pi(\psi_j^i), \pi(v)) = 
s(\phi_j^i, \pi(v)), \quad 
\forall v \in V.
\label{eq:r_variation_global}
\end{equation}

We have to note that multiscale basis functions $\psi_{j,\textnormal{ms}}^i$ is corresponding to auxiliary basis $\phi_j^i$ one by one both for constraint version and relaxed version.
After we construct multiscale basis functions for each $K_{i,m_i}$, we can define the multiscale space, 

$$
V_{\textnormal{ms}} =  \mathrm{span} \{ \psi_{j,\textnormal{ms}}^i ~|~
1 \le j \le l_i, 1 \le i \le N_c \}.
$$
Then the multiscale approximation is

\begin{equation}
a(u_{\textnormal{ms}}, v) = (f, v), 
\quad \forall v \in V_{\textnormal{ms}}.
\label{eq:perforated_galerkin_ms}
\end{equation}

Through this two different type of global multiscale basis function, 
we can also define $V_{\textnormal{glo}}$ is

$$
V_{\textnormal{glo}} = \mathrm{span} \{ \psi_j^i ~|~
1 \le j \le l_i, 1 \le i \le N_c \}.
$$
Then the global approximation is

\begin{equation}
a(u_{\textnormal{glo}}, v) = (f, v), \quad \forall v \in V_{\textnormal{glo}}.
\label{eq:perforated_galerkin_glo}
\end{equation}
This global multiscale finite element space $V_{\textnormal{glo}}$ satisfies a very important orthogonality property, which will be used in our convergence analysis. 
Recall $\tilde{V}$ is the null space of projection $\pi$, for any $v \in \tilde{V}$, constraint version and relaxed version of global multiscale basis function $\psi_j^i$ can conclude, $a(\psi_j^i, v)=0$, which is obviously in \eqref{eq:c_variation_global} and \eqref{eq:r_variation_global}. Therefore, we have $\tilde{V} \subset V_{\textnormal{glo}}^{\bot}.$ 
Since $\mathrm{dim}(V_{\textnormal{glo}}) = \mathrm{dim}(V_{\textnormal{aux}})$, we have $\tilde{V} = V_{\textnormal{glo}}^{\bot}.$
Thus, we can conclude $V = V_{\textnormal{glo}} \oplus \tilde{V}.$

\section{Convergence analysis} \label{sec:analysis}
In this section, we establish the convergence of the method introduced in Section \ref{sec:cembasis}. 
Before proving the convergence of the method, we need to define some notations and some lemma that suitable for both constraint version and relaxed version. 
We will define three different norms. 
The first norm is $L^2\textnormal{-norm} ~ \Vert \cdot \Vert$ where $ \Vert u \Vert = \int_{\Omega^{\epsilon}} u^2.$
Second is the $a\textnormal{-norm} ~ \Vert\cdot\Vert_a$
where $\Vert u \Vert_a^2 = 
\int_{\Omega^{\epsilon}} |\nabla u|^2$, 
while the last is $s\textnormal{-norm} ~ \Vert\cdot\Vert_s$
where $\Vert u \Vert_s^2 = 
\int_{\Omega^{\epsilon}} \tilde{\kappa} u^2.$
For a given subdomain $\omega \subset \Omega^{\epsilon}$,
we will define the local $a\textnormal{-norm}$ and $s\textnormal{-norm}$ 
by $\Vert u \Vert_{a(\omega)}^2 = 
\int_{\omega} |\nabla u|^2$ and
$\Vert u \Vert_{s(\omega)}^2 = 
\int_{\omega} \tilde{\kappa} u^2.$

In this work, we need to define $\Lambda, \Gamma$ as the following.

$$
\Lambda_{\omega} = \min_{K_i \cap \omega \ne \emptyset} \lambda_{l_i+1}^i, \quad
\Gamma_{\omega} = \max_{K_i \cap \omega \ne \emptyset} \lambda_{l_i}^i,
$$
where $\omega$ is a subset of $\Omega^{\epsilon}$.
Similiar with \cite{chung2018constraint, chung2021convergence}, we need the cutoff function to estimate the difference between the global and multiscale basis function. 
For each $K_i$, consider that $K_{i,m_i} \subset \Omega^{\epsilon}$ denotes the oversampling coarse region obtained by enlarging $K_i$ with $m_i$ additional coarse layers.
For $M_i>m_i$, we define $\chi_i^{M_i,m_i} \in \mathrm{span} 
\{\chi_j\}_{j=1}^{N_v}$ such that $0\le \chi_i^{M_i,m_i} \le 1$ and 

\begin{equation}
\chi_i^{M_i,m_i} = 
\begin{cases}
\begin{aligned}
1~ &\textnormal{ in } K_{i,m_i}, \\
0~ &\textnormal{ in } \Omega^{\epsilon} \setminus K_{i,M_i}. \\
\end{aligned}
\end{cases}
\label{eq:def_cutoff}
\end{equation}
Note that, we have $K_{i,m_i} \subset K_{i,M_i} \subset \Omega^{\epsilon}.$
The prove is very similar to the previous work \cite{chung2018constraint, chung2021convergence}, but we will prove the oversampling layers $k_i$ is also dependent of the eigenvalues in the oversampling domain, and define $k = \max_{1\le i \le N_c} k_i$.
Before analyse the multiscale solution, we need some estimate between different norms for later use in the analysis.

\begin{lemma} \label{lemma:aux_property}
Let $k_i \ge 2$ be an integer and define $W \coloneqq \{ v \in V ~|~ v|_{K_i} \notin V_{\textnormal{aux}}^i \}$. Then, the following inequalities hold

\begin{enumerate}[label=(\roman*)]
\item if $v \in V_{\textnormal{aux}}^i$, $\Vert v \Vert_{a(K_i)}^2 \le \lambda_{l_i}^i \Vert v \Vert_{s(K_i)}^2.$
\item if $v \in W$, $\Vert v \Vert_s^2 \le \Lambda_{\mathrm{supp}(v)}^{-1} \Vert v \Vert_a^2.$
\item if $v \in V$, $\Vert v \Vert_s^2 \le \Lambda_{\mathrm{supp}(v)}^{-1} \Vert (I-\pi) v \Vert_a^2 
+ \Vert \pi(v) \Vert_s^2.$
\item if $v \in V$, 
$$
\Vert (1-\chi_i^{k_i,k_i-1}) v \Vert_a^2 \le 2(1+\Lambda_{\mathrm{supp}(v)}^{-1}) 
\Vert v \Vert_{a(\Omega^{\epsilon}\setminus K_{i,k_i-1})}^2 + 
2\Vert \pi(v) \Vert_{s(\Omega^{\epsilon} \setminus K_{i,k_i-1})}^2.
$$
\item if $v \in V$, $\Vert (1-\chi_i^{k_i,k_i-1}) v \Vert_s^2 \le \Lambda_{\mathrm{supp}(v)}^{-1}
\Vert v \Vert_{a(\Omega^{\epsilon}\setminus K_{i,k_i-1})}^2 + 
\Vert \pi(v) \Vert_{s(\Omega^{\epsilon} \setminus K_{i,k_i-1})}^2.$
\end{enumerate}
\end{lemma}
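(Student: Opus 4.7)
\textbf{Proof proposal for Lemma \ref{lemma:aux_property}.}

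The plan is to treat the five inequalities in order, since each leans on the one above it. For (i), I would expand an arbitrary $v \in V_{\textnormal{aux}}^i$ in the local eigenbasis as $v = \sum_{j=1}^{l_i} c_j \phi_j^i$. Because $\{\phi_j^i\}$ is $s_i$-orthonormal and $a_i$-orthogonal with $a_i(\phi_j^i,\phi_j^i) = \lambda_j^i$, one immediately gets $\|v\|_{a(K_i)}^2 = \sum_j \lambda_j^i c_j^2 \le \lambda_{l_i}^i \sum_j c_j^2 = \lambda_{l_i}^i \|v\|_{s(K_i)}^2$. For (ii), interpreting $W$ as those $v$ whose restriction to each relevant $K_i$ lies in the $s_i$-orthogonal complement of $V_{\textnormal{aux}}^i$ (i.e.\ the functions with $\pi v = 0$ on the support), the Courant--Fischer min-max characterization of \eqref{eq:aux_weak} gives $\|v\|_{s(K_i)}^2 \le (\lambda_{l_i+1}^i)^{-1} \|v\|_{a(K_i)}^2$ for each $K_i$ intersecting $\mathrm{supp}(v)$; summing and bounding $\lambda_{l_i+1}^i \ge \Lambda_{\mathrm{supp}(v)}$ yields the claim.

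For (iii), I would use the $s$-orthogonal splitting $v = (I-\pi)v + \pi(v)$ so that $\|v\|_s^2 = \|(I-\pi)v\|_s^2 + \|\pi(v)\|_s^2$. Since $(I-\pi)v \in W$, part (ii) applied to $(I-\pi)v$ gives $\|(I-\pi)v\|_s^2 \le \Lambda_{\mathrm{supp}(v)}^{-1} \|(I-\pi)v\|_a^2$, and combining with the Pythagorean identity above produces the stated bound.

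Parts (iv) and (v) are cutoff computations. For (v), since $0 \le 1 - \chi_i^{k_i,k_i-1} \le 1$ and this factor vanishes on $K_{i,k_i-1}$, we have
\[
\|(1-\chi_i^{k_i,k_i-1}) v\|_s^2 \le \|v\|_{s(\Omega^{\epsilon} \setminus K_{i,k_i-1})}^2,
\]
and applying (iii) with domain $\Omega^{\epsilon} \setminus K_{i,k_i-1}$ finishes the estimate. Part (iv) is where the real work lies: the Leibniz rule gives $\nabla((1-\chi_i^{k_i,k_i-1})v) = -(\nabla \chi_i^{k_i,k_i-1})v + (1-\chi_i^{k_i,k_i-1})\nabla v$, so by $(a+b)^2 \le 2a^2+2b^2$ I would bound
\[
\|(1-\chi_i^{k_i,k_i-1}) v\|_a^2 \le 2\|(\nabla \chi_i^{k_i,k_i-1})v\|_{L^2}^2 + 2\|v\|_{a(\Omega^{\epsilon} \setminus K_{i,k_i-1})}^2.
\]
Since $\chi_i^{k_i,k_i-1}$ lies in the partition-of-unity span with $|\nabla \chi_i^{k_i,k_i-1}|^2 \le \tilde\kappa$ and its gradient is supported outside $K_{i,k_i-1}$, the first term is bounded by $2\|v\|_{s(\Omega^{\epsilon}\setminus K_{i,k_i-1})}^2$; applying (iii) on this region converts the $s$-norm into the two pieces on the right-hand side.

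I expect (iv) to be the main obstacle, because one must simultaneously keep track of (a) the $\tilde\kappa$--versus--$|\nabla\chi|^2$ bookkeeping that justifies pushing $|\nabla \chi|^2 v^2$ into an $s$-norm, (b) the localization of the support of $\nabla \chi_i^{k_i,k_i-1}$ so the resulting norm is taken only over $\Omega^{\epsilon}\setminus K_{i,k_i-1}$, and (c) the constants arising when (iii) is applied on a subregion rather than the whole domain. Parts (i)--(iii) and (v) should be short once the framework is set up.
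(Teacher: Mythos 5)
Your proposal is correct and follows essentially the same route as the paper: eigenexpansion for (i), the spectral lower bound for the orthogonal complement in (ii) (the paper expands directly in the modes $j\ge l_i+1$, which is the same content as your Courant--Fischer appeal), the $s$-orthogonal splitting plus (ii) for (iii), and the Leibniz-rule/cutoff computation with $|\nabla\chi_i^{k_i,k_i-1}|^2\le\tilde\kappa$ followed by (iii) for (iv) and (v). No substantive differences.
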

\begin{proof}
For any $z \in H^1(K_i)$, we can present $z = \sum_{j\ge 1} \alpha_j^i \phi_j^i$ with $\alpha_j^i \in \mathbb{R}.$ 

\begin{enumerate}[label=(\roman*)]
\item 
Since $v \in V_{\textnormal{aux}}^i$, then $\alpha_j^i=0$
for $j\ge l_i+1.$ Recall the eigenfunctions that using local spectral problem 
\eqref{eq:aux_weak} are orthogonal to each other, we obtain

$$
\Vert v \Vert_{a(K_i)}^2 =
\sum_{j=1}^{l_i} \alpha_j^i \lambda_j^i
s(\phi_j^i, v) \le \lambda_{l_i}^i \sum_{j=1}^{l_i} 
\alpha_j^i s(\phi_j^i, v) = \lambda_{l_i}^i \Vert v \Vert_{s(K_i)}^2.
$$
\item For any $v \in W$, 
then $v = \sum_{i=1}^{N_c} \sum_{j\ge l_i+1} \alpha_j^i \phi_j^i.$

$$
\begin{aligned}
\Vert v \Vert_a^2 
=& \sum_{K_i \subset \mathrm{supp}(v)} \sum_{j \ge l_i+1} \alpha_j^i a_i(\phi_j^i, v) \\
=& \sum_{K_i \subset \mathrm{supp}(v)} \sum_{j \ge l_i+1} \alpha_j^i \lambda_j^i s_i(\phi_j^i, v) \\
\ge&  \Lambda_{\mathrm{supp}(v)} \sum_{i=1}^{N_c} \sum_{j \ge l_i+1}
\alpha_j^i s(\phi_j^i, v)  \\
=& \Lambda_{\mathrm{supp}(v)} \Vert v \Vert_s^2.
\end{aligned}
$$
\item For any $v \in V$,
$$
\begin{aligned}
\Vert v \Vert_s^2 &\le \Vert (I-\pi)v \Vert_s^2 +
\Vert \pi(v) \Vert_s^2 \\
&\le \Lambda_{\mathrm{supp}(v)}^{-1} \Vert (I-\pi)v \Vert_a^2 +
\Vert \pi(v) \Vert_s^2.
\end{aligned}
$$
where operator $I$ means the identity operator, 
the last inequality is follows from (ii).
\item By using the property of cutoff function 
$\chi_i^{k_i,k_i-1}$ and (iii),

$$
\begin{aligned}
&\Vert (1-\chi_i^{k_i,k_i-1}) v \Vert_a^2 \\
=& \int_{\Omega^{\epsilon} \setminus K_{i,k_i-1}}
\left|\nabla 
\left( (1-\chi_i^{k_i,k_i-1}) v \right)\right|^2 \\
\le& 2
\int_{\Omega^{\epsilon} \setminus K_{i,k_i-1}} 
(1-\chi_i^{k_i,k_i-1})^2 |\nabla v |^2 + 
|v \nabla \chi_i^{k_i,k_i-1}|^2 \\
\le& 2(
\Vert v \Vert_{a(\Omega^{\epsilon} \setminus K_{i,k_i-1})}^2 +
\Vert v \Vert_{s(\Omega^{\epsilon} \setminus K_{i,k_i-1})}^2) \\
\le& 2(1+\Lambda_{\Omega^{\epsilon} \setminus K_{i,k_i-1}}^{-1}) 
\Vert v \Vert_{a(\Omega^{\epsilon}\setminus K_{i,k_i-1})}^2 + 
2\Vert \pi(v) \Vert_{s(\Omega^{\epsilon} \setminus K_{i,k_i-1})}^2
\end{aligned}
$$
\item For any $k_i \ge 2$ and combine (iii), we have

$$
\begin{aligned}
\Vert (1-\chi_i^{k_i,k_i-1}) v \Vert_s^2 &\le 
\Vert v \Vert_{s(\Omega^{\epsilon} \setminus K_{i,k_i-1})}^2 \\
&\le \Lambda_{\Omega^{\epsilon} \setminus K_{i,k_i-1}}^{-1} \Vert v 
\Vert_{a(\Omega^{\epsilon} \setminus K_{i,k_i-1})}^2 + 
\Vert \pi(v) \Vert_{s(\Omega^{\epsilon} \setminus K_{i,k_i-1})}^2.
\end{aligned}
$$
\end{enumerate}
This completes the proof.
\end{proof}

To prove the convergence result of the proposed method, we need to recall the  convergence result of using the global multiscale basis functions in \cite{chung2018constraint}. 

\begin{lemma} \label{lemma:uuglo_esti}
Let $u$ be the solution of \eqref{eq:perforated_variation} and 
$u_{\textnormal{glo}}$ be the solution of \eqref{eq:perforated_galerkin_glo}.
We have 

$$
\Vert u - u_{\textnormal{glo}} \Vert_a \le C \Lambda_{\Omega^{\epsilon}}^{-\frac{1}{2}}
\Vert \tilde{\kappa}^{-\frac{1}{2}}f \Vert.
$$
Moreover, if $\{\chi_j\}_{j=1}^{N_c}$ is a set of bilinear 
partition of unity, we have

$$
\Vert u - u_{\textnormal{glo}} \Vert_a \le C
H \Lambda_{\Omega^{\epsilon}}^{-\frac{1}{2}} \Vert f \Vert.
$$
\end{lemma}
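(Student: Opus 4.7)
The plan is to exploit the $a$-orthogonal decomposition $V = V_{\textnormal{glo}} \oplus \tilde{V}$ established at the end of Section \ref{sec:cembasis}, where $\tilde{V}$ is the null space of the projection $\pi$. Setting $e \coloneqq u - u_{\textnormal{glo}}$, the Galerkin formulations \eqref{eq:perforated_variation} and \eqref{eq:perforated_galerkin_glo} give $a(e, v) = 0$ for all $v \in V_{\textnormal{glo}}$, so $e$ lies in the $a$-orthogonal complement of $V_{\textnormal{glo}}$, which by the above decomposition coincides with $\tilde{V}$. In particular, $\pi(e) = 0$.

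With $e \in \tilde{V}$ in hand, the next step is to derive the key energy identity. Taking $v = e$ in \eqref{eq:perforated_variation} and using the orthogonality $a(u_{\textnormal{glo}}, e) = 0$,
\begin{equation*}
\Vert e \Vert_a^2 = a(u, e) - a(u_{\textnormal{glo}}, e) = (f, e).
\end{equation*}
The right-hand side is then handled by a weighted Cauchy--Schwarz inequality,
\begin{equation*}
(f, e) = \int_{\Omega^{\epsilon}} \tilde{\kappa}^{-1/2} f \cdot \tilde{\kappa}^{1/2} e \le \Vert \tilde{\kappa}^{-1/2} f \Vert \, \Vert e \Vert_s.
\end{equation*}
Since $\pi(e) = 0$, every local component of $e$ lies outside $V_{\textnormal{aux}}^i$, so $e \in W$ and Lemma \ref{lemma:aux_property}(ii) yields $\Vert e \Vert_s \le \Lambda_{\Omega^{\epsilon}}^{-1/2} \Vert e \Vert_a$. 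Chaining these inequalities and dividing by $\Vert e \Vert_a$ produces the first bound.

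For the sharper $O(H)$ estimate, the extra ingredient is a pointwise lower bound on $\tilde{\kappa}$. When $\{\chi_j\}$ is the bilinear nodal partition of unity on $\mathcal{T}^H$, each vertex function varies from $0$ to $1$ across a coarse cell of diameter $H$, and a direct computation of $\sum_j |\nabla \chi_j|^2$ on a reference element shows $\tilde{\kappa} \ge c H^{-2}$ pointwise. Hence $\Vert \tilde{\kappa}^{-1/2} f \Vert \le C H \Vert f \Vert$, and substituting into the first bound produces the advertised linear convergence in $H$.

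The main technical subtlety is justifying $e \in \tilde{V}$: Galerkin orthogonality only immediately places $e$ in the $a$-orthogonal complement of $V_{\textnormal{glo}}$, and the identification of this complement with $\tilde{V}$ relies precisely on the dimension count $\dim V_{\textnormal{glo}} = \dim V_{\textnormal{aux}}$ recorded in Section \ref{sec:cembasis}. Once that decomposition is invoked, the remainder reduces to one application of Cauchy--Schwarz and one invocation of Lemma \ref{lemma:aux_property}(ii), and I do not anticipate any further obstacle.
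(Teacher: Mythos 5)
Your proof is correct. The paper does not actually write out a proof of this lemma---it only cites \cite{chung2018constraint}---and your argument is precisely the standard one from that reference: Galerkin orthogonality together with the decomposition $V = V_{\textnormal{glo}} \oplus \tilde{V}$ places $e = u - u_{\textnormal{glo}}$ in $\tilde{V}$, the identity $\Vert e \Vert_a^2 = (f,e)$ combined with weighted Cauchy--Schwarz and Lemma~\ref{lemma:aux_property}(ii) (or (iii) with $\pi(e)=0$) yields the first bound, and the pointwise estimate $\tilde{\kappa} \ge c H^{-2}$ for a bilinear partition of unity yields the second.
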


To achieve convergence, we must assess the disparity between global and local multiscale basis functions. Given the distinct construction methods of constraint version and relaxed version, separate estimations are necessary. The estimation of the distinction between global and multiscale basis functions relies on a lemma applicable to both versions of multiscale basis functions. For each coarse block $K$, we define $B$ as a bubble function with $B(\boldsymbol{x}) > 0$ for all $\boldsymbol{x} \in K$ and $B(\boldsymbol{x})=0$ for all $\boldsymbol{x} \in \partial K.$
We will take $B=\Pi_j \chi_j$ where the product is taken over all
vertices $j$ on the boundary of $K.$ Using the bubble function,
we define the constant 

$$
C_{\pi} = \sup_{K \in \mathcal{T}^H, \mu \in V_{\textnormal{aux}}}
\frac{\int_K \tilde{\kappa}\mu^2}{\int_K B\tilde{\kappa}\mu^2}.
$$

\begin{lemma} \label{lemma:vaux_v}
For all $v_{\textnormal{aux}} \in V_{\textnormal{aux}}$,
there exists a function $v \in V$ such that

$$
\pi(v)=v_{\textnormal{aux}}, \quad
\Vert v \Vert_a^2 \le D_{\mathrm{supp}(v)} \Vert v_{\textnormal{aux}} \Vert_s^2, \quad
\mathrm{supp} (v) \subset \mathrm{supp}(v_{\textnormal{aux}}),
$$
where $D_{\mathrm{supp}(v)}=C_{\mathcal{T}}(1+\Gamma_{\mathrm{supp}(v)})$,
and $C_{\mathcal{T}}$ is the square of the maximum number of 
vertices over all coarse elements.
\end{lemma}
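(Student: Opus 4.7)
The plan is to build the desired preimage $v$ one coarse block at a time and then sum. Decompose the global auxiliary function as $v_{\textnormal{aux}} = \sum_i v_{\textnormal{aux}}^i$ with $v_{\textnormal{aux}}^i \in V_{\textnormal{aux}}^i$, and for each block $K_i$ appearing in $\mathrm{supp}(v_{\textnormal{aux}})$ produce a local piece $v^i \in V_0(K_i)$ (vanishing on $\partial K_i$ except possibly on $\partial \mathcal{B}^\epsilon$) satisfying $\pi(v^i) = v_{\textnormal{aux}}^i$. Setting $v = \sum_i v^i$ gives $\mathrm{supp}(v)\subset\mathrm{supp}(v_{\textnormal{aux}})$ by construction, and since the $K_i$ are disjoint, the energies add so the global norm estimate reduces to matching local ones of the form $\Vert v^i\Vert_{a(K_i)}^2 \le C_{\mathcal{T}}(1+\lambda_{l_i}^i)\Vert v_{\textnormal{aux}}^i\Vert_{s(K_i)}^2$.

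The key construction is to seek $v^i$ in the ansatz $v^i = B w$ with $w\in V_{\textnormal{aux}}^i$, where $B=\prod_j\chi_j$ is the block bubble function. Writing $w=\sum_k\alpha_k\phi_k^i$ and $v_{\textnormal{aux}}^i=\sum_k\beta_k\phi_k^i$, the constraint $\pi(Bw) = v_{\textnormal{aux}}^i$ becomes a finite linear system $M^i\alpha = \beta$ with entries $M^i_{kj}=\int_{K_i}B\tilde{\kappa}\phi_j^i\phi_k^i$. The definition of $C_\pi$ together with the $s_i$-orthonormality of the $\phi_k^i$ yields the coercivity bound $\alpha^T M^i\alpha = \int_{K_i}B\tilde{\kappa}w^2 \ge C_\pi^{-1}\Vert w\Vert_{s(K_i)}^2 = C_\pi^{-1}\Vert\alpha\Vert^2$, so $M^i$ is invertible and $\Vert w\Vert_{s(K_i)} \lesssim \Vert v_{\textnormal{aux}}^i\Vert_{s(K_i)}$. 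This produces a solvable local problem and identifies $w$ uniquely from $v_{\textnormal{aux}}^i$.

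The main obstacle is the energy estimate for $v^i=Bw$. Expand
\[
\Vert Bw\Vert_{a(K_i)}^2 \le 2\int_{K_i}B^2|\nabla w|^2 + 2\int_{K_i}w^2|\nabla B|^2
\le 2\Vert w\Vert_{a(K_i)}^2 + 2\int_{K_i}w^2|\nabla B|^2,
\]
and control $|\nabla B|^2$ by the partition-of-unity gradients: since $B$ is a product over at most $C_{\mathcal{T}}$ vertex hats each bounded by $1$, we have $|\nabla B|^2 \le C_{\mathcal{T}}\sum_j|\nabla\chi_j|^2 \le C_{\mathcal{T}}\tilde{\kappa}$ on $K_i$, whence the second term is bounded by $C_{\mathcal{T}}\Vert w\Vert_{s(K_i)}^2$. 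Since $w\in V_{\textnormal{aux}}^i$, part (i) of Lemma~\ref{lemma:aux_property} gives $\Vert w\Vert_{a(K_i)}^2 \le \lambda_{l_i}^i\Vert w\Vert_{s(K_i)}^2$, so combining yields
\[
\Vert v^i\Vert_{a(K_i)}^2 \le C_{\mathcal{T}}\bigl(1+\lambda_{l_i}^i\bigr)\Vert w\Vert_{s(K_i)}^2 \le C_{\mathcal{T}}\bigl(1+\lambda_{l_i}^i\bigr)\Vert v_{\textnormal{aux}}^i\Vert_{s(K_i)}^2,
\]
up to constants absorbed into $C_{\mathcal{T}}$.

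Finally, summing over the blocks intersecting $\mathrm{supp}(v_{\textnormal{aux}})$ replaces $\lambda_{l_i}^i$ by its maximum $\Gamma_{\mathrm{supp}(v)}$ and gives $\Vert v\Vert_a^2 \le C_{\mathcal{T}}(1+\Gamma_{\mathrm{supp}(v)})\Vert v_{\textnormal{aux}}\Vert_s^2$, which is the claimed bound with $D_{\mathrm{supp}(v)}=C_{\mathcal{T}}(1+\Gamma_{\mathrm{supp}(v)})$. The support and projection properties are immediate from the block-local construction. I expect the delicate point to be tracking the constants carefully enough to obtain the precise form $C_{\mathcal{T}}(1+\Gamma)$ rather than a looser mixture involving $C_\pi$; this is why the bubble ansatz is chosen so that the $C_\pi$ factor only appears in the stability of the local linear system and can be absorbed into the geometric constant $C_{\mathcal{T}}$ when the normalization of $B$ is exploited.
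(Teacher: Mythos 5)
Your proof is correct, and it reaches the paper's estimate by the same core mechanism --- the bubble function $B$, the coercivity $\int_K B\tilde{\kappa}\mu^2 \ge C_\pi^{-1}\Vert\mu\Vert_{s(K)}^2$, the gradient bound $|\nabla B|^2 \le C_{\mathcal{T}}\sum_j|\nabla\chi_j|^2$, and Lemma~\ref{lemma:aux_property}~(i) to trade $\Vert\cdot\Vert_{a(K_i)}$ for $\lambda_{l_i}^i\Vert\cdot\Vert_{s(K_i)}$ --- but it executes the existence step differently. The paper poses the constrained minimization \eqref{eq:variation_Ki_p} as a saddle-point problem and uses $Bv_{\textnormal{aux}}$ (which is \emph{not} feasible for the constraint) only as a test candidate to verify an inf-sup condition, then invokes abstract well-posedness; the resulting $v$ is the energy-minimal feasible function. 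You instead build a genuinely feasible function $Bw$ by inverting the Gram matrix $M^i_{kj}=\int_{K_i}B\tilde{\kappa}\phi_j^i\phi_k^i$, whose positive-definiteness ($M^i\succeq C_\pi^{-1}I$ via the $s_i$-orthonormality of the $\phi_k^i$) is exactly the same coercivity computation. Your route is more elementary and fully explicit, and it additionally upper-bounds the paper's minimizer since $Bw$ is feasible; the paper's route keeps the final $v$ as the minimizer, which is the form actually reused later (e.g.\ in Lemma~\ref{lemma:glolocal_c}). On the constant: your bound carries an extra factor $C_\pi^2$ from $\Vert w\Vert_{s(K_i)}\le C_\pi\Vert v_{\textnormal{aux}}^i\Vert_{s(K_i)}$, which is not literally absorbed into the purely combinatorial $C_{\mathcal{T}}$ as you suggest; but the paper's own inf-sup argument incurs the same $C_\pi$ dependence (the stability constant of the saddle-point problem is $C_2/C_1=C_\pi C_2$), so neither proof attains the stated $D=C_{\mathcal{T}}(1+\Gamma)$ without tacitly suppressing $C_\pi$. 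This is a shared imprecision, not a gap in your argument.
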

\begin{proof} 
Without loss of generality, we can assume that $v_{\textnormal{aux}} \in V_{\textnormal{aux}}^i.$
Consider the following variational problem: find $v \in V(K_i)$ and 
$\mu \in V_{\textnormal{aux}}^i$ such that

\begin{equation}
\begin{cases}
\begin{aligned}
a_i(v, w) + s_i(w, \mu) &= 0, \quad \forall w\in V(K_{i}), \\
s_i(v-v_{\textnormal{aux}}, q) &= 0, \quad 
\forall q\in V_{\textnormal{aux}}^i.
\end{aligned}
\end{cases}
\label{eq:variation_Ki_p}
\end{equation}
Note that, the well-posedness of the problem \eqref{eq:variation_Ki_p} is equivalent to the existence of a function $v \in V(K_i)$ such that 

$$
s_i(v, v_{\textnormal{aux}}) \ge 
C_1 \Vert v_{\textnormal{aux}} \Vert_{s(K_i)}^2, \quad
\Vert v \Vert_{a(K_i)} \le C_2 \Vert v_{\textnormal{aux}} \Vert_{s(K_i)}
$$
where $C_1$ and $C_2$ are the constants to be determined.

Note that $v_{\textnormal{aux}}$ is supported in $K_i.$
We let $v=B v_{\textnormal{aux}}.$ By the definition of $s_i$,
we have 

$$
s_i(v, v_{\textnormal{aux}}) = \int_{K_i}\tilde{\kappa} 
B v_{\textnormal{aux}}^2 \ge C_{\pi}^{-1} 
\Vert v_{\textnormal{aux}} \Vert_{s(K_i)}^2.
$$
Since $\nabla(B v_{\textnormal{aux}}) = v_{\textnormal{aux}} \nabla B + 
B \nabla v_{\textnormal{aux}}, |B| \le 1$ and 
$|\nabla B|^2 \le C_{\mathcal{T}} \sum_j |\nabla \chi_j|^2$,
we have

$$
\Vert v \Vert_{a(K_i)}^2 = 
a_i(v, B v_{\textnormal{aux}}) \le 
C_{\mathcal{T}} \Vert v \Vert_{a(K_i)}
\left(
\Vert v_{\textnormal{aux}} \Vert_{a(K_i)} + 
\Vert v_{\textnormal{aux}} \Vert_{s(K_i)}
\right).
$$
Combining \cref{lemma:aux_property} (i), the existence and uniqueness of the function $v$ can be deduced for a given auxiliary function $v_{\textnormal{aux}} \in V_{\textnormal{aux}}^i.$ It is evident from the second equality in \eqref{eq:variation_Ki_p} that $\pi(v) = v_{\textnormal{aux}}.$ The remaining two conditions in the lemma naturally follow from the preceding proof. It is important to note that the constant $D$ depends on the eigenvalues that in support of $v_{\textnormal{aux}}.$
\end{proof}

Next, we will separately establish the remaining components of the convergence analysis for both the constraint version and the relaxed version.
That is, our goal is to estimate the difference of multiscale basis functions and global multiscale basis functions.

\subsection{Constraint Version}
Before we start analyze the constraint version, we need to estimate the difference of multiscale basis functions and global multiscale basis functions are dependent of the oversampling layers. The following lemma has prove the exponential decay property. 

\begin{lemma} \label{lemma:glolocal_c}
We consider the oversampled domain $K_{i,k_i}$ with $k_i\ge 2.$
That is, $K_{i,k_i}$ is an oversampled region by enlarging
$K_i$ by $k_i$ coarse grid layers.
Let $\phi_j^i \in V_{\textnormal{aux}}$ be a given auxiliary 
multiscale basis function. We let $\psi_{j,\textnormal{ms}}^i$ 
be the multiscale basis functions obtained in \eqref{eq:c_mini}
and let $\psi_j^i$ be the global multiscale basis functions 
obtained in \eqref{eq:c_mini_global}. Then we have

$$
\Vert \psi_j^i - \psi_{j,\textnormal{ms}}^i \Vert_a^2 \le 
E_i \Vert \phi_j^i \Vert_{s(K_i)}^2
$$
where $E_i=8D_{K_{i,k_i}}^2(1+\Lambda_{\Omega^{\epsilon} \setminus K_{i,k_i}}^{-1})
\left(
1 + \frac{\Lambda_{K_{i,k_i-1}}^{\frac{1}{2}}}{2D_{K_{i,k_i-1}}^{\frac{1}{2}}}
\right)^{1-k_i}.$
\end{lemma}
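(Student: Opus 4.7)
The plan is to control $\eta := \psi_j^i - \psi_{j,\textnormal{ms}}^i$ by a Caccioppoli-type single-shell estimate iterated across the $k_i - 1$ nested shells between $K_i$ and $K_{i,k_i}$. The first observation is the orthogonality structure of $\eta$: both $\psi_j^i$ and $\psi_{j,\textnormal{ms}}^i$ have the same global $\pi$-image (a scalar multiple of $\phi_j^i$ supported in $K_i \subset K_{i,k_i}$, since the oversampled basis is supported in $K_{i,k_i}$ and matches the $\phi_j^i$-orthogonality constraint there), so $\pi(\eta) = 0$ globally, i.e.\ $\eta \in \tilde V$. Subtracting the saddle-point systems \eqref{eq:c_variation} and \eqref{eq:c_variation_global} and noting that the Lagrange-multiplier term $s(v, w_{\textnormal{glo}} - w_{\textnormal{ms}})$ vanishes on $\tilde V$ then yields $a(\eta, v) = 0$ for every $v \in V_0(K_{i,k_i}) \cap \tilde V$.

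Next I would build an admissible test function using the cutoff $\chi := \chi_i^{k_i, k_i-1}$. Since $\chi \equiv 1$ on $K_{i, k_i-1}$ and $\pi(\eta) = 0$, the projection $\pi(\chi\eta)$ is supported in the shell $K_{i,k_i}\setminus K_{i,k_i-1}$. Applying \cref{lemma:vaux_v} to $\pi(\chi\eta)$ produces a correction $\xi \in V$ supported in the same shell with $\pi(\xi) = -\pi(\chi\eta)$ and $\|\xi\|_a^2 \le D_{K_{i,k_i}}\|\pi(\chi\eta)\|_s^2$, so $\chi\eta + \xi \in V_0(K_{i,k_i}) \cap \tilde V$ and $a(\eta, \chi\eta + \xi) = 0$. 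Combining this with the decomposition $\|\eta\|_a^2 = a(\eta, \chi\eta) + a(\eta, (1-\chi)\eta)$, Cauchy--Schwarz, \cref{lemma:aux_property}(iv) on $(1-\chi)\eta$ (using $\pi(\eta)=0$), and \cref{lemma:aux_property}(ii) to convert $\|\pi(\chi\eta)\|_s^2 \le \|\eta\|_{s(\text{shell})}^2 \le \Lambda^{-1}\|\eta\|_{a(\text{shell})}^2$ yields a single-shell Caccioppoli inequality of the form
$$
\|\eta\|_a^2 \le C\,D_{K_{i,k_i}}\bigl(1 + \Lambda_{\Omega^{\epsilon} \setminus K_{i,k_i-1}}^{-1}\bigr)\, \|\eta\|_{a(\Omega^{\epsilon}\setminus K_{i,k_i-1})}^2.
$$

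To upgrade this to the geometric decay $(1+\Lambda^{1/2}/(2D^{1/2}))^{1-k_i}$ appearing in $E_i$, I would iterate the same construction across the nested shells with cutoffs $\chi_i^{s+1,s}$ for $s = 1, \ldots, k_i - 1$: each test function $\chi_i^{s+1,s}\eta + \xi^{(s)}$ still lies in $V_0(K_{i,k_i}) \cap \tilde V$, so the variational orthogonality continues to apply. Balancing $\|\xi^{(s)}\|_a^2 \le D_{K_{i,s+1}}\Lambda^{-1}\|\eta\|_{a(\text{shell}_s)}^2$ against the annular energy and rearranging via $\|\eta\|_{a(\Omega^{\epsilon}\setminus K_{i,s-1})}^2 = \|\eta\|_{a(\text{shell}_s)}^2 + \|\eta\|_{a(\Omega^{\epsilon}\setminus K_{i,s})}^2$ produces a per-shell contraction by $(1 + \Lambda^{1/2}/(2D^{1/2}))^{-1}$; telescoping over $k_i - 1$ shells yields the claimed exponent. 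Finally, to replace the initial quantity by $\|\phi_j^i\|_{s(K_i)}^2$, I would insert the competitor from \cref{lemma:vaux_v} into the global variational problem \eqref{eq:c_variation_global}, obtaining $\|\psi_j^i\|_a^2 \le D_{K_i}\|\phi_j^i\|_{s(K_i)}^2$ (with the analogous bound for $\psi_{j,\textnormal{ms}}^i$ following from energy minimality), which supplies the remaining $D$ factor in the prefactor of $E_i$.

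The main obstacle is the per-shell contraction step. The one-shell Caccioppoli produces only an additive constant depending on $\Lambda$ and $D$; converting it into a strict contraction factor uniformly less than $1$ requires a careful balance between the correction term's energy (from \cref{lemma:vaux_v}) and the shell energy on the same annulus. It is precisely this balance that yields the square-root ratio $\Lambda^{1/2}/D^{1/2}$ rather than the naive $\Lambda/D$, producing the base $1 + \Lambda^{1/2}/(2D^{1/2})$ in the decay constant of $E_i$.
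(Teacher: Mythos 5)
Your opening moves match the paper's: the orthogonality $a(\psi_j^i-\psi_{j,\textnormal{ms}}^i,v)=0$ for $v\in V_0(K_{i,k_i})\cap\tilde V$, the corrected cutoff test function $\chi\eta+\xi$ built from \cref{lemma:vaux_v}, and the final normalization via $\Vert\tilde{\phi}_j^i\Vert_a^2\le D_{K_i}\Vert\phi_j^i\Vert_{s(K_i)}^2$ are all present. The gap is in the iteration. You set $\eta:=\psi_j^i-\psi_{j,\textnormal{ms}}^i$ and claim a per-shell contraction of the \emph{outer} energies, $\Vert\eta\Vert_{a(\Omega^{\epsilon}\setminus K_{i,s})}^2\le\theta\,\Vert\eta\Vert_{a(\Omega^{\epsilon}\setminus K_{i,s-1})}^2$, using test functions $\chi_i^{s+1,s}\eta+\xi^{(s)}\in V_0(K_{i,k_i})\cap\tilde V$. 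But such a test function is supported \emph{inside} $K_{i,s+1}$; pairing $\eta$ against it controls the inner energy $\Vert\eta\Vert_{a(K_{i,s})}^2$ by a shell term, not the outer energy. To contract the outer energy you would have to test against something like $(1-\chi_i^{s,s-1})^2\eta$ minus its $\pi$-correction, which is supported in $\Omega^{\epsilon}\setminus K_{i,s-1}$ and hence is \emph{not} in $V_0(K_{i,k_i})$, so the only orthogonality your $\eta$ satisfies does not apply to it. Worse, outside $K_{i,k_i}$ your $\eta$ coincides with $\psi_j^i$, which is not small a priori, so no identity localized to $K_{i,k_i}$ can force its far field to decay; chaining inner-energy bounds only shows the energy of the difference concentrates near $\partial K_{i,k_i}$, which does not yield $\Vert\eta\Vert_a^2\le E_i\Vert\phi_j^i\Vert_{s(K_i)}^2$.

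The paper avoids this by iterating on a different object: it sets $\eta:=\psi_j^i-\tilde{\phi}_j^i$, where $\tilde{\phi}_j^i$ is the compactly supported representative from \cref{lemma:vaux_v} with $\pi(\tilde{\phi}_j^i)=\phi_j^i$ and $\mathrm{supp}(\tilde{\phi}_j^i)\subset K_i$. For this $\eta$ the test function $\xi^2\eta-\gamma$, supported in $\Omega^{\epsilon}\setminus K_{i,k_i-2}$, is legitimate because one invokes the \emph{global} orthogonality $a(\psi_j^i,w)=0$ for all $w\in\tilde V=V_{\textnormal{glo}}^{\perp}$, while $a(\tilde{\phi}_j^i,\xi^2\eta-\gamma)=0$ by disjointness of supports. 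The exponential contraction of the outer energies is driven entirely by this global orthogonality; the local orthogonality between $\psi_j^i$ and $\psi_{j,\textnormal{ms}}^i$ is used only once, at the start, to reduce $\Vert\psi_j^i-\psi_{j,\textnormal{ms}}^i\Vert_a$ to $\Vert\eta\Vert_{a(\Omega^{\epsilon}\setminus K_{i,k_i-1})}$ — which is essentially your single-shell Caccioppoli step. To complete your argument you need to switch objects before iterating.
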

\begin{proof}
For the given $\phi_j^i \in V_{\textnormal{aux}}$, 
using \cref{lemma:vaux_v}, there exists a $\tilde{\phi}_j^i \in V$ such that

\begin{equation}
\pi(\tilde{\phi}_j^i) = \phi_j^i, \quad
\Vert \tilde{\phi}_j^i \Vert_a^2 \le D_{K_i} 
\Vert \phi_j^i \Vert_{s(K_i)}^2, 
\textnormal{ and } \quad
\mathrm{supp}(\tilde{\phi}_j^i) \subset K_i.
\label{eq:def_tildephi}
\end{equation}
We let $\eta = \psi_j^i - \tilde{\phi}_j^i.$
Note that $\eta \in \tilde{V}$ since $\pi(\eta)=0.$
By using the resulting variational forms of minimization problems
\eqref{eq:c_mini} and \eqref{eq:c_mini_global}, we see that
$\psi_{j,\textnormal{ms}}^i$ and $\psi_j^i$ satisfy 

\begin{equation}
a(\psi_{j,\textnormal{ms}}^i, v) + 
s(v, \mu_{j,\textnormal{ms}}^i) = 0, 
\quad \forall v \in V_0(K_{i,k_i})
\label{eq:c_weak_lem_decay}
\end{equation}
where $V_0(K_{i,k_i}) = \{v \in H^1(K_{i,k_i}) ~|~
v=0 \textnormal{ in } \partial K_{i,k_i} / (\partial K_{i,k_i} 
\cap \partial \mathcal{B}^{\epsilon}) \}$, and 

\begin{equation}
a(\psi_j^i, v) + s(v, \mu_j^i) = 0, \quad
\forall v \in V
\label{eq:c_weak_lem_glo_decay}
\end{equation}
for some $\mu_{j,\textnormal{ms}}^i, \mu_j^i \in V_{\textnormal{aux}}.$
Define a null space in $K_{i,k_i}$, 

$$
\tilde{V}_0(K_{i,k_i}) = \{v \in V_0(K_{i,k_i}) ~|~ \pi(v)=0 \}.
$$
Subtracting the above two equations \eqref{eq:c_weak_lem_decay} and \eqref{eq:c_weak_lem_glo_decay}, and restricting 
$v \in \tilde{V}_0(K_{i,k_i})$, we have

$$
a(\psi_j^i-\psi_{j,\textnormal{ms}}^i, v) = 0, \quad
\forall v \in \tilde{V}_0(K_{i,k_i}).
$$
Therefore, for $v\in \tilde{V}_0(K_{i,k_i})$ and combine $(-\psi_{j,\textnormal{ms}}^i+\tilde{\phi}_j^i) \in \tilde{V}(K_{i,k_i})$, we have

$$
\begin{aligned}
\Vert \psi_j^i - \psi_{j,\textnormal{ms}}^i \Vert_a^2
&= a(\psi_j^i - \psi_{j,\textnormal{ms}}^i, 
\psi_j^i - \psi_{j,\textnormal{ms}}^i) \\
&= a(\psi_j^i - \psi_{j,\textnormal{ms}}^i, 
\psi_j^i - \tilde{\phi}_j^i 
- \psi_{j,\textnormal{ms}}^i + \tilde{\phi}_j^i) \\
&= a(\psi_j^i - \psi_{j,\textnormal{ms}}^i, \eta-v).
\end{aligned}
$$
Hence, we conclude

\begin{equation}
\Vert \psi_j^i - \psi_{j,\textnormal{ms}}^i \Vert_a \le 
\Vert \eta - v \Vert_a
\label{ineq:msglo_etav}
\end{equation}
for all $v \in \tilde{V}_0(K_{i,k_i}).$
For $i\textnormal{-th}$ coarse block $K_i$, we consider two oversampled regions $K_{i,k_i-1}$ and $K_{i,k_i}.$ 
We define the cutoff function $\chi_i^{k_i,k_i-1}$ with the properties in \eqref{eq:def_cutoff}, where $M_i=k_i$ and $m_i=k_i-1.$
It follows that $\chi_i^{k_i,k_i-1} \equiv 1$ for any $K_j \subset K_{i,k_i-1}.$ 
Since $\eta \in \tilde{V}$, we have

$$
s_j(\chi_i^{k_i,k_i-1} \eta, \phi_n^j) = 
s_j(\eta, \phi_n^j) = 0, \quad
\forall n=1,2,\cdots, l_j.
$$
From the above result and the fact that $\chi_i^{k_i,k_i-1} 
\equiv 0 \textnormal{ in } \Omega^{\epsilon} \setminus K_{i,k_i}$,
we have

$$
\mathrm{supp} 
\left( \pi(\chi_i^{k_i,k_i-1} \eta) \right) \subset
K_{i,k_i} \setminus K_{i,k_i-1}.
$$
Using \cref{lemma:vaux_v}, for the function 
$\pi(\chi_i^{k_i,k_i-1} \eta)$, there is $\mu \in V$ such that 
$\mathrm{supp}(\mu) \subset K_{i,k_i} \setminus K_{i,k_i-1}$ and
$\pi(\mu-\chi_i^{k_i,k_i-1}\eta)=0.$ Moreover, also from 
\cref{lemma:vaux_v},

\begin{equation}
\begin{aligned}
\Vert \mu \Vert_{a(K_{i,k_i}\setminus K_{i,k_i-1})} 
&\le D_{K_{i,k_i}\setminus K_{i,k_i-1}}^{\frac{1}{2}} \Vert \pi(\chi_i^{k_i,k_i-1}\eta) 
\Vert_{s(K_{i,k_i}\setminus K_{i,k_i-1})} \\
&\le D_{K_{i,k_i}\setminus K_{i,k_i-1}}^{\frac{1}{2}} \Vert \chi_i^{k_i,k_i-1}\eta 
\Vert_{s(K_{i,k_i}\setminus K_{i,k_i-1})}
\end{aligned}
\label{ineq:mu_chieta}
\end{equation}
where the last inequality follows from the fact that $\pi$ is a projection. 
Taking $v=\mu + \chi_i^{k_i,k_i-1}\eta$ in \eqref{ineq:msglo_etav}, we have

\begin{equation}
\Vert \psi_j^i - \psi_{j,\textnormal{ms}}^i \Vert_a \le
\Vert \eta - v \Vert_a \le \Vert (1-\chi_i^{k_i,k_i-1})\eta \Vert_a 
+ \Vert \mu \Vert_{a(K_{i,k_i}\setminus K_{i,k_i-1})}.
\label{ineq:msglo_etav_2}
\end{equation}
Next, we will use three steps to estimate the two terms on the right hand side. 

\paragraph{Step 1:}
We will prove the two terms on the right hand side in \eqref{ineq:msglo_etav_2} can be bounded by $\Vert \eta \Vert_{a(\Omega^{\epsilon} \setminus K_{i,k_i-1})}$. 
For the first term in \eqref{ineq:msglo_etav_2}, $\Vert (1-\chi_i^{k_i,k_i-1})\eta \Vert_a$. 
From \cref{lemma:aux_property} (iv) and $\eta \in \tilde{V}$, we have

$$
\Vert (1-\chi_i^{k_i,k_i-1})\eta \Vert_a^2 \le  2(1+\Lambda_{\Omega^{\epsilon} \setminus K_{i,k_i-1}}^{-1}) 
\Vert \eta \Vert_{a(\Omega^{\epsilon} \setminus K_{i,k_i-1})}^2.
$$
For the second term in \eqref{ineq:msglo_etav_2}, $\Vert \mu \Vert_{a(K_{i,k_i}\setminus K_{i,k_i-1})}.$ By \eqref{ineq:mu_chieta} and \cref{lemma:aux_property} (ii), we have

$$
\begin{aligned}
\Vert \mu \Vert_{a(K_{i,k_i}\setminus K_{i,k_i-1})}^2 &\le 
D_{K_{i,k_i}\setminus K_{i,k_i-1}} \Vert \chi_i^{k_i,k_i-1} \eta \Vert_{s(K_{i,k_i}\setminus K_{i,k_i-1})}^2 \\
&\le \frac{D_{K_{i,k_i}\setminus K_{i,k_i-1}}}{\Lambda_{K_{i,k_i}\setminus K_{i,k_i-1}}} \Vert \eta \Vert_{a(K_{i,k_i}\setminus K_{i,k_i-1})}^2.
\end{aligned}
$$
Thus, we obtain
\begin{equation}
\Vert \psi_j^i - \psi_{j,\textnormal{ms}}^i \Vert_a^2 \le 
2D_{K_{i,k_i}\setminus K_{i,k_i-1}}(1+\Lambda_{\Omega^{\epsilon}\setminus K_{i,k_i-1}}^{-1}) \Vert \eta 
\Vert_{a(\Omega^{\epsilon} \setminus K_{i,k_i-1})}^2.
\label{ineq:msglo_etak1}
\end{equation}

\paragraph{Step 2:}
We will prove the following recursive inequality,

\begin{equation}
\Vert \eta \Vert_{a(\Omega^{\epsilon} \setminus K_{i,k_i-1})}^2
\le 
\left(
1 + \frac{\Lambda_{K_{i,k_i-1}\setminus K_{i,k_i-2}}^{\frac{1}{2}}}{2D_{K_{i,k_i-1}\setminus K_{i,k_i-2}}^{\frac{1}{2}}}
\right)^{-1} \Vert \eta 
\Vert_{a(\Omega^{\epsilon} \setminus K_{i,k_i-2})}^2.
\label{ineq:recu_k1k2}
\end{equation}
where $k_i-2\ge 0.$ 
Let $\xi = 1-\chi_i^{k_i-1,k_i-2}.$
Then we see that $\xi \equiv 1$ in 
$\Omega^{\epsilon} \setminus K_{i,k_i-1}$ and $0 \le \xi \le 1$
otherwise. Then we have

\begin{equation}
\Vert \eta \Vert_{a(\Omega^{\epsilon} \setminus K_{i,k_i-1})}^2 \le
\int_{\Omega^{\epsilon}} \xi^2 |\nabla \eta|^2 = 
\int_{\Omega^{\epsilon}} \nabla \eta \cdot \nabla (\xi^2 \eta) 
- 2 \int_{\Omega^{\epsilon}} \xi \eta \nabla \xi \cdot \eta.
\label{eq:eta_xieta}
\end{equation}
We estimate the first term in \eqref{eq:eta_xieta}. 
For the function $\pi(\xi^2\eta)$, using \cref{lemma:vaux_v}, 
there exist $\gamma \in V$ such that 
$\pi(\gamma)=\pi(\xi^2\eta)$ and 
$\mathrm{supp}(\gamma) \subset \mathrm{supp}(\pi(\xi^2\eta)).$
For any coarse element $K_m \subset \Omega^{\epsilon} 
\setminus K_{i,k_i-1}$, since $\xi \equiv 1$ on $K_m$, we have 

$$
s_m(\xi^2\eta, \phi_n^m) = 0, \quad \forall n = 1, 2, \cdots, l_m.
$$
On the other hand, since $\xi \equiv 0$ in $K_{i,k_i-2}.$
For any coarse element $K_m \subset K_{i,k_i-2}$, we have

$$
s_m(\xi^2\eta, \phi_n^m) = 0, \quad \forall n = 1, 2, \cdots, l_m.
$$
From the above two conditions, we see that 
$\mathrm{supp}(\pi(\xi^2 \eta)) \subset K_{i,k_i-1}\setminus 
K_{i,k_i-2}$, and consequently 
$\mathrm{supp}(\pi(\gamma)) \subset K_{i,k_i-1}\setminus K_{i,k_i-2}.$
Note that, since $\pi(\gamma)=\pi(\xi^2\eta)$, we have
$\xi^2\eta - \gamma \in \tilde{V}.$ We note also that 
$\mathrm{supp}(\xi^2\eta - \gamma) \subset \Omega^{\epsilon} 
\setminus K_{i,k_i-2}.$ By \eqref{eq:def_tildephi}, the functions
$\tilde{\phi}_j^i$ and $\xi^2\eta - \gamma$ have disjoint 
supports, so $a(\tilde{\phi}_j^i, \xi^2\eta-\gamma)=0.$
Then, by the definition of $\eta$, we have

$$
a(\eta, \xi^2\eta-\gamma) = 
a(\psi_j^i, \xi^2\eta-\gamma).
$$
Since $\xi^2\eta - \gamma \in \tilde{V} = V_{\textnormal{glo}}^{\bot}$, we have
$a(\psi_j^i, \xi^2\eta-\gamma)=0.$ 
Then we can estimate the first term in \eqref{eq:eta_xieta}
as follows

$$
\begin{aligned}
\int_{\Omega^{\epsilon}} \nabla \eta \cdot \nabla (\xi^2 \eta)
&= \int_{\Omega^{\epsilon}} \nabla \eta \cdot \nabla \gamma \\
&\le D_{K_{i,k_i-1}\setminus K_{i,k_i-2}}^{\frac{1}{2}} 
\Vert \eta \Vert_{a(K_{i,k_i-1}\setminus K_{i,k_i-2})}
\Vert \pi(\xi^2\eta) \Vert_{s(K_{i,k_i-1}\setminus K_{i,k_i-2})}
\end{aligned}
$$
where the last inequality follows from \cref{lemma:vaux_v}.
For all coarse elements $K_j \subset K_{i,k_i-1} \setminus K_{i,k_i-2}$,
since $\pi(\eta)=0$, combine \cref{lemma:aux_property} (ii), we have

$$
\Vert \pi(\xi^2\eta) \Vert_{s(K_j)}^2 \le 
\Vert \xi^2 \eta \Vert_{s(K_j)}^2 \le 
\Lambda_{K_j}^{-1} \Vert \eta \Vert_{a(K_j)}^2.
$$
Summing the above over all coarse elements 
$K_j \subset K_{i,k_i-1} \setminus K_{i,k_i-2}$, we have

$$
\Vert \pi(\xi^2\eta) \Vert_{s(K_{i,k_i-1}\setminus K_{i,k_i-2})} \le
 \Lambda_{K_{i,k_i-1}\setminus K_{i,k_i-2}}^{-\frac{1}{2}}
\Vert \eta \Vert_{a(K_{i,k_i-1}\setminus K_{i,k_i-2})}.
$$
To estimate the second term in \eqref{eq:eta_xieta}, 
using triangle inequality and \cref{lemma:aux_property} (ii),

$$
\begin{aligned}
2 \int_{\Omega^{\epsilon}} \xi \eta \nabla\xi \cdot \nabla\eta 
&\le 2 \Vert \eta \Vert_{s(K_{i,k_i-1} \setminus K_{i,k_i-2})}
\Vert \eta \Vert_{a(K_{i,k_i-1} \setminus K_{i,k_i-2})} \\
&\le \Lambda_{K_{i,k_i-1}\setminus K_{i,k_i-2}}^{-\frac{1}{2}}
\Vert \eta \Vert_{a(K_{i,k_i-1} \setminus K_{i,k_i-2})}^2.
\end{aligned}
$$
Hence, by using the above results, \eqref{eq:eta_xieta} can be 
estimated as 

$$
\Vert \eta \Vert_{a(\Omega^{\epsilon} \setminus K_{i,k_i-1})}^2 \le
\frac{2D_{K_{i,k_i-1}\setminus K_{i,k_i-2}}^{\frac{1}{2}}}{\Lambda_{K_{i,k_i-1}\setminus K_{i,k_i-2}}^{\frac{1}{2}}} 
\Vert \eta \Vert_{a(K_{i,k_i-1} \setminus K_{i,k_i-2})}^2.
$$
By using the above inequality, we have

$$
\begin{aligned}
\Vert \eta \Vert_{a(\Omega^{\epsilon} \setminus K_{i,k_i-2})}^2 &=
\Vert \eta \Vert_{a(\Omega^{\epsilon} \setminus K_{i,k_i-1})}^2 + 
\Vert \eta \Vert_{a(K_{i,k_i-1}) \setminus K_{i,k_i-2})}^2 \\
&\ge \left(
1 + \frac{\Lambda_{K_{i,k_i-1}\setminus K_{i,k_i-2}}^{\frac{1}{2}}}{2D_{K_{i,k_i-1}\setminus K_{i,k_i-2}}^{\frac{1}{2}}} \right)
\Vert \eta \Vert_{a(\Omega^{\epsilon} \setminus K_{i,k_i-1})}^2
\end{aligned}
$$

\paragraph{Step 3:}
Finally, we will estimate the term 
$\Vert \eta \Vert_{a(\Omega^{\epsilon} \setminus K_{i,k_i-1})}.$
Using \eqref{ineq:msglo_etak1} and 
\eqref{ineq:recu_k1k2}, we conclude that

\begin{equation}
\begin{aligned}
&\Vert \psi_j^i - \psi_{j,\textnormal{ms}}^i \Vert_a^2 \\
\le&
2D_{K_{i,k_i}\setminus K_{i,k_i-1}}(1+\Lambda_{\Omega^{\epsilon}\setminus K_{i,k_i-1}}^{-1}) 
\left(
1 + \frac{\Lambda_{K_{i,k_i-1}}^{\frac{1}{2}}}{2D_{K_{i,k_i-1}}^{\frac{1}{2}}}
\right)^{-1} 
\Vert \eta \Vert_{a(\Omega^{\epsilon} \setminus K_{i,k_i-2})}^2 \\
\le&
2D_{K_{i,k_i}\setminus K_{i,k_i-1}}(1+\Lambda_{\Omega^{\epsilon}\setminus K_{i,k_i-1}}^{-1}) 
\left(
1 + \frac{\Lambda_{K_{i,k_i-1}}^{\frac{1}{2}}}{2D_{K_{i,k_i-1}}^{\frac{1}{2}}}
\right)^{1-k_i} 
\Vert \eta \Vert_{a}^2.    
\end{aligned}
\end{equation}
By \eqref{eq:def_tildephi} and the definition of $\psi_j^i$, we have 

$$
\Vert \eta \Vert_a = 
\Vert \psi_j^i - \tilde{\phi}_j^i \Vert_a \le 
2 \Vert \tilde{\phi}_j^i \Vert_a \le 
2 D_{K_i}^{\frac{1}{2}} \Vert \phi_j^i \Vert_{s(K_i)}.
$$
This completes the proof.
\end{proof}

The above lemma shows the global basis is localizable. 
We will need one more result before we prove the final
convergence estimate, refer to \cite{chung2018constraint}.

\begin{lemma} \label{lemma:psi_gloms_reccu}
With the same notations in \cref{lemma:glolocal_c}, 
we have 

\begin{equation}
\Vert \sum_{j=1}^{l_i} (\psi_j^i - \psi_{j,\text{ms}}^i) 
\Vert_a^2 \le C  (k+1)^d 
\Vert \sum_{j=1}^{l_i} (\psi_j^i - \psi_{j,\text{ms}}^i) \Vert_a^2.
\label{eq:psi_gloms_recu}
\end{equation}
\end{lemma}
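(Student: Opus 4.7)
The statement as displayed has identical expressions on its two sides, which appears to be a typo; the intended inequality is the finite-overlap bound
$$\Bigl\Vert\sum_{i=1}^{N_c}\sum_{j=1}^{l_i}(\psi_j^i-\psi_{j,\textnormal{ms}}^i)\Bigr\Vert_a^2 \le C(k+1)^d \sum_{i=1}^{N_c} \Bigl\Vert\sum_{j=1}^{l_i}(\psi_j^i-\psi_{j,\textnormal{ms}}^i)\Bigr\Vert_a^2,$$
which is exactly the form required by the convergence estimate that follows.

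My plan is the standard coloring / finite-overlap argument. Writing $e_i := \sum_{j=1}^{l_i}(\psi_j^i-\psi_{j,\textnormal{ms}}^i)$, I would first expand
$$\Bigl\Vert\sum_i e_i\Bigr\Vert_a^2 = \sum_{i,i'} a(e_i,e_{i'}) \le \sum_{i,i'} \Vert e_i\Vert_a\,\Vert e_{i'}\Vert_a$$
by Cauchy--Schwarz, and then use quasi-uniformity of the coarse mesh to argue that for each fixed $i$, at most $C(k+1)^d$ indices $i'$ correspond to oversampled regions $K_{i',k}$ that intersect $K_{i,k}$, since each oversampling ball has width $O(kH)$ in each coordinate. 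Restricting the double sum to such neighboring indices and applying Young's inequality $ab \le \tfrac12 a^2 + \tfrac12 b^2$ produces the factor $(k+1)^d$ in front of $\sum_i \Vert e_i\Vert_a^2$.

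The main obstacle I anticipate is that $\psi_j^i$ is globally supported, so \emph{a priori} $a(e_i,e_{i'})$ need not vanish when $K_i$ and $K_{i'}$ are far apart, and a naive overlap count would fail. To close this gap I would invoke \cref{lemma:glolocal_c}: outside $K_{i,k_i}$, the difference $\psi_j^i-\psi_{j,\textnormal{ms}}^i$ coincides with $\psi_j^i$ and decays exponentially in the number of coarse layers away from $K_i$. This lets me replace each $e_i$ by its restriction to $K_{i,k}$ up to an error whose $a$-norm contribution is exponentially small in $k$, so the corresponding far-field contributions to the double sum form a summable geometric series that is absorbed into the constant $C$. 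With the effective supports localized to $K_{i,k}$, the overlap count is purely geometric, and the stated bound follows.
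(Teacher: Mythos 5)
You are right that the displayed inequality is a typo: as it is actually used in the proof of \cref{thm:convergence_c}, the intended estimate is $\Vert\sum_{i=1}^{N_c}\sum_{j=1}^{l_i}c_j^i(\psi_j^i-\psi_{j,\textnormal{ms}}^i)\Vert_a^2\le C(k+1)^d\sum_{i=1}^{N_c}\Vert\sum_{j=1}^{l_i}c_j^i(\psi_j^i-\psi_{j,\textnormal{ms}}^i)\Vert_a^2$, with the sum over $i$ outside the norm on the right (and with arbitrary coefficients $c_j^i$, which your reconstruction should carry since that is how the lemma is invoked). The paper offers no proof of this lemma at all --- it is quoted from \cite{chung2018constraint} --- so there is nothing to compare against line by line; your overlap count, namely that at most $C(k+1)^d$ indices $i'$ have $K_{i',k}$ meeting $K_{i,k}$, is indeed the standard source of the $(k+1)^d$ factor, and you correctly identified the real obstacle, the global support of $\psi_j^i$.

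The gap is in the last step, where you claim the far-field contributions are ``absorbed into the constant $C$.'' The decay you invoke is not the statement of \cref{lemma:glolocal_c} (which bounds only the total norm $\Vert\psi_j^i-\psi_{j,\textnormal{ms}}^i\Vert_a$) but the recursive inequality \eqref{ineq:recu_k1k2} inside its proof, and that inequality controls the annular tails of $\eta=\psi_j^i-\tilde{\phi}_j^i$ in terms of $\Vert\eta\Vert_a\le 2D_{K_i}^{1/2}\Vert\phi_j^i\Vert_{s(K_i)}$, \emph{not} in terms of $\Vert\psi_j^i-\psi_{j,\textnormal{ms}}^i\Vert_a$. Writing $e_i=\sum_j c_j^i(\psi_j^i-\psi_{j,\textnormal{ms}}^i)$ and running your splitting, a far cross term at coarse distance $r$ is bounded by something like $\theta^r(\Vert e_i\Vert_a^2+\Vert e_{i'}\Vert_a^2+\sum_j|c_j^i|^2\Vert\phi_j^i\Vert_{s(K_i)}^2+\sum_j|c_j^{i'}|^2\Vert\phi_j^{i'}\Vert_{s(K_{i'})}^2)$, so after summing the geometric series you obtain the desired bound \emph{plus} an additive remainder proportional to $\sum_{i,j}|c_j^i|^2\Vert\phi_j^i\Vert_{s(K_i)}^2$. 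Since \cref{lemma:glolocal_c} makes $\Vert e_i\Vert_a^2\le E_i\sum_j|c_j^i|^2\Vert\phi_j^i\Vert_{s(K_i)}^2$ with $E_i$ exponentially small, that remainder can dominate $\sum_i\Vert e_i\Vert_a^2$ and cannot be absorbed into a multiple of it; your argument therefore proves a weaker statement than the lemma. The weaker statement still suffices for \cref{thm:convergence_c}, because the very next step of that proof converts $\sum_i\Vert e_i\Vert_a^2$ into $E\Vert\pi(u_{\textnormal{glo}})\Vert_s^2$ anyway, but if you want the lemma as stated you must either add this remainder to its right-hand side or localize differently (e.g., via the cutoff identities of \cref{lemma:aux_property} and the local orthogonality $a(\psi_j^i-\psi_{j,\textnormal{ms}}^i,v)=0$ for $v\in\tilde{V}_0(K_{i,k_i})$) so that no appeal to the far-field decay of $\psi_j^i$ is needed.
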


Next, we will use the above lemma to estimate of the error betweem the solution $u$ and the constraint version multiscale solution $u_{\textnormal{ms}}$.

\begin{theorem} \label{thm:convergence_c}
Let $u$ be the solution of \eqref{eq:perforated_variation}  and 
$u_{\textnormal{ms}}$ be the solution of \eqref{eq:perforated_galerkin_ms}.
Define $E = \max_{1\le i \le N_c} E_i$. We have 

$$
\Vert u - u_{\textnormal{ms}} \Vert \le C \Lambda_{\Omega^{\epsilon}}^{-\frac{1}{2}}
\Vert \tilde{\kappa}^{-\frac{1}{2}} f \Vert + 
C (k+1)^{\frac{d}{2}} E^{\frac{1}{2}} \Vert u_{\textnormal{glo}} \Vert_s,
$$
where $u_{\textnormal{glo}}$ is the solution of 
\eqref{eq:perforated_galerkin_glo}.
Moreover, if each oversampling parameter $k_i$ is sufficiently large 
and $\{\chi_i\}_{i=1}^{N_v}$ is a set of bilinear 
partition of unity, we have 

$$
\Vert u - u_{\textnormal{ms}} \Vert_a \le 
C H \Lambda_{\Omega^{\epsilon}}^{-\frac{1}{2}} \Vert f \Vert.
$$
\end{theorem}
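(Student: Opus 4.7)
The plan is to split the error via the triangle inequality as
$\Vert u-u_{\textnormal{ms}} \Vert_a \le \Vert u-u_{\textnormal{glo}} \Vert_a + \Vert u_{\textnormal{glo}}-u_{\textnormal{ms}} \Vert_a$ and handle the two pieces separately. The first term is exactly what \cref{lemma:uuglo_esti} controls; in the bilinear partition of unity case this immediately gives the $CH\Lambda_{\Omega^{\epsilon}}^{-1/2}\Vert f \Vert$ contribution, while in the general case one gets the $C\Lambda_{\Omega^{\epsilon}}^{-1/2}\Vert \tilde{\kappa}^{-1/2} f\Vert$ term appearing in the statement.

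For the second piece, I would use Galerkin/Cea quasi-optimality: since $u_{\textnormal{ms}}$ is the $a$-projection of $u$ onto $V_{\textnormal{ms}}\subset V_{\textnormal{glo}}$ and $u_{\textnormal{glo}}$ is the $a$-projection onto $V_{\textnormal{glo}}$, we have $\Vert u_{\textnormal{glo}}-u_{\textnormal{ms}} \Vert_a \le \Vert u_{\textnormal{glo}}-w \Vert_a$ for any $w\in V_{\textnormal{ms}}$. Expand $u_{\textnormal{glo}} = \sum_{i,j} c_j^i \psi_j^i$ in the global basis and take as competitor $w = \sum_{i,j} c_j^i \psi_{j,\textnormal{ms}}^i \in V_{\textnormal{ms}}$, so that the difference becomes $\sum_i \sum_{j=1}^{l_i} c_j^i(\psi_j^i-\psi_{j,\textnormal{ms}}^i)$. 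Applying the finite-overlap bound of \cref{lemma:psi_gloms_reccu} to sum the per-block contributions with a combinatorial factor $(k+1)^d$, and then the decay estimate of \cref{lemma:glolocal_c} to each $\Vert \psi_j^i-\psi_{j,\textnormal{ms}}^i\Vert_a^2 \le E_i \Vert \phi_j^i\Vert_{s(K_i)}^2 \le E \Vert \phi_j^i\Vert_{s(K_i)}^2$, yields
\[
\Vert u_{\textnormal{glo}}-u_{\textnormal{ms}}\Vert_a^2 \le C(k+1)^d E \sum_{i,j} (c_j^i)^2 \Vert \phi_j^i \Vert_{s(K_i)}^2.
\]
Because the $\phi_j^i$ are $s$-orthonormal within each $K_i$ and across blocks, the right-hand sum equals $\Vert \pi(u_{\textnormal{glo}})\Vert_s^2 \le \Vert u_{\textnormal{glo}}\Vert_s^2$, giving the second summand in the first displayed inequality.

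For the second (sharp) estimate, I would argue that when every $k_i$ is taken sufficiently large, the factor $E_i = 8D_{K_{i,k_i}}^2(1+\Lambda_{\Omega^{\epsilon}\setminus K_{i,k_i}}^{-1})(1+\tfrac{1}{2}\Lambda^{1/2}/D^{1/2})^{1-k_i}$ from \cref{lemma:glolocal_c} decays exponentially and dominates the polynomial growth $(k+1)^d$, so the $(k+1)^{d/2}E^{1/2}\Vert u_{\textnormal{glo}}\Vert_s$ term can be made smaller than (or absorbed into) $CH\Lambda_{\Omega^{\epsilon}}^{-1/2}\Vert f\Vert$. I would bound $\Vert u_{\textnormal{glo}}\Vert_s$ by $\Vert u\Vert_s$ plus $\Vert u-u_{\textnormal{glo}}\Vert_s$ and then by $C\Vert f\Vert$ via the a priori bound for \eqref{eq:perforated_variation}, so the bilinear PoU version of \cref{lemma:uuglo_esti} supplies the first half and the decayed localization error supplies the rest.

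The main obstacle is the bookkeeping for the second estimate: making precise the statement that, in the bilinear PoU regime, one can choose $k = O(\log(1/H))$ so that $(k+1)^{d/2} E^{1/2} \Vert u_{\textnormal{glo}} \Vert_s \lesssim H \Lambda_{\Omega^{\epsilon}}^{-1/2}\Vert f\Vert$. This requires carefully tracking the dependence of $E$ on the local eigenvalues (through $\Lambda$ and $D$ on the shells $K_{i,k_i-1}\setminus K_{i,k_i-2}$) and verifying that $\Vert u_{\textnormal{glo}}\Vert_s$ is indeed controlled by $\Vert f\Vert$ independently of the contrast—this last step relies on a Poincaré-type inequality in the perforated domain combined with the definition of $\tilde{\kappa}$, and is where the geometry of the perforations really enters.
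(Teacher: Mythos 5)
Your overall strategy matches the paper's: quasi-optimality, the competitor $w=\sum_{i,j}c_j^i\psi_{j,\textnormal{ms}}^i$ with the coefficients of $u_{\textnormal{glo}}$, \cref{lemma:psi_gloms_reccu} for the $(k+1)^d$ overlap factor, \cref{lemma:glolocal_c} for the decay, the identity $\sum_{i,j}(c_j^i)^2\Vert\phi_j^i\Vert_{s(K_i)}^2=\Vert\pi(u_{\textnormal{glo}})\Vert_s^2\le\Vert u_{\textnormal{glo}}\Vert_s^2$, and \cref{lemma:uuglo_esti} for $\Vert u-u_{\textnormal{glo}}\Vert_a$. Two points deserve correction. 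First, your justification of the quasi-optimality step is wrong as stated: $V_{\textnormal{ms}}$ is \emph{not} a subspace of $V_{\textnormal{glo}}$ (the localized functions $\psi_{j,\textnormal{ms}}^i$ do not lie in the span of the global $\psi_j^i$), so $u_{\textnormal{ms}}$ is not an $a$-projection of $u_{\textnormal{glo}}$ and the inequality $\Vert u_{\textnormal{glo}}-u_{\textnormal{ms}}\Vert_a\le\Vert u_{\textnormal{glo}}-w\Vert_a$ does not follow from nested projections. The repair is what the paper does: apply Galerkin orthogonality of $u_{\textnormal{ms}}$ directly to $u$, giving $\Vert u-u_{\textnormal{ms}}\Vert_a\le\Vert u-w\Vert_a\le\Vert u-u_{\textnormal{glo}}\Vert_a+\Vert u_{\textnormal{glo}}-w\Vert_a$; the conclusion is the same and your subsequent estimates go through unchanged. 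Second, the part you flag as "the main obstacle" is precisely where your sketch stops and the paper's proof does the work: rather than bounding $\Vert u_{\textnormal{glo}}\Vert_s$ by $\Vert u\Vert_s$ plus a localization error, the paper tests \eqref{eq:perforated_galerkin_glo} with $u_{\textnormal{glo}}$ to get $\Vert u_{\textnormal{glo}}\Vert_a^2\le\Vert\tilde{\kappa}^{-1/2}f\Vert\,\Vert u_{\textnormal{glo}}\Vert_s$ and combines this with $\Vert u_{\textnormal{glo}}\Vert_s^2\le C\max\{\tilde{\kappa}\}\Vert u_{\textnormal{glo}}\Vert_a^2$ to obtain $\Vert u_{\textnormal{glo}}\Vert_s\le C\max\{\tilde{\kappa}\}\Vert\tilde{\kappa}^{-1/2}f\Vert$; since $\max\{\tilde{\kappa}\}=O(H^{-2})$ for a bilinear partition of unity, the requirement becomes $H^{-2}(k_i+1)^{d/2}E_i^{1/2}=O(1)$, which after taking logarithms yields the admissible choice $k_i=O\bigl(\log(H^{-1})/\log\bigl(1+\Lambda_{K_{i,k_i-1}}^{1/2}\Gamma_{K_{i,k_i-1}}^{-1/2}\bigr)\bigr)$. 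Your proposal correctly identifies all the ingredients but leaves this quantitative balancing, which is the substance of the second estimate, unexecuted.
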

\begin{proof}
We write $u_{\textnormal{glo}} = \sum_{i=1}^{N_c} \sum_{j=1}^{l_i} c_j^i \psi_j^i$. 
Subsequently, we express the solution utilizing identical coefficients, but employing local multiscale basis functions, given by $v = \sum_{i=1}^{N_c} \sum_{j=1}^{l_i} c_j^i \psi_{j,\textnormal{ms}}^i \in V_{\textnormal{ms}}$. 
So, by the Galerkin orthogonality, we have

$$
\Vert u-u_{\textnormal{ms}} \Vert_a \le \Vert u-v \Vert_a \le
\Vert u-u_{\textnormal{glo}} \Vert_a + 
\Vert \sum_{i=1}^{N_c} \sum_{j=1}^{l_i} c_j^i
(\psi_j^i-\psi_{j,\textnormal{ms}}^i) \Vert_a.
$$

Recall that the basis functions $\psi_{j,\textnormal{ms}}^i$ 
have supports in $K_{i,k_i}$. So, by \cref{lemma:glolocal_c} and 
\cref{lemma:psi_gloms_reccu},
\begin{align*}
\Vert \sum_{i=1}^{N_c} \sum_{j=1}^{l_i} c_j^i 
(\psi_j^i - \psi_{j,\textnormal{ms}}^i) \Vert_a^2 &\le
C (k+1)^d \sum_{i=1}^{N_c} \Vert \sum_{j=1}^{l_i}
c_j^i (\psi_j^i - \psi_{j,\textnormal{ms}}^i) \Vert_a^2 \\
&\le C (k+1)^d \sum_{i=1}^{N_c} E_i \Vert \sum_{j=1}^{l_i} 
c_j^i \phi_j^i \Vert_s^2 \\
&\le C (k+1)^d E \Vert u_{\textnormal{glo}} \Vert_s^2
\end{align*}
where the last inequality is because $\Vert \sum_{i=1}^{N_c} \sum_{j=1}^{l_i} c_j^i \phi_j^i \Vert_s^2 = \Vert \pi(u_{\textnormal{glo}}) \Vert_s^2$.
By using \cref{lemma:uuglo_esti}, we obtain

$$
\Vert u-u_{\textnormal{ms}} \Vert_a \le C \Lambda_{\Omega^{\epsilon}}^{-\frac{1}{2}}
\Vert \tilde{\kappa}^{-\frac{1}{2}} f \Vert + 
C(k+1)^{\frac{d}{2}} E^{\frac{1}{2}} \Vert u_{\textnormal{glo}}\Vert_s.
$$
This completes the proof for the first part of the theorem.

To proof the second inequality, we need to estimate the 
$s\textnormal{-norm}$ of the global solution $u_{\textnormal{glo}}.$
In particular, 

$$
\Vert u_{\textnormal{glo}} \Vert_s^2 \le \max\{\tilde{\kappa}\}
\Vert u_{\textnormal{glo}} \Vert^2 \le 
C \max\{\tilde{\kappa}\} \Vert u_{\textnormal{glo}} \Vert_a^2.
$$
Since $u_{\textnormal{glo}}$ satisfies 
\eqref{eq:perforated_galerkin_glo}, we have

$$
\Vert u_{\textnormal{glo}} \Vert_a^2 = \int_{\Omega^{\epsilon}} 
f u_{\textnormal{glo}} \le \Vert \tilde{\kappa}^{-\frac{1}{2}} f \Vert 
\Vert u_{\textnormal{glo}} \Vert_s.
$$
Therefore, we have 

$$
\Vert u_{\textnormal{glo}} \Vert_s \le C \max\{\tilde{\kappa}\} 
\Vert \tilde{\kappa}^{-\frac{1}{2}} f \Vert.
$$
To get the second inequality, we need make $C (k+1)^{\frac{d}{2}} E^{\frac{1}{2}}\max\{\tilde{\kappa}\}$ can be bounded.
That is

$$
H^{-2} C (k_i+1)^{\frac{d}{2}} E_i^{\frac{1}{2}} = O(1).
$$
Taking logarithm,

$$
\log(H^{-2}) + \frac{1-k_i}{2} \log\left(
1 + \frac{\Lambda_{K_{i,k_i-1}}^{\frac{1}{2}}}{2D_{K_{i,k_i-1}}^{\frac{1}{2}}} \right) + \frac{d}{2} \log(k_i+1) = O(1)
$$
That is if we take $k_i=O\left(\log(H^{-1})/\log\left(
1 + \Lambda_{K_{i,k_i-1}}^{\frac{1}{2}}\Gamma_{K_{i,k_i-1}}^{-\frac{1}{2}} \right)\right)$ and assume that $\{\chi_i\}_{i=1}^{N_v}$ is a set of bilinear partition of unity, 
then we have

$$
\Vert u-u_{\textnormal{ms}} \Vert_a \le CH \Lambda_{\Omega^{\epsilon}}^{-\frac{1}{2}} \Vert f \Vert.
$$
This completes the proof.
\end{proof}

\subsection{Relaxed Version}
In this subsection, we will prove the convergence when the multiscale basis functions is in relaxed version. 
Similar with constraint version, we will prove relaxed version of multiscale basis functions have a decay property first. 

The same as the constraint version (\cref{lemma:glolocal_c}), 
we also need to estimate the difference between local multiscale basis functions $\psi_{j,\textnormal{ms}}^i$ and global multiscale basis functions $\psi_{j}^i$.
We will prove the exponential decay property depends on the oversampling coarse layers $k_i$ and the eigenvalues ratio.

\begin{lemma} \label{lemma:glolocal_r}
Let $\phi_j^i \in V_{\textnormal{aux}}$ be a given auxiliary function.
Suppose that $\psi_{j,\textnormal{ms}}^i$ is a multiscale basis function obtained in \eqref{eq:r_mini} over the oversampling domain $K_{i,k_i}$ with $k_i\ge 2$ and $\psi_j^i$ is the corresponding global basis function obtained in \eqref{eq:r_mini_global}. Then, the following estimate holds:

$$
\Vert \psi_j^i - \psi_{j,\textnormal{ms}}^i \Vert_a^2 + 
\Vert \pi(\psi_j^i - \psi_{j,\textnormal{ms}}^i) \Vert_s^2 \le 
E_i \left( \Vert \psi_j^i \Vert_a^2 + \Vert \pi(\psi_j^i) \Vert_s^2 \right),
$$
where $E_i=3(1+\Lambda_{\Omega^{\epsilon}\setminus K_{i,k_i-1}}^{-1}) \left(1+(2(1+\Lambda_{K_{i,k_i-1}}^{-\frac{1}{2}}))^{-1}\right)^{1-k_i}$ 
is a factor of exponential decay.
\end{lemma}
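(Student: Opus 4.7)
The plan is to mimic the structure of the proof of \cref{lemma:glolocal_c}, but working with the composite norm $\Vert v\Vert_e^2 := \Vert v\Vert_a^2 + \Vert\pi(v)\Vert_s^2$ that is coercive for the bilinear form $\tilde a(u,v) := a(u,v) + s(\pi(u),\pi(v))$ underlying the relaxed formulation. Setting $\eta := \psi_j^i - \psi_{j,\textnormal{ms}}^i$ and subtracting \eqref{eq:r_variation} from \eqref{eq:r_variation_global}, the forcing $s(\phi_j^i,\pi(v))$ cancels and I obtain the Galerkin-type orthogonality $\tilde a(\eta,v)=0$ for all $v\in V_0(K_{i,k_i})$. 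Since $\psi_{j,\textnormal{ms}}^i\in V_0(K_{i,k_i})$, the Cauchy--Schwarz inequality for $\tilde a$ gives $\Vert\eta\Vert_e \le \Vert\psi_j^i - v\Vert_e$ for every such $v$, and the choice $v = \chi_i^{k_i,k_i-1}\psi_j^i$ (which lies in $V_0(K_{i,k_i})$ by the support property \eqref{eq:def_cutoff}) yields
$$\Vert\eta\Vert_e \le \Vert(1-\chi_i^{k_i,k_i-1})\psi_j^i\Vert_e.$$

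Next I would apply \cref{lemma:aux_property}(iv) and (v) directly to bound the right-hand side by $3(1+\Lambda_{\Omega^\epsilon\setminus K_{i,k_i-1}}^{-1})\,Q_{k_i-1}$, where
$$Q_l := \Vert\psi_j^i\Vert_{a(\Omega^\epsilon\setminus K_{i,l})}^2 + \Vert\pi(\psi_j^i)\Vert_{s(\Omega^\epsilon\setminus K_{i,l})}^2.$$
Note that, unlike the constraint version, no $D_\bullet$ factor appears at this stage because the energy $\Vert\cdot\Vert_e$ already accommodates $\pi(\psi_j^i)$, so \cref{lemma:vaux_v} is not needed here. The remaining task is to prove the geometric recursion
$$Q_l \le \bigl(1+(2(1+\Lambda_{K_{i,l-1}}^{-1/2}))^{-1}\bigr)^{-1} Q_{l-1},$$
and iterate from $l=k_i-1$ down to $l=1$ to obtain the stated $(1-k_i)$-power decay factor.

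To derive the recursion I would set $\xi = 1-\chi_i^{l,l-1}$, so $\xi\equiv 1$ on $\Omega^\epsilon\setminus K_{i,l}$ and $\xi\equiv 0$ on $K_{i,l-1}$, and test the global equation \eqref{eq:r_variation_global} against $v = \xi^2\psi_j^i\in V$. The decisive cancellation is that $\phi_j^i$ is supported in $K_i\subset K_{i,l-1}$ while $\pi(\xi^2\psi_j^i)$ vanishes on every coarse block inside $K_{i,l-1}$, so $s(\phi_j^i,\pi(\xi^2\psi_j^i))=0$ and the identity reduces to $a(\psi_j^i,\xi^2\psi_j^i) = -s(\pi(\psi_j^i),\pi(\xi^2\psi_j^i))$. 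Expanding $a(\psi_j^i,\xi^2\psi_j^i)$ by the product rule, bounding the $s$-pairing via the $s$-boundedness of $\pi$, and controlling the cross-gradient term with $|\nabla\xi|^2 \le \tilde\kappa$, everything reduces to quantities supported in the annulus $K_{i,l-1}\setminus K_{i,l-2}$; converting the annular $s$-masses of $\psi_j^i$ into $a$-masses via \cref{lemma:aux_property}(ii) and (iii) eventually produces $Q_l \le 2(1+\Lambda_{K_{i,l-1}}^{-1/2})(Q_{l-1}-Q_l)$, which rearranges to the required inequality.

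The main obstacle is precisely this recursion step. In the constraint version one exploits $\eta\in\tilde V$ to kill every $\pi$-term, whereas here $\pi(\psi_j^i)$ is nonzero on the annulus and I must carry both ingredients of $Q_l$ at every layer. The argument rests on the support-disjointness cancellation of $s(\phi_j^i,\pi(\xi^2\psi_j^i))$---which is precisely why $k_i\ge 2$ and two coarse layers must be peeled---and on eliminating annular $s$-masses of $\psi_j^i$ in favor of $Q_{l-1}-Q_l$ without paying any factor of $D$ or $\Gamma$, so that the cleaner decay constant appearing in $E_i$ actually emerges.
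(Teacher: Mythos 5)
Your proposal follows the paper's own proof essentially step for step: the Galerkin orthogonality for $\tilde a$ obtained by subtracting \eqref{eq:r_variation} from \eqref{eq:r_variation_global}, quasi-optimality in the composite energy norm with the choice $w=\chi_i^{k_i,k_i-1}\psi_j^i$, the bound via \cref{lemma:aux_property}(iv)--(v), and the annulus recursion driven by testing the global equation against a cutoff multiple of $\psi_j^i$ and using that this test function and $\phi_j^i$ have disjoint supports. The only deviation is that you test with $\xi^2\psi_j^i$ (as in the constraint-version argument) where the paper uses $(1-\chi_i^{k_i-1,k_i-2})\psi_j^i$ to the first power; this is cosmetic, at worst doubling the cross-gradient term and mildly perturbing the constant in the recursion.
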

\begin{proof}
By the weak form of $\psi_{j,\textnormal{ms}}^i$ and 
$\psi_j^i$ in \eqref{eq:r_variation} and 
\eqref{eq:r_variation_global}, we have

$$
a(\psi_j^i-\psi_{j,\textnormal{ms}}^i, v) + 
s(\pi(\psi_j^i-\psi_{j,\textnormal{ms}}^i), \pi(v)) = 0, \quad
\forall v \in V_0(K_{i,k_i}).
$$
Taking 
$v=w-\psi_{j,\textnormal{ms}}^i$ with $w \in V_0(K_{i,k_i})$ in 
the above relation, we have

$$
\Vert \psi_j^i - \psi_{j,\textnormal{ms}}^i \Vert_a^2 + 
\Vert \pi(\psi_j^i - \psi_{j,\textnormal{ms}}^i) \Vert_s^2 
\le \Vert \psi_j^i - w \Vert_a^2 + 
\Vert \pi(\psi_j^i - w) \Vert_s^2, \quad
\forall w \in V_{0}(K_{i,k_i}).
$$
Let $w=\chi_i^{k_i,k_i-1}\psi_j^i$ in the above relation, 
we have 

\begin{equation}
\Vert \psi_j^i - \psi_{j,\textnormal{ms}}^i \Vert_a^2 + 
\Vert \pi(\psi_j^i - \psi_{j,\textnormal{ms}}^i) \Vert_s^2 
\le \Vert (1-\chi_i^{k_i,k_i-1}) \psi_j^i \Vert_a^2 + 
\Vert \pi((1-\chi_i^{k_i,k_i-1}) \psi_j^i) \Vert_s^2.
\end{equation}
From the \cref{lemma:aux_property} (iv) and (v), we have

$$
\begin{aligned}
&\Vert \psi_j^i - \psi_{j,\textnormal{ms}}^i \Vert_a^2 + 
\Vert \pi(\psi_j^i - \psi_{j,\textnormal{ms}}^i) \Vert_s^2 \\
\le& 3(1+\Lambda_{\Omega^{\epsilon}\setminus K_{i,k_i-1}}^{-1}) 
\left( \Vert \psi_j^i 
\Vert_{a(\Omega^{\epsilon}\setminus K_{i,k_i-1})}^2 + 
\Vert \pi(\psi_j^i)
\Vert_{s(\Omega^{\epsilon} \setminus K_{i,k_i-1})}^2
\right)
\end{aligned}
$$
Next, we will estimate $\Vert \psi_j^i 
\Vert_{a(\Omega^{\epsilon}\setminus K_{i,k_i-1})}^2 + 
\Vert \pi(\psi_j^i)
\Vert_{s(\Omega^{\epsilon} \setminus K_{i,k_i-1})}^2.$
We will show that this term can be bounded by the term: 
$\Vert \psi_j^i 
\Vert_{a(\Omega^{\epsilon}\setminus K_{i,k_i-2})}^2 + 
\Vert \pi(\psi_j^i)
\Vert_{s(\Omega^{\epsilon} \setminus K_{i,k_i-2})}^2.$
This recursive property is crucial in our convergence estimate.

Choosing test function $v=(1-\chi_i^{k_i-1,k_i-2})\psi_j^i$ in \eqref{eq:r_variation_global}, we have

\begin{equation}
\begin{aligned}
& a(\psi_j^i, (1-\chi_i^{k_i-1,k_i-2})\psi_j^i) + 
s(\pi(\psi_j^i), \pi((1-\chi_i^{k_i-1,k_i-2})\psi_j^i)) \\
=& s(\phi_j^i, \pi((1-\chi_i^{k_i-1,k_i-2})\psi_j^i)) = 0
\end{aligned}
\label{eq:psipi_k1k2}
\end{equation}
where the last equality follows from the facts that 
$(1-\chi_i^{k_i-1,k_i-2}) \psi_j^i$ and $\phi_j^i$ have disjoint support.
Note that 

$$
\begin{aligned}
& a(\psi_j^i, (1-\chi_i^{k_i-1,k_i-2})\psi_j^i) \\
=& \int_{\Omega^{\epsilon}\setminus K_{i,k_i-2}} 
\nabla \psi_j^i \cdot \nabla((1-\chi_i^{k_i-1,k_i-2})
\psi_j^i) \\
=& 
\int_{\Omega^{\epsilon} \setminus K_{i,k_i-2}} 
(1-\chi_i^{k_i-1,k_i-2}) |\nabla \psi_j^i|^2 - 
\int_{\Omega^{\epsilon} \setminus K_{i,k_i-2}} 
\psi_j^i \nabla \chi_i^{k_i-1,k_i-2} \cdot \nabla \psi_j^i.
\end{aligned}
$$
Consequently, we have

\begin{equation}
\begin{aligned}
\Vert \psi_j^i 
\Vert_{a(\Omega^{\epsilon} \setminus K_{i,k_i-1})}^2 \le& 
\int_{\Omega^{\epsilon} \setminus K_{i,k_i-2}} 
(1-\chi_i^{k_i-1,k_i-2}) |\nabla \psi_j^i|^2 \\
=& a(\psi_j^i, (1-\chi_i^{k_i-1,k_i-2})\psi_j^i) + 
\int_{\Omega^{\epsilon} \setminus K_{i,k_i-2}} 
\psi_j^i \nabla \chi_i^{k_i-1,k_i-2} \cdot \nabla \psi_j^i.
\\ \le& a(\psi_j^i, (1-\chi_i^{k_i-1,k_i-2})\psi_j^i) \\
&+ \Vert \psi_j^i \Vert_{a(K_{i,k_i-1}\setminus K_{i,k_i-2})}
\Vert \psi_j^i \Vert_{s(K_{i,k_i-1}\setminus K_{i,k_i-2})}.
\end{aligned}
\label{eq:psi_omega_ik2}
\end{equation}
Next, from the definition of cutoff function, we can conclude,

$$
\begin{aligned}
& s\left( \pi(\psi_j^i), \pi \left( (1-\chi_i^{k_i-1,k_i-2})
\psi_j^i \right) \right) \\
=& \Vert \pi(\psi_j^i) 
\Vert_{s(\Omega^{\epsilon} \setminus K_{i,k_i-1})} + 
\int_{K_{i,k_i-1} \setminus K_{i,k_i-2}} \tilde{\kappa}
\pi(\psi_j^i) \pi((1-\chi_i^{k_i-1,k_i-2}) \psi_j^i),
\end{aligned}
$$
so we have

\begin{equation}
\begin{aligned}
& \Vert \pi(\psi_j^i) 
\Vert_{s(\Omega^{\epsilon} \setminus K_{i,k_i-1})}^2 \\
=& s( \pi(\psi_j^i), \pi((1-\chi_i^{k_i-1,k_i-2}) \psi_j^i)) - 
\int_{K_{i,k_i-1} \setminus K_{i,k_i-2}} \tilde{\kappa} 
\pi(\psi_j^i) \pi((1-\chi_i^{k_i-1,k_i-2}) \psi_j^i) \\
\le& s( \pi(\psi_j^i), \pi((1-\chi_i^{k_i-1,k_i-2}) 
\psi_j^i )) + 
\Vert \psi_j^i \Vert_{s(K_{i,k_i-1} \setminus K_{i,k_i-2})}
\Vert \pi(\psi_j^i) \Vert_{s(K_{i,k_i-1} \setminus K_{i,k_i-2})}.
\end{aligned}
\label{eq:pipsi_omega_dele_ik1}
\end{equation}
Finally, summing \eqref{eq:psi_omega_ik2} and 
\eqref{eq:pipsi_omega_dele_ik1} and using \eqref{eq:psipi_k1k2},
we have

\begin{equation}
\begin{aligned}
&\Vert \psi_j^i \Vert_{a(\Omega^{\epsilon} \setminus K_{i,k_i-1})}^2 + 
\Vert \pi(\psi_j^i) \Vert_{s(\Omega^{\epsilon} \setminus K_{i,k_i-1})}^2 \\
\le & \Vert \psi_j^i \Vert_{s(K_{i,k_i-1} \setminus K_{i,k_i-2})} 
\left(
\Vert \psi_j^i \Vert_{a(K_{i,k_i-1} \setminus K_{i,k_i-2})} +
\Vert \pi(\psi_j^i) \Vert_{s(K_{i,k_i-1} \setminus K_{i,k_i-2})}
\right) \\
\le & 2(1+\Lambda_{K_{i,k_i-1} \setminus K_{i,k_i-2}}^{-\frac{1}{2}}) \left(
\Vert \psi_j^i \Vert_{a(K_{i,k_i-1} \setminus K_{i,k_i-2})}^2 +
\Vert \pi(\psi_j^i) \Vert_{s(K_{i,k_i-1} \setminus K_{i,k_i-2})}^2
\right)
\end{aligned}
\label{ineq:r_psirecu_k1k2}
\end{equation}
where the last inequality follows from \cref{lemma:aux_property} (iii). 
By using this, we have

$$
\begin{aligned}
& \Vert \psi_j^i \Vert_{a(\Omega^{\epsilon} \setminus K_{i,k_i-2})}^2 + \Vert \pi(\psi_j^i) \Vert_{s(\Omega^{\epsilon} \setminus K_{i,k_i-2})}^2 \\
=& \Vert \psi_j^i \Vert_{a(\Omega^{\epsilon} \setminus K_{i,k_i-1})}^2 + 
\Vert \pi(\psi_j^i) \Vert_{s(\Omega^{\epsilon} \setminus K_{i,k_i-1})}^2 \\
&+ \Vert \psi_j^i \Vert_{a(K_{i,k_i-1} \setminus K_{i,k_i-2})}^2 + 
\Vert \pi(\psi_j^i) \Vert_{s(K_{i,k_i-1} \setminus K_{i,k_i-2})}^2 \\
\ge& \left( 1+(2(1+\Lambda_{K_{i,k_i-1} \setminus K_{i,k_i-2}}^{-\frac{1}{2}}))^{-1} \right) 
(\Vert \psi_j^i \Vert_{a(\Omega^{\epsilon} \setminus K_{i,k_i-1})}^2 + \Vert \pi(\psi_j^i) \Vert_{s(\Omega^{\epsilon} \setminus K_{i,k_i-1})}^2)
\end{aligned}
$$
where we used \eqref{ineq:r_psirecu_k1k2} in the last inequality. Using the inequality recursively, we have

$$
\begin{aligned}
&\Vert \psi_j^i 
\Vert_{a(\Omega^{\epsilon} \setminus K_{i,k_i-1})}^2 
+ \Vert \pi(\psi_j^i) \Vert_{s(\Omega^{\epsilon} \setminus K_{i,k_i-1})}^2 \\
\le& \left( 1+(2(1+\Lambda_{K_{i,k_i-1}}^{-\frac{1}{2}}))^{-1} \right)^{1-k_i}
\left(\Vert \psi_j^i \Vert_a^2 + \Vert \pi(\psi_j^i) \Vert_s^2\right).
\end{aligned}
$$
This completes the proof.
\end{proof}

Different with the constraint version \cref{lemma:glolocal_r}, the above lemma has improved the convergence rate, because the error bound is independent of the constant $D$.
We need the following lemma to prove the convergence, see \cite{chung2018fast}.

\begin{lemma} \label{lemma:psi_gloms_recu}
With the same notation in \cref{lemma:glolocal_r}, we have

$$
\begin{aligned}
& \Vert \sum_{i=1}^{N_c} \sum_{j=1}^{l_i} c_j^i (\psi_j^i - \psi_{j,\text{ms}}^i) \Vert_a^2 + 
\Vert \sum_{i=1}^{N_c} \sum_{j=1}^{l_i} c_j^i \pi(\psi_j^i - \psi_{j,\text{ms}}^i) \Vert_s^2 \\
\le & C (1+\Lambda_{\Omega^{\epsilon}}^{-1}) (k+1)^d 
\sum_{i=1}^{N_c} \left( \Vert \sum_{j=1}^{l_i} c_j^i (\psi_j^i - \psi_{j,\text{ms}}^i) \Vert_a^2 + 
\Vert \sum_{j=1}^{l_i} c_j^i \pi(\psi_j^i - \psi_{j,\text{ms}}^i) \Vert_s^2 \right).    
\end{aligned}
$$
\end{lemma}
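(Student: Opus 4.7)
The plan is to reduce everything to a finite-overlap inequality for the bilinear form
$$B(u,v) := a(u,v) + s(\pi(u),\pi(v)),$$
which is the natural energy associated with the relaxed minimization problem \eqref{eq:r_mini_global}. Writing $\eta_i := \sum_{j=1}^{l_i} c_j^i (\psi_j^i - \psi_{j,\textnormal{ms}}^i)$ and $E := \sum_i \eta_i$, the claim becomes $B(E,E) \le C(1+\Lambda_{\Omega^{\epsilon}}^{-1})(k+1)^d \sum_i B(\eta_i,\eta_i)$. A first observation, obtained by taking linear combinations of \eqref{eq:r_variation_global} and \eqref{eq:r_variation}, is that each $\eta_i$ inherits the Galerkin-type orthogonality $B(\eta_i, v) = 0$ for all $v \in V_0(K_{i,k_i})$; this is the sole structural property I will exploit beyond the decay Lemma \ref{lemma:glolocal_r}.

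Next, I would introduce a coloring of the coarse cells into $N_\chi = O((k+1)^d)$ classes $\mathcal{C}_1, \dots, \mathcal{C}_{N_\chi}$ such that, within each class, the oversampled domains $\{K_{i,k_i}\}_{i\in\mathcal{C}_\ell}$ are pairwise disjoint. This is possible because each $K_{i,k_i}$ spans only $O((k+1)^d)$ coarse cells. Splitting $E = \sum_\ell E_\ell$ with $E_\ell = \sum_{i\in\mathcal{C}_\ell}\eta_i$ and applying Cauchy--Schwarz in the color index gives $B(E,E) \le N_\chi \sum_\ell B(E_\ell, E_\ell)$. Within a single color class, one still has to expand $B(E_\ell,E_\ell) = \sum_{i,i'\in\mathcal{C}_\ell} B(\eta_i,\eta_{i'})$, and the cross terms must be controlled. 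For $i\neq i'$ in the same class, I would pick a cutoff $\chi_i^{k_i,k_i-1}$ as in \eqref{eq:def_cutoff}, write $\eta_{i'} = \chi_i^{k_i,k_i-1}\eta_{i'} + (1-\chi_i^{k_i,k_i-1})\eta_{i'}$, and use Galerkin orthogonality of $\eta_i$ against the first piece; the second piece is bounded using Lemma \ref{lemma:aux_property}(iv)--(v), which is precisely the mechanism that produces the factor $(1+\Lambda_{\Omega^{\epsilon}}^{-1})$.

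The main obstacle I foresee is that the functions $\eta_i$ are not compactly supported, because the global basis $\psi_j^i$ lives on all of $\Omega^{\epsilon}$; hence the naive "disjoint support $\Rightarrow$ disjoint contribution" strategy does not apply. The remedy is to handle the long-range interaction between global basis functions through the cutoff $s$-to-$a$ conversion in Lemma \ref{lemma:aux_property}(iii)--(v) and, when necessary, to invoke Lemma \ref{lemma:glolocal_r} (applied with $k_i$ replaced by an auxiliary index) to absorb exponentially small tails into the local energies. The delicate bookkeeping is to ensure that the only overall constants surviving this process are the combinatorial factor $(k+1)^d$ from the coloring and the factor $(1+\Lambda_{\Omega^{\epsilon}}^{-1})$ from the cutoff, with no spurious contrast-dependent terms.
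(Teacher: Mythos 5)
First, note that the paper does not actually prove this lemma: it is quoted from \cite{chung2018fast}, so the comparison below is against the standard argument in that reference. Your proposal has a genuine gap in the step that is supposed to produce the factor $(k+1)^d$. The coloring correctly yields $B(E,E)\le N_\chi\sum_\ell B(E_\ell,E_\ell)$ with $N_\chi=O((k+1)^d)$, but inside a color class your treatment of a cross term $B(\eta_i,\eta_{i'})$ — split $\eta_{i'}$ with $\chi_i^{k_i,k_i-1}$, kill the inner piece by orthogonality, bound the outer piece by \cref{lemma:aux_property}(iv)--(v) — only delivers $|B(\eta_i,\eta_{i'})|\le C(1+\Lambda_{\Omega^{\epsilon}}^{-1})^{1/2}B(\eta_i,\eta_i)^{1/2}B(\eta_{i'},\eta_{i'})^{1/2}$, i.e.\ the trivial Cauchy--Schwarz bound up to a constant, with no decay in the separation of $i$ and $i'$. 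Summing over the $|\mathcal{C}_\ell|^2$ pairs in a class then costs an extra factor $|\mathcal{C}_\ell|\sim N_c/(k+1)^d$, and the final bound degenerates to $CN_c\sum_iB(\eta_i,\eta_i)$ rather than $C(k+1)^d\sum_iB(\eta_i,\eta_i)$ (which would turn the logarithmic oversampling cost in \cref{thm:convergence_r} into a polynomial one). Your proposed rescue via \cref{lemma:glolocal_r} does not close this: that lemma bounds the total energy of $\psi_j^i-\psi_{j,\textnormal{ms}}^i$, and says nothing about where that energy sits, so it cannot show that distant $\eta_i,\eta_{i'}$ interact weakly.

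The missing idea is that you are using the wrong half of the orthogonality. Besides $B(\eta_i,v)=0$ for $v\in V_0(K_{i,k_i})$, one also has $B(\eta_i,v)=0$ for every $v\in V$ that \emph{vanishes on} $K_{i,k_i}$: indeed $a(\psi_j^i,v)+s(\pi(\psi_j^i),\pi(v))=s(\phi_j^i,\pi(v))=0$ because $v|_{K_i}=0$, while $a(\psi_{j,\textnormal{ms}}^i,v)=0$ and $s(\pi(\psi_{j,\textnormal{ms}}^i),\pi(v))=0$ because $\psi_{j,\textnormal{ms}}^i$ and $v$ live on disjoint unions of coarse cells. With this \emph{outer} orthogonality no coloring and no cross terms are needed: write $B(E,E)=\sum_iB(E,\eta_i)$, insert the cutoff $\chi_i^{k_i+1,k_i}$ of \eqref{eq:def_cutoff} to get $B(E,\eta_i)=B(\chi_i^{k_i+1,k_i}E,\eta_i)$ since $(1-\chi_i^{k_i+1,k_i})E$ vanishes on $K_{i,k_i}$, apply Cauchy--Schwarz for $B$, bound $\Vert\chi_i^{k_i+1,k_i}E\Vert_a^2+\Vert\pi(\chi_i^{k_i+1,k_i}E)\Vert_s^2$ by $C(1+\Lambda_{\Omega^{\epsilon}}^{-1})\bigl(\Vert E\Vert_{a(K_{i,k_i+1})}^2+\Vert\pi(E)\Vert_{s(K_{i,k_i+1})}^2\bigr)$ as in \cref{lemma:aux_property}, and then sum over $i$ using that each point of $\Omega^{\epsilon}$ lies in at most $O((k+1)^d)$ of the regions $K_{i,k_i+1}$. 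This is where the factor $(k+1)^d$ actually comes from; your version localizes $E$ to the \emph{complement} of $K_{i,k_i-1}$, and those complements have no finite-overlap property.
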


\begin{theorem} \label{thm:convergence_r}
Let $u$ be the solution of \eqref{eq:perforated_variation}  and 
$u_{\textnormal{ms}}$ be the solution of \eqref{eq:perforated_galerkin_ms}. 
Define $E=\max_{1\le i \le N_c}E_i$.
We have 

$$
\Vert u - u_{\textnormal{ms}} \Vert \le C \Lambda_{\Omega^{\epsilon}}^{-\frac{1}{2}}
\Vert \tilde{\kappa}^{-\frac{1}{2}} f \Vert + 
C (k+1)^{\frac{d}{2}}(1+\Lambda_{\Omega^{\epsilon}}^{-1})^{\frac{1}{2}} (1+D_{\Omega^{\epsilon}})^{\frac{1}{2}} E^{\frac{1}{2}} \Vert u_{\textnormal{glo}} \Vert_s,
$$
where $u_{\textnormal{glo}}$ is the solution of 
\eqref{eq:perforated_galerkin_glo}. 
Moreover, if the oversampling parameter
$k$ is sufficiently large 
and $\{\chi_i\}_{i=1}^{N_v}$ is a set of bilinear 
partition of unity, we have 

$$
\Vert u - u_{\textnormal{ms}} \Vert_a \le C H \Lambda_{\Omega^{\epsilon}}^{-\frac{1}{2}} \Vert f \Vert.
$$
\end{theorem}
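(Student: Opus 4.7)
The plan is to mimic the argument used for the constraint version in \cref{thm:convergence_c}, with \cref{lemma:glolocal_r} and \cref{lemma:psi_gloms_recu} replacing their constraint-version counterparts. First I would expand the global solution as $u_{\textnormal{glo}} = \sum_{i=1}^{N_c}\sum_{j=1}^{l_i} c_j^i \psi_j^i$, then form the surrogate $v = \sum_{i=1}^{N_c}\sum_{j=1}^{l_i} c_j^i \psi_{j,\textnormal{ms}}^i \in V_{\textnormal{ms}}$ with the same coefficients but local basis. Galerkin orthogonality gives
$$
\Vert u - u_{\textnormal{ms}} \Vert_a \le \Vert u - v \Vert_a \le \Vert u - u_{\textnormal{glo}} \Vert_a + \Vert \sum_{i,j} c_j^i(\psi_j^i - \psi_{j,\textnormal{ms}}^i) \Vert_a,
$$
and \cref{lemma:uuglo_esti} handles the first term on the right.

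For the second term I would apply \cref{lemma:psi_gloms_recu} to reduce a sum-of-differences estimate to a sum of local ones, and then invoke \cref{lemma:glolocal_r} element-by-element to get
$$
\Vert \sum_{i,j} c_j^i(\psi_j^i - \psi_{j,\textnormal{ms}}^i) \Vert_a^2 \le C(1+\Lambda_{\Omega^{\epsilon}}^{-1})(k+1)^d E \sum_{i=1}^{N_c}\left( \Vert \sum_{j=1}^{l_i} c_j^i \psi_j^i \Vert_a^2 + \Vert \sum_{j=1}^{l_i} c_j^i \pi(\psi_j^i) \Vert_s^2\right).
$$
The remaining step is to bound the right-hand side by $\Vert u_{\textnormal{glo}} \Vert_s^2$. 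Summing over $i$ and recognizing $\pi(\psi_j^i) = \phi_j^i$ from \eqref{eq:r_variation_global} gives the $\pi$-part directly. For the $a$-part I would test \eqref{eq:r_variation_global} against $\psi_j^i$ itself, combine with \cref{lemma:vaux_v} applied to each $\phi_j^i$, and use Cauchy–Schwarz to obtain $\Vert \psi_j^i \Vert_a^2 \le (1 + D_{\Omega^{\epsilon}})\Vert \phi_j^i \Vert_s^2$; this is exactly where the $(1+D_{\Omega^{\epsilon}})^{1/2}$ factor in the statement appears. Putting the pieces together yields the first inequality.

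For the second inequality, I would bound $\Vert u_{\textnormal{glo}} \Vert_s$ exactly as in the proof of \cref{thm:convergence_c}: first $\Vert u_{\textnormal{glo}} \Vert_s^2 \le C\max\{\tilde{\kappa}\}\Vert u_{\textnormal{glo}} \Vert_a^2$ via a Poincar\'e-type inequality, then test \eqref{eq:perforated_galerkin_glo} with $u_{\textnormal{glo}}$ to get $\Vert u_{\textnormal{glo}} \Vert_a^2 \le \Vert \tilde{\kappa}^{-1/2} f \Vert\,\Vert u_{\textnormal{glo}} \Vert_s$, so $\Vert u_{\textnormal{glo}} \Vert_s \le C\max\{\tilde{\kappa}\}\Vert \tilde{\kappa}^{-1/2} f \Vert$. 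Substituting into the first inequality reduces everything to requiring
$$
H^{-2}\, C(k_i+1)^{d/2}(1+\Lambda_{\Omega^{\epsilon}}^{-1})^{1/2}(1+D_{\Omega^{\epsilon}})^{1/2} E_i^{1/2} = O(1),
$$
which, after taking logarithms and using the explicit form of $E_i$, is achieved by choosing $k_i = O\!\left(\log(H^{-1})/\log(1+\Lambda_{K_{i,k_i-1}}^{1/2})\right)$, together with the bilinear partition-of-unity assumption to upgrade the first term to $CH\Lambda_{\Omega^{\epsilon}}^{-1/2}\Vert f \Vert$.

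The main obstacle I anticipate is the bookkeeping around the term $\Vert \pi(\psi_j^i) \Vert_s$ in \cref{lemma:psi_gloms_recu}: unlike the constraint version where $\pi(\psi_j^i) = \phi_j^i$ holds exactly by construction, here the cleanest derivation of $\Vert \psi_j^i \Vert_a^2 + \Vert \pi(\psi_j^i) \Vert_s^2 \lesssim (1+D_{\Omega^{\epsilon}})\Vert \phi_j^i \Vert_s^2$ requires inserting the function produced by \cref{lemma:vaux_v} into \eqref{eq:r_variation_global} as a test function and tracking constants. Everything else is a direct transcription of the constraint-version argument with the sharper decay factor from \cref{lemma:glolocal_r}.
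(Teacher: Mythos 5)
Your overall skeleton (Galerkin orthogonality, \cref{lemma:uuglo_esti} for $u-u_{\textnormal{glo}}$, then \cref{lemma:psi_gloms_recu} plus \cref{lemma:glolocal_r} for the localization error) matches the paper. But there is a genuine gap in the step that bounds the resulting right-hand side by $\Vert u_{\textnormal{glo}}\Vert_s^2$. You write that ``recognizing $\pi(\psi_j^i)=\phi_j^i$ from \eqref{eq:r_variation_global} gives the $\pi$-part directly.'' That identity is false for the relaxed basis: it is exactly the constraint $s(\psi_j^i-\phi_j^i,q)=0$ in \eqref{eq:c_variation_global} that forces $\pi(\psi_j^i)=\phi_j^i$ in the constraint version, and the relaxed version replaces that constraint by a penalty, so in general $\pi(\psi_j^i)\ne\phi_j^i$ and $\pi(u_{\textnormal{glo}})\ne\sum_{i,j}c_{ij}\phi_j^i$. (You even note this asymmetry yourself in your final paragraph, which contradicts the earlier step.) As a consequence, the clean identification $\sum_{i,j}(c_{ij})^2=\Vert\pi(u_{\textnormal{glo}})\Vert_s^2\le\Vert u_{\textnormal{glo}}\Vert_s^2$ that closes the constraint-version proof is unavailable here, and your sketch contains no replacement for it. The paper fills this hole with a separate linear-algebra argument: it sets $b_{lk}=s(\phi_k^l,\pi(u_{\textnormal{glo}}))$ and $a_{ij,lk}=a(\psi_k^l,\psi_j^i)+s(\pi(\psi_k^l),\pi(\psi_j^i))$, observes $\vec{\boldsymbol{b}}=A\vec{\boldsymbol{c}}$ with $\Vert\vec{\boldsymbol{b}}\Vert_2=\Vert\pi(u_{\textnormal{glo}})\Vert_s$, and then uses \cref{lemma:vaux_v} (testing \eqref{eq:r_variation_global} with the lifted function $z$ satisfying $\pi(z)=\sum c_{ij}\phi_j^i$) to prove $\Vert\vec{\boldsymbol{c}}\Vert_2^2\le(1+D_{\Omega^{\epsilon}})\,\vec{\boldsymbol{c}}^TA\vec{\boldsymbol{c}}$, i.e.\ a spectral bound on $A^{-1}$. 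That is the step your proposal is missing, and it is where the factor $(1+D_{\Omega^{\epsilon}})$ actually enters.

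Relatedly, you misattribute the origin of $(1+D_{\Omega^{\epsilon}})^{1/2}$: testing \eqref{eq:r_variation_global} with $v=\psi_j^i$ and applying Cauchy--Schwarz gives $\Vert\psi_j^i\Vert_a^2+\Vert\pi(\psi_j^i)\Vert_s^2\le\Vert\phi_j^i\Vert_s^2=1$ with no constant $D$ at all; this is how the paper normalizes the per-basis energies, and $D_{\Omega^{\epsilon}}$ plays no role there. A smaller point: in your choice of the oversampling parameter, the decay rate coming from the relaxed $E_i$ is $\log\bigl(1+(2(1+\Lambda_{K_{i,k_i-1}}^{-1/2}))^{-1}\bigr)$, not $\log(1+\Lambda_{K_{i,k_i-1}}^{1/2})$; since the paper delegates this step to the proof of \cref{thm:convergence_c} it is a minor bookkeeping issue, but you should carry the correct exponential base through the logarithm.
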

\begin{proof}
The proof follows the same procedure as the proof of \cref{thm:convergence_c}. We write 
$u_{\textnormal{glo}} = \sum_{i=1}^{N_c}\sum_{j=1}^{l_i} c_{ij}\psi_j^i$ and 
define $v \coloneqq \sum_{i=1}^{N_c}\sum_{j=1}^{l_i} c_{ij}\psi_{j,\textnormal{ms}}^i$. 
By the \cref{lemma:glolocal_r} and \cref{lemma:psi_gloms_recu}, we have

$$
\begin{aligned}
&\Vert u_{\textnormal{glo}} - v \Vert_a^2 =
\Big\Vert \sum_{j=1}^{N_c} \sum_{j=1}^{l_i} 
c_{ij} (\psi_j^i - \psi_{j,\textnormal{ms}}^i)\Big\Vert_a^2 \\ 
\le& C(1+\Lambda_{\Omega^{\epsilon}}^{-1}) (k+1)^d \sum_{i=1}^{N_c} 
\left(
\Big\Vert \sum_{j=1}^{l_i} c_{ij} (\psi_j^i - \psi_{j,\textnormal{ms}}^i)
\Big\Vert_a^2 + \Big\Vert 
\sum_{j=1}^{l_i} c_{ij} \pi(\psi_j^i  - \psi_{j,\textnormal{ms}}^i )\Big\Vert_s^2
\right) \\
\le& C (1+\Lambda_{\Omega^{\epsilon}}^{-1}) (k+1)^d \sum_{i=1}^{N_c} E_i \sum_{j=1}^{l_i} (c_{ij})^2 
\left(
\Vert \psi_j^i \Vert_a^2 + \Vert \pi(\psi_j^i)  \Vert_s^2
\right).
\end{aligned}
$$
Choosing the test function $v=\psi_j^i$ in \eqref{eq:r_variation_global}, we obtain that 
$\Vert \psi_j^i \Vert_a^2 + \Vert \pi(\psi_j^i) \Vert_s^2 \le \Vert \phi_j^i \Vert_s^2=1.$
Therefore, 

$$
\begin{aligned}
\Vert u_{\textnormal{glo}} - v \Vert_a^2 
\le& C(k+1)^d(1+\Lambda_{\Omega^{\epsilon}}^{-1}) (1+D_{\Omega^{\epsilon}}) E \sum_{i=1}^{N_c} 
\sum_{j=1}^{l_i} (c_{ij})^2 \Vert \phi_j^i\Vert_s^2 \\
=& C(k+1)^d(1+\Lambda_{\Omega^{\epsilon}}^{-1}) (1+D_{\Omega^{\epsilon}}) E \sum_{i=1}^{N_c} \sum_{j=1}^{l_i} (c_{ij})^2.
\end{aligned}
$$
Next, we will estimate $\sum_{i=1}^{N_c} \sum_{j=1}^{l_i} (c_{ij})^2.$ 
Note that $\pi(u_{\textnormal{glo}})=\sum_{i=1}^{N_c} \sum_{j=1}^{l_i} c_{ij}\pi(\psi_j^i).$ Using the variational formulation \eqref{eq:r_variation_global}, we obtain

$$
\begin{aligned}
b_{lk} &\coloneqq s(\phi_k^l, \pi(u_{\textnormal{glo}})) \\
&= \sum_{i=1}^{N_c} \sum_{j=1}^{l_i} c_{ij} s(\phi_k^l, \pi(\psi_j^i)) \\
&= \sum_{i=1}^{N_c} \sum_{j=1}^{l_i} c_{ij}
\left(
a(\psi_k^l, \psi_j^i) + s(\pi(\psi_k^l), \pi(\psi_j^i))
\right)
\end{aligned}
$$
If we denote $a_{ij,lk} \coloneqq a(\psi_k^l, \psi_j^i) + s(\pi(\psi_k^l), \pi(\psi_j^i))  \in \mathbb{R}^{\mathcal{N}\times\mathcal{N}},
\vec{\boldsymbol{b}} = (b_{lk}) \in \mathbb{R}^{\mathcal{N}}$ 
and $\vec{\boldsymbol{c}} = (c_{ij}) \in \mathbb{R}^{\mathcal{N}}$ with 
$\mathcal{N} \coloneqq \sum_{i=1}^{N_c} l_i$, then we have

$$
\vec{\boldsymbol{b}} = A\vec{\boldsymbol{c}} \quad \textnormal{and} \quad
\Vert \vec{\boldsymbol{c}} \Vert_2 \le \Vert A^{-1} \Vert_2 
\Vert \vec{\boldsymbol{b}} \Vert_2,
$$
where $A \coloneqq (a_{ij,lk}) \in 
\mathbb{R}^{\mathcal{N} \times \mathcal{N}}$ and $\Vert \cdot \Vert_2$ 
denotes the standard Euclidean norm for vectors in 
$\mathbb{R}^{\mathcal{N}}$ and its induced matrix norm in 
$\mathbb{R}^{\mathcal{N} \times \mathcal{N}}$. 
By the definition of $\pi : V \rightarrow V_{\textnormal{aux}}$, we have

$$
\pi(u_{\textnormal{glo}}) = \pi(\pi(u_{\textnormal{glo}})) = 
\sum_{i=1}^{N} \sum_{j=1}^{l_i} 
s(\pi(u_{\textnormal{glo}}), \phi_j^i)\phi_j^i = 
\sum_{i=1}^{N} \sum_{j=1}^{l_i} b_{ij} \phi_j^i.
$$
Thus, we have 
$\Vert \vec{\boldsymbol{b}} \Vert_2 = \Vert \pi(u_{\textnormal{glo}}) \Vert_s$. 
We define $\phi \coloneqq \sum_{i=1}^{N_c} \sum_{j=1}^{l_i} c_{ij}\phi_j^i$. 
Note that $\Vert \phi \Vert_s = \Vert \vec{\boldsymbol{c}} \Vert_2.$ 
Consequently, by \cref{lemma:vaux_v}, there exists a function $z \in V$ 
such that $\pi(z) = \phi$ and $\Vert z \Vert_a^2 \le D_{\Omega^{\epsilon}} \Vert\phi\Vert_s^2$.
Since the global multiscale basis $\psi_j^i$ satisfies \eqref{eq:r_variation_global} 
and $u_{\textnormal{glo}}$ is a linear combination of $\psi_j^i$'s, we have 

\begin{equation}
a(u_{\textnormal{glo}}, v) + s(\pi(u_{\textnormal{glo}}), \pi(v)) = s(\phi, \pi(v))
\textnormal{ for all } v \in V.
\label{eq:glo_varia_r}
\end{equation}
Picking $v=z$ in \eqref{eq:glo_varia_r}, we arrive at 

$$
\begin{aligned}
\Vert \phi \Vert_s^2 
& = a(u_{\textnormal{glo}}, z) + s(\pi(u_{\textnormal{glo}}), \pi(z)) \\
& \le \Vert u_{\textnormal{glo}} \Vert_a \cdot D_{\Omega^{\epsilon}}^{\frac{1}{2}} \Vert \phi \Vert_s + 
\Vert \pi(u_{\textnormal{glo}}) \Vert_s \cdot \Vert \phi \Vert_s \\
& \le (1+D_{\Omega^{\epsilon}})^{\frac{1}{2}} \Vert \phi \Vert_s 
\left(
\Vert u_{\textnormal{glo}} \Vert_a^2 + \Vert \pi (u_{\textnormal{glo})} \Vert_s^2
\right)^{\frac{1}{2}}.
\end{aligned}
$$
Therefore, we have

$$
\Vert \vec{\boldsymbol{c}} \Vert_2^2 = \Vert \phi \Vert_s^2 \le 
(1+D_{\Omega^{\epsilon}}) \left(
\Vert u_{\textnormal{glo}} \Vert_a^2 + \Vert \pi(u_{\textnormal{glo}}) \Vert_s^2
\right) = (1+D_{\Omega^{\epsilon}}) \vec{\boldsymbol{c}}^T A \vec{\boldsymbol{c}}.
$$
From the above, we see that the largest eigenvalue of $A^{-1}$ is bounded 
by $(1+D_{\Omega^{\epsilon}})$ and we have the following estimate 

$$
\Vert \vec{\boldsymbol{c}} \Vert_2 \le (1+D_{\Omega^{\epsilon}}) \Vert \vec{\boldsymbol{b}} \Vert_2
\le (1+D_{\Omega^{\epsilon}}) \Vert u_{\textnormal{glo}} \Vert_s.
$$
As a result, we have

$$
\Vert u_{\textnormal{glo}} - v \Vert_a^2 \le 
C(k+1)^d(1+\Lambda_{\Omega^{\epsilon}}^{-1}) (1+D_{\Omega^{\epsilon}}) E
(1+D_{\Omega^{\epsilon}}) \Vert u_{\textnormal{glo}} \Vert_s^2.
$$
The rest of the proofs follows from \cref{thm:convergence_c}.
\end{proof}

Finally, we remark that all of the main theorem has been proved.

\section{Numerical results} \label{sec:numericalresults}
In this section, we present two examples using the framework presented in Section \ref{sec:cembasis} for equation \eqref{eq:pde_perforated}. 
The goal is to demonstrate the accuracy of our proposed multiscale model reduction method and establish its linear decrease with the coarse mesh size $H$ when the oversampling layers $m$ are appropriately chosen.

\begin{figure}[htbp]
\centering
\includegraphics[scale=0.4]{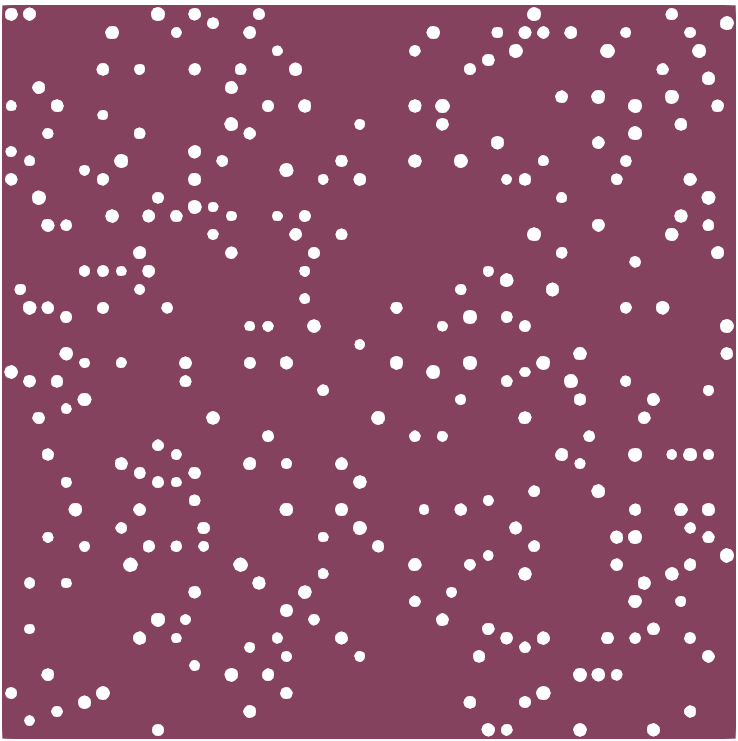}
\includegraphics[scale=0.4]{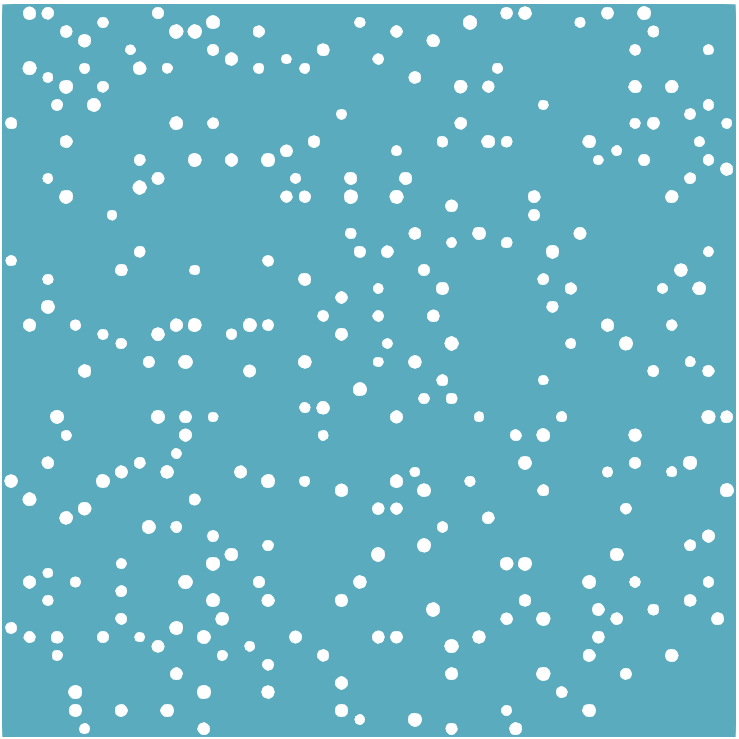}
\caption{Two heterogenous perforated media used in the simulations. The left media will be used for case 1, while the others will be used for case 2.}
\label{pic:two_perforated_domain}
\end{figure}

In all the examples, we set $\Omega = (0,1) \times (0,1)$, and test both constraint version \eqref{eq:c_mini} and relaxed version \eqref{eq:r_mini}. 
In each case, we will use different media and source term.
There are two different perforated domains that depicted in \autoref{pic:two_perforated_domain}, the left will be test in case 1, while the others in case 2. 
In this paper, we assume the perforations $\mathcal{B}^{\epsilon}$ are all circular. 

In the following experiments, we define some numerical solution errors, 

$$
e_{L^2} \coloneqq \frac{\Vert u_h-u_{\textnormal{ms}} \Vert}{\Vert u_h \Vert} , \quad
e_{H^1} \coloneqq \frac{\Vert u_h-u_{\textnormal{ms}} \Vert_{a}}{\Vert u_h \Vert_{a}}.
$$
where $u_h$ is the fine-grid first order FEM solution. 
In here, we have to note that there are only 3 multiscale basis functions be used in each local domain for every cases. 

The computational domains and meshes were constructed using the GMSH software \cite{geuzaine2009gmsh}. In each case, fine-grid triangles generated by GMSH had a diameter of $1/200$. To visualize the numerical results, the ParaView software \cite{ahrens200536} was employed.

% 53193 nodes, 102762 cells
\subsection{Case 1}
In this instance, we employ the perforated domain depicted on the left side of \autoref{pic:two_perforated_domain}. The source term is illustrated in \autoref{pic:case1_f}. Specifically, the source term exhibits only two distinct values within the domain, where we designate four subdomains with a value of 1 (depicted in red) and assign a value of 0 to the remaining regions (depicted in blue).

\begin{figure}[htbp]
\centering
\includegraphics[scale=0.45]{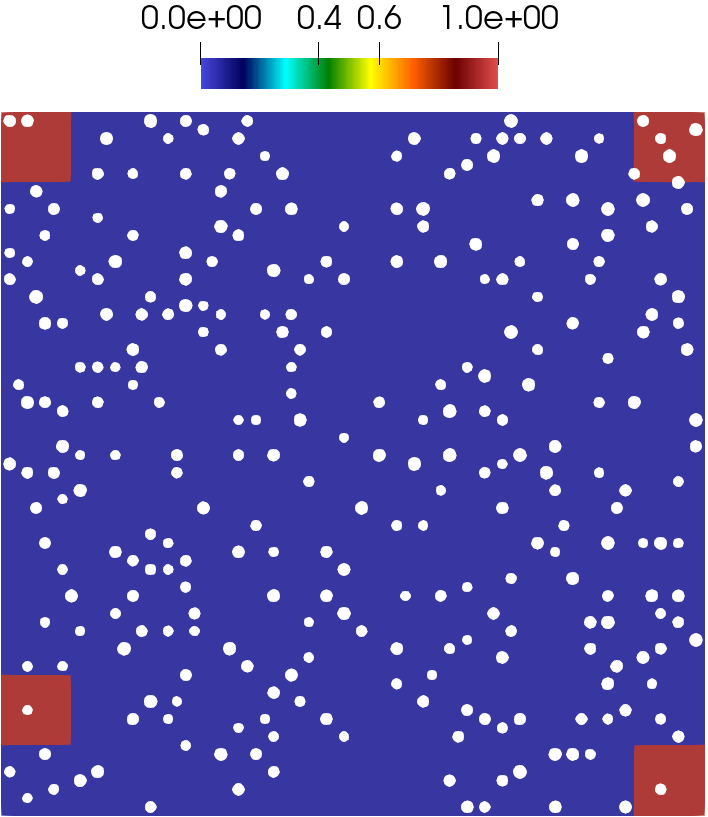}
\caption{Source term of case 1.}
\label{pic:case1_f}
\end{figure}

\begin{table}[htbp]
\centering
\begin{tabular}{|c|c|cc|cc|}
\hline
\multirow{2}*{$H$} & \multirow{2}*{$m$} & 
\multicolumn{2}{|c|}{Constraint Version} & 
\multicolumn{2}{|c|}{Relaxed Version}
\\ \cline{3-6}
& & $e_{L^2}$ & $e_{H^1}$
& $e_{L^2}$ & $e_{H^1}$ \\ \hline
$1/10$ & 2 & 4.80e-02 & 2.04e-01 & 4.56e-02 & 2.02e-01
\\
$1/20$ & 3 & 5.72e-03 & 5.05e-02 & 5.56e-03 & 4.98e-02
\\ 
$1/40$ & 4 & 3.66e-04 & 6.35e-03 & 3.50e-04 & 6.11e-03
\\ \hline
\end{tabular}
\caption{Numerical errors of CEM-GMsFEM using 3 basis functions in each oversampling domain $K_{i,m_i}$ with different coarse mesh size $H.$ The source and domain are depicted in \autoref{pic:case1_f}.}
\label{tab:case1_error}
\end{table}

\begin{figure}[htbp]
\centering
\includegraphics[scale=0.36]{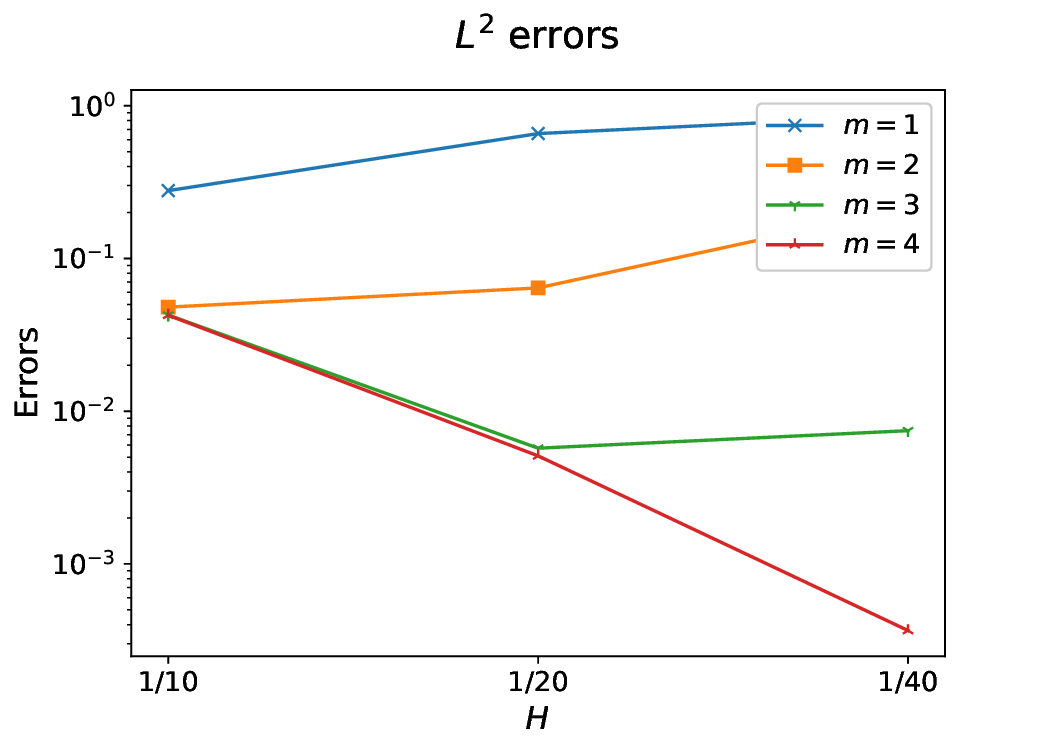}
\includegraphics[scale=0.36]{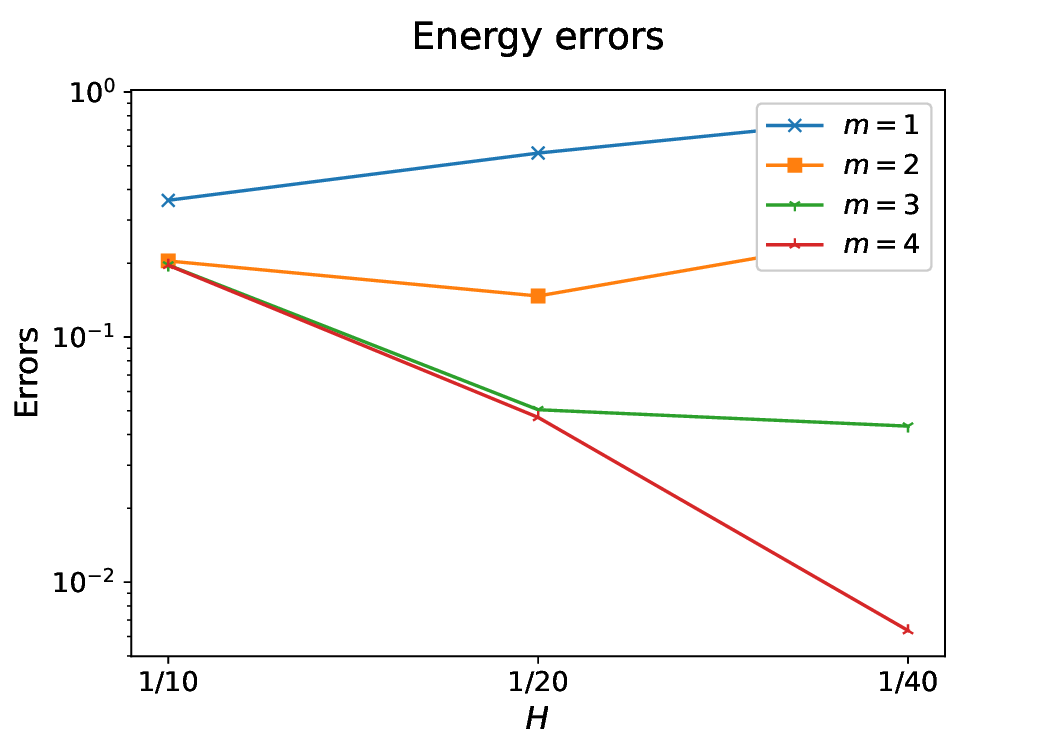}
\caption{Relative L2 error (Left) and Relative h1 error (Right) of Constraint version of CEM-GMsFEM. In here, we consider case 1, the source term and domain are depicted in \autoref{pic:case1_f}.}
\label{pic:case1_error_c}
\end{figure}

\begin{figure}[htbp]
\centering
\includegraphics[scale=0.36]{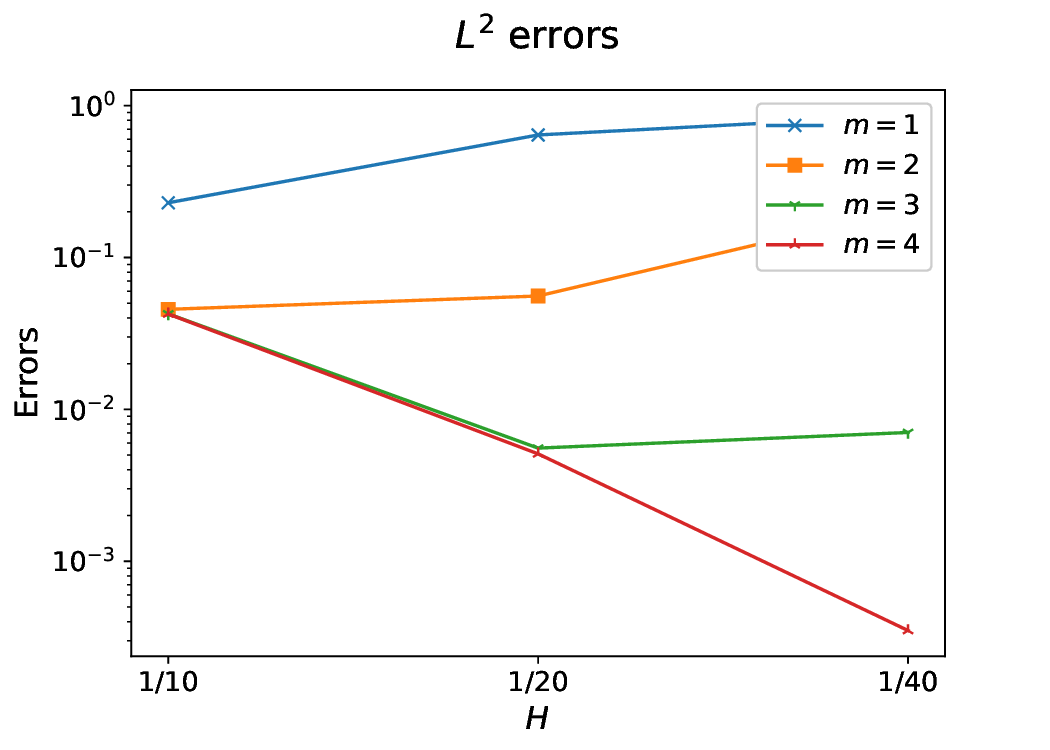}
\includegraphics[scale=0.36]{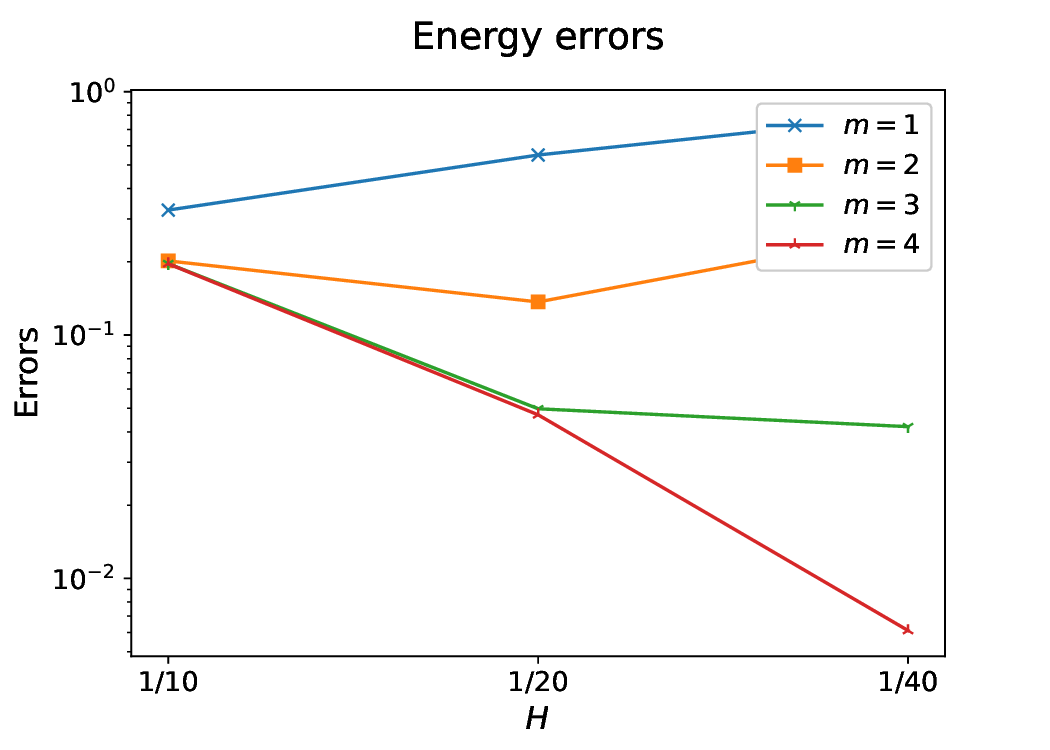}
\caption{Relative L2 error (Left) and Relative h1 error (Right) of Relaxed version of CEM-GMsFEM. In here, we consider case 1, the source term and domain are depicted in \autoref{pic:case1_f}.}
\label{pic:case1_error_r}
\end{figure}

\begin{figure}[htbp]
\centering
\includegraphics[scale=0.33]{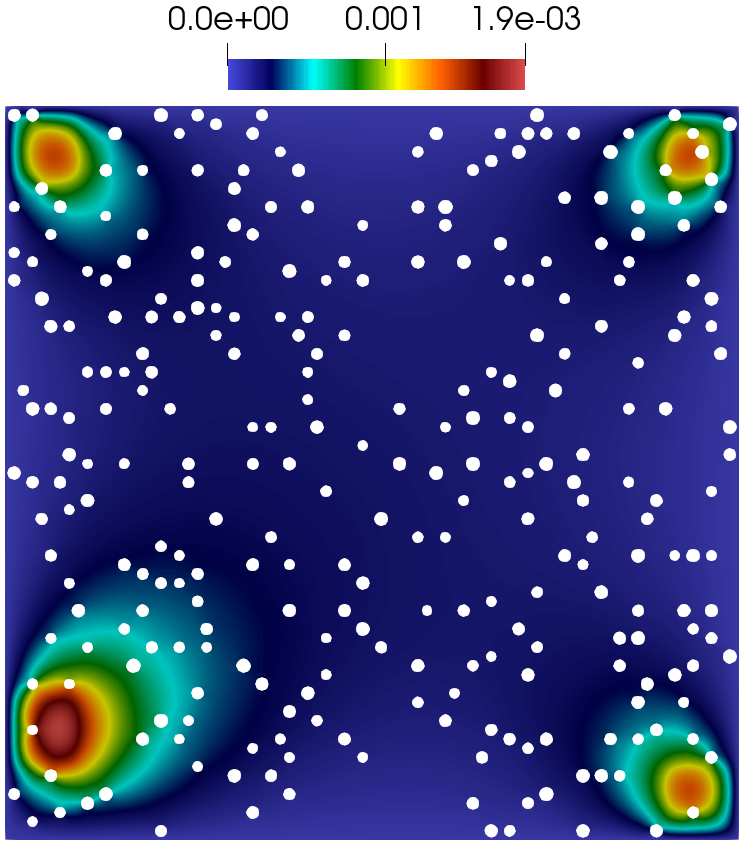}
\includegraphics[scale=0.33]{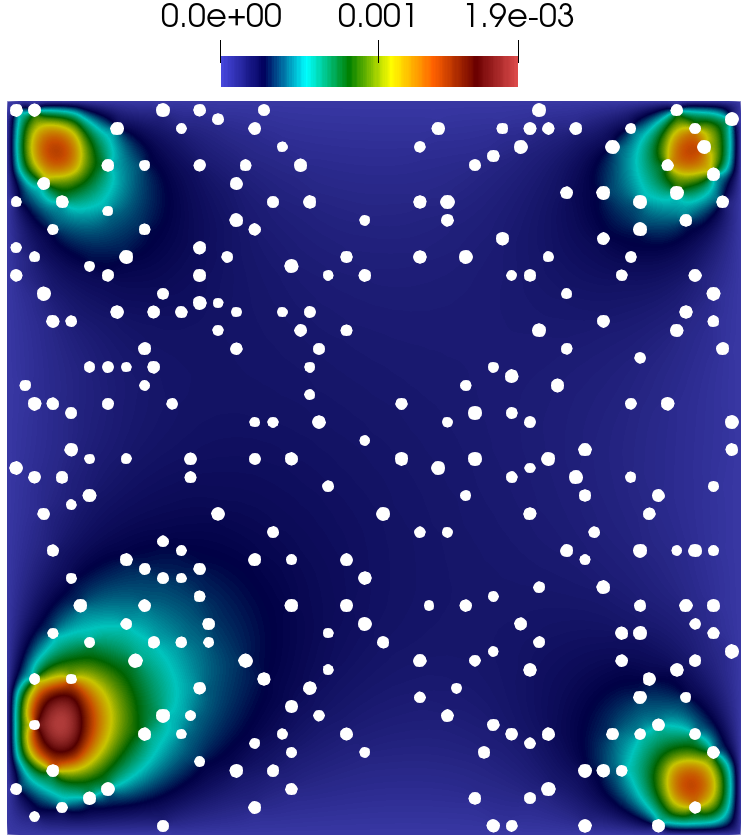}
\includegraphics[scale=0.33]{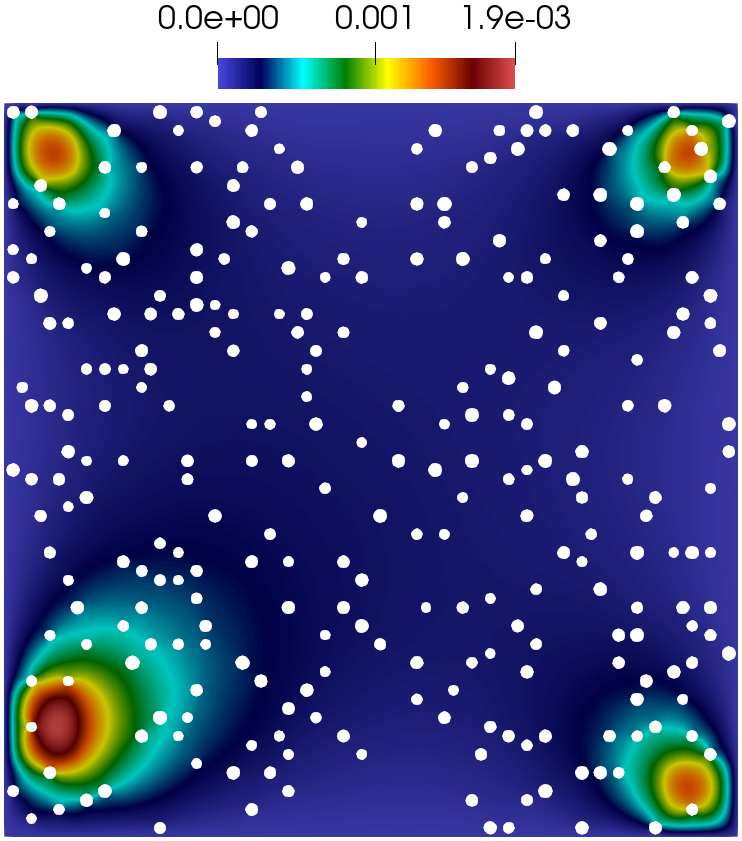}
\caption{Solutions: Reference solution (top left). Using $H=1/40$ and oversampling $3$ coarse layer, constraint version (top right), relaxed version (bottom). In here, we consider case 1, the source term and domain are depicted in \autoref{pic:case1_f}.}
\label{pic:case1_cemuh}
\end{figure}

In \autoref{tab:case1_error}, we detail the errors resulting from the refinement of the coarse mesh size alongside appropriately chosen oversampling layers. 
Both the constraint and relaxed versions of the multiscale solution demonstrate convergence towards the fine-grid solution, as evidenced by diminishing relative L2 and h1 errors with decreasing $H$. 
We further explore the influence of varying oversampling layers while maintaining a constant coarse mesh size $H$.
Figures \ref{pic:case1_error_c} (constraint version) and \ref{pic:case1_error_r} (relaxed version) illustrate the errors of the CEM-GMsFEM for varying coarse sizes and oversampling coarse layers. 
Significantly, these figures highlight the inherent advantage of a linear reduction in error with an increase in coarse mesh size.
Additionally, we visually represent the reference solution, constraint oversampling 3 coarse layers (all) diagonal numerical solutions from \autoref{pic:case1_error_c} and \ref{pic:case1_error_r} in \autoref{pic:case1_cemuh}. 
This provides a more vivid example showcasing the diminishing advantage of our method as $H$ decreases.

% 53275 nodes, 102926 cells
\subsection{Case 2}
In this specific scenario, we make use of heterogeneous media showcased on the right side of \autoref{pic:two_perforated_domain}. The source term is visualized in \autoref{pic:case2_f}. Here, the source term is characterized by three distinct values distributed throughout the domain: -1 (depicted in blue), 0 (in green), and 1 (highlighted in red) within specified regions.

\begin{figure}[htbp]
\centering
\includegraphics[scale=0.45]{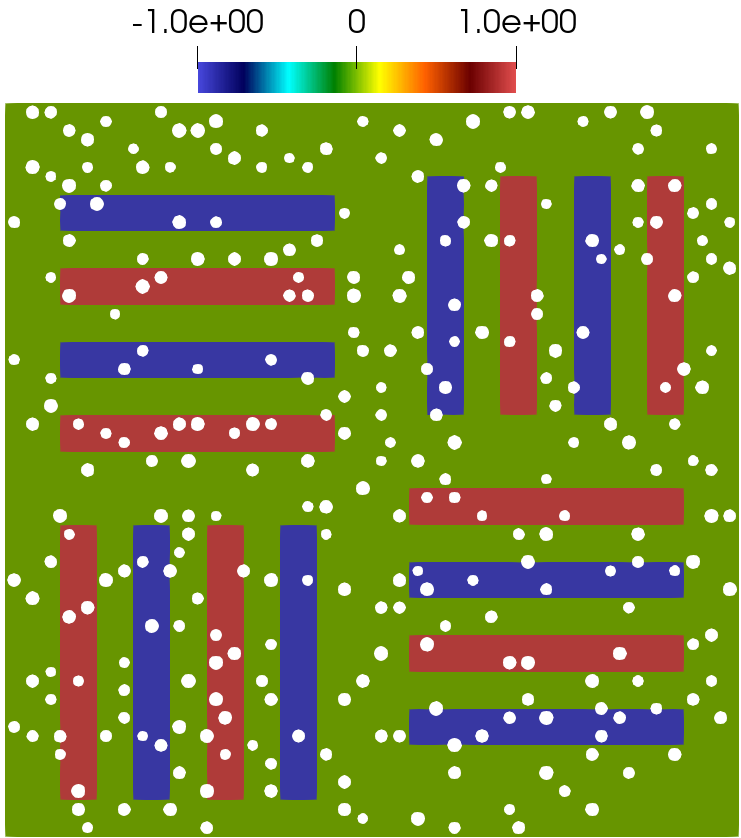}
\caption{Source term of case 2. There are only three values in the source term, $-1$ (blue), $0$ (green) and $1$ (red).}
\label{pic:case2_f}
\end{figure}

\begin{table}[htbp]
\centering
\begin{tabular}{|c|c|cc|cc|}
\hline
\multirow{2}*{$H$} & \multirow{2}*{$m$} & 
\multicolumn{2}{|c|}{Constraint Version} & 
\multicolumn{2}{|c|}{Relaxed Version}
\\ \cline{3-6}
& & $e_{L^2}$ & $e_{H^1}$
& $e_{L^2}$ & $e_{H^1}$ \\ \hline
$1/10$ & 2 & 2.38e-02 & 3.60e-02 & 1.40e-02 & 2.66e-02
\\
$1/20$ & 3 & 4.77e-03 & 1.58e-02 & 3.85e-03 & 1.40e-02
\\
$1/40$ & 4 & 4.98e-04 & 5.21e-03 & 4.68e-04 & 5.02e-03
\\ \hline
\end{tabular}
\caption{Numerical errors of CEM-GMsFEM using 3 basis functions in each oversampling domain $K_{i,m_i}$ with different coarse mesh size $H.$ In here, we consider case 2, the source term and domain are depicted in \autoref{pic:case2_f}.}
\label{tab:case2_error}
\end{table}

\begin{figure}[htbp]
\centering
\includegraphics[scale=0.36]{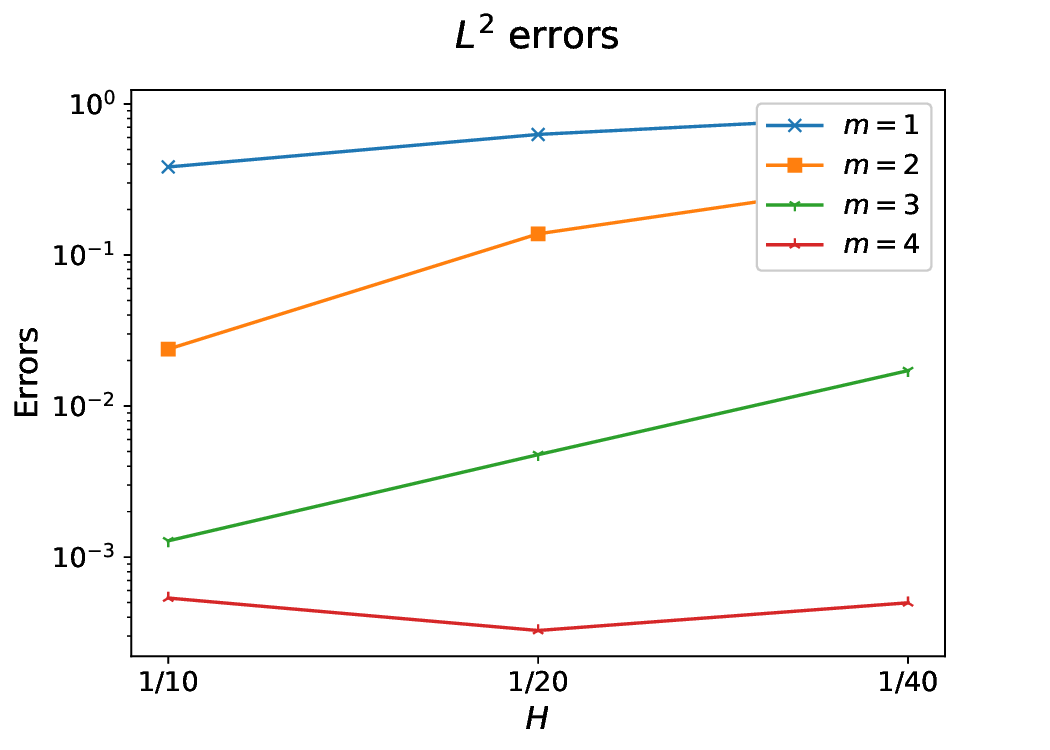}
\includegraphics[scale=0.36]{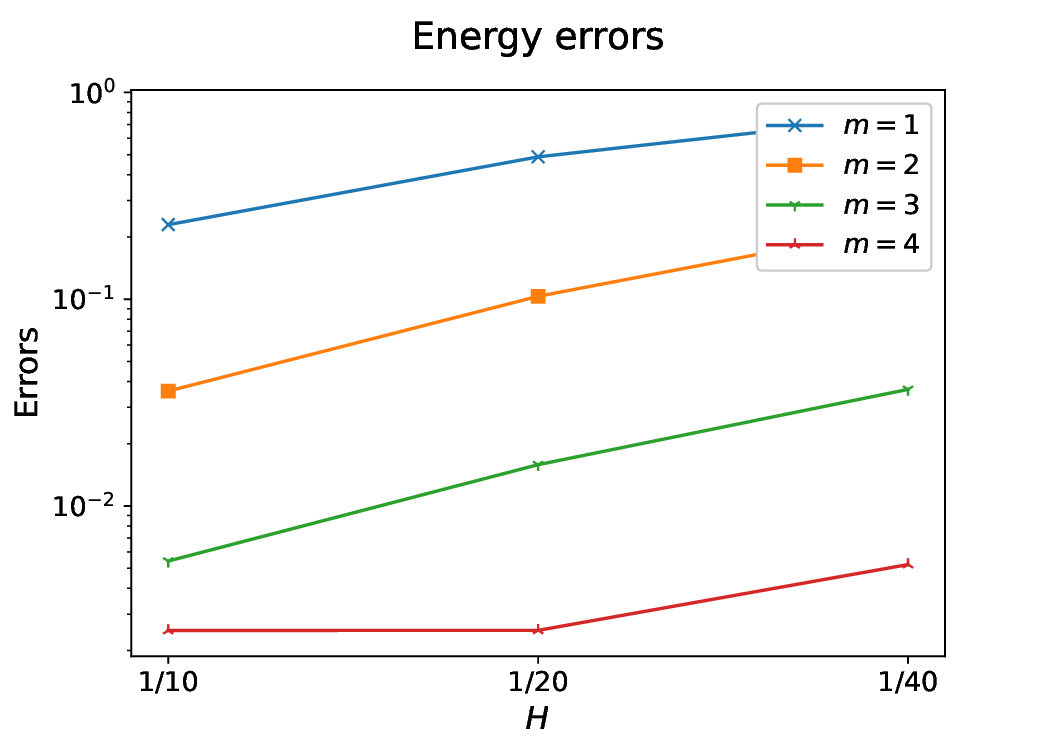}
\caption{Relative L2 error (Left) and Relative h1 error (Right) of Constraint version of CEM-GMsFEM. In here, we consider case 2, the source term and domain are depicted in \autoref{pic:case2_f}.}
\label{pic:case2_error_c}
\end{figure}

\begin{figure}[htbp]
\centering
\includegraphics[scale=0.36]{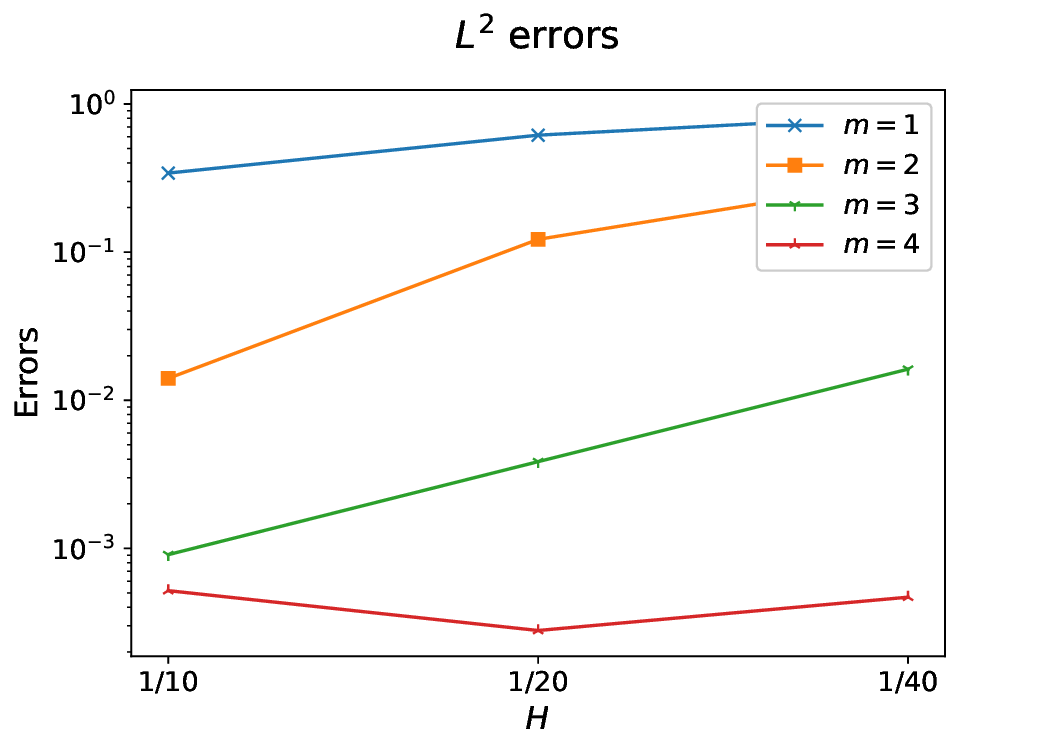}
\includegraphics[scale=0.36]{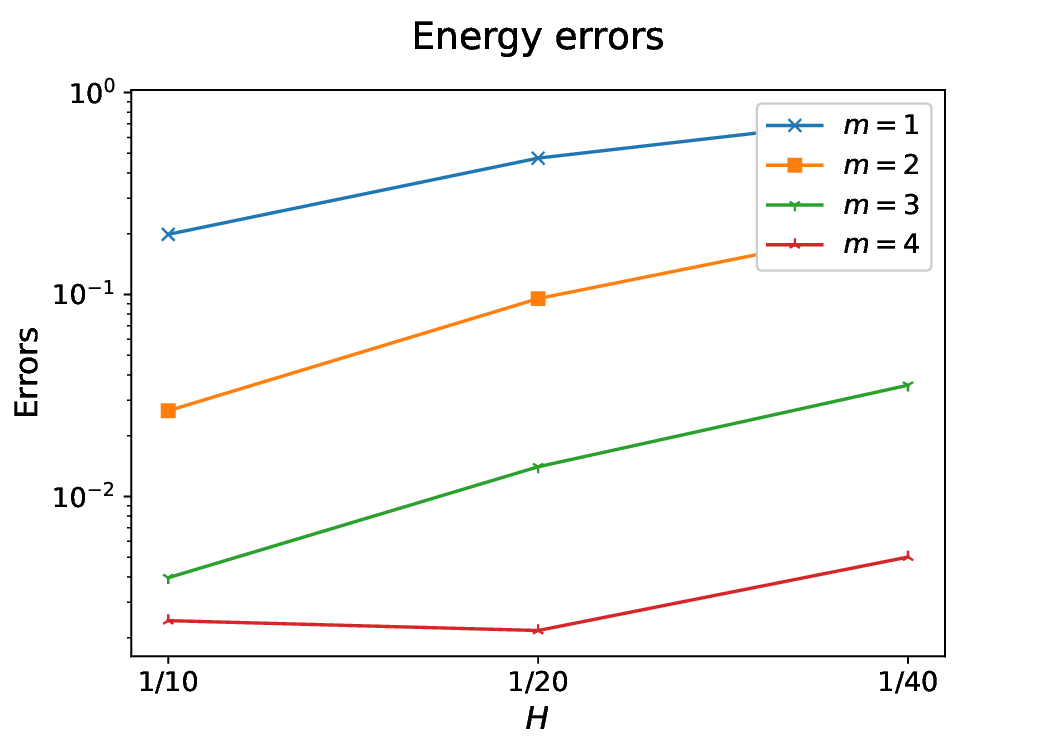}
\caption{Relative L2 error (Left) and Relative h1 error (Right) of Relaxed version of CEM-GMsFEM. In here, we consider case 2, the source term and domain are depicted in \autoref{pic:case2_f}.}
\label{pic:case2_error_r}
\end{figure}

\begin{figure}[htbp]
\centering
\includegraphics[scale=0.33]{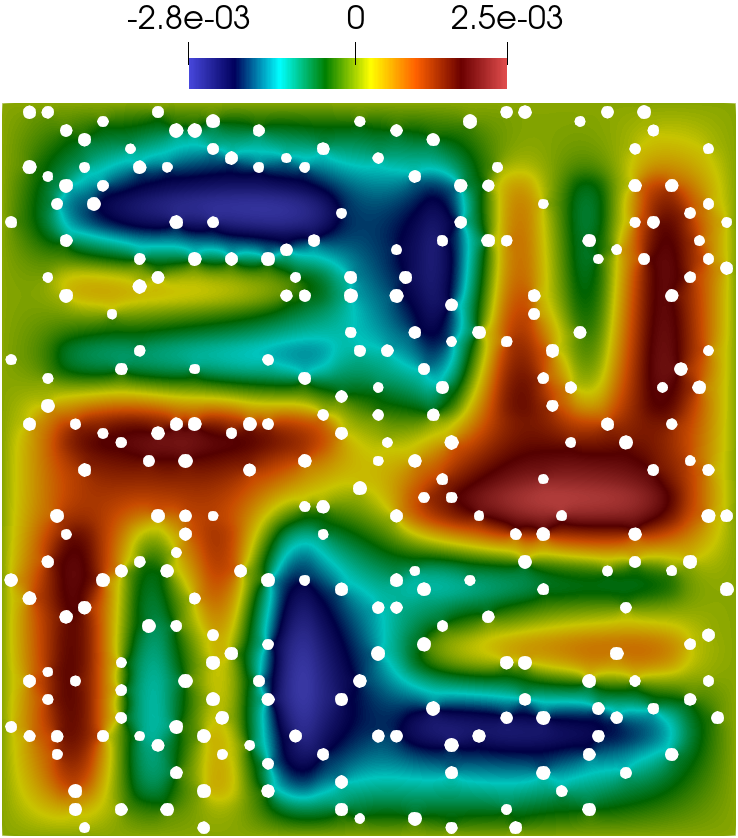}
\includegraphics[scale=0.33]{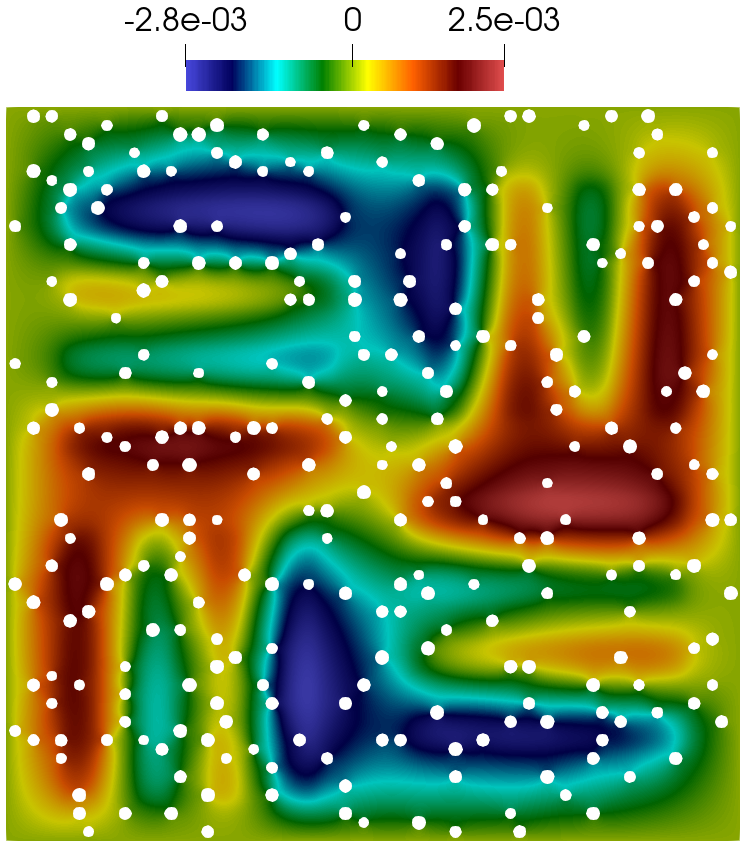}
\includegraphics[scale=0.33]{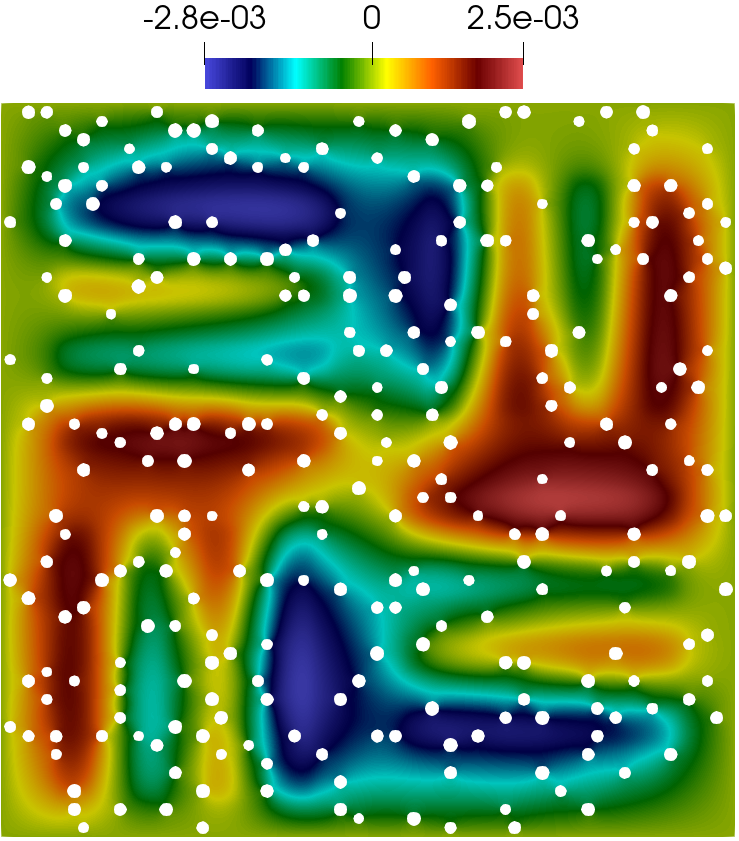}
\caption{Solutions: Reference solution (top left). Using $H=1/40$ and oversampling $3$ coarse layer, constraint version (top right), relaxed version (bottom). In here, we consider case 2, the source term and domain are depicted in \autoref{pic:case2_f}.}
\label{pic:case2_cemuh_c}
\end{figure}

\begin{figure}[htbp]
\centering
\includegraphics[scale=0.34]{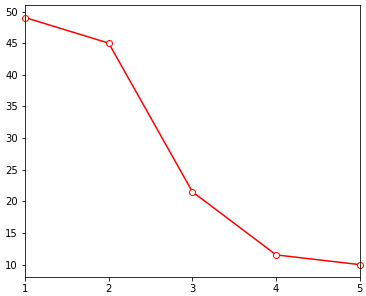}
\hspace{0.7cm}0
\includegraphics[scale=0.27]{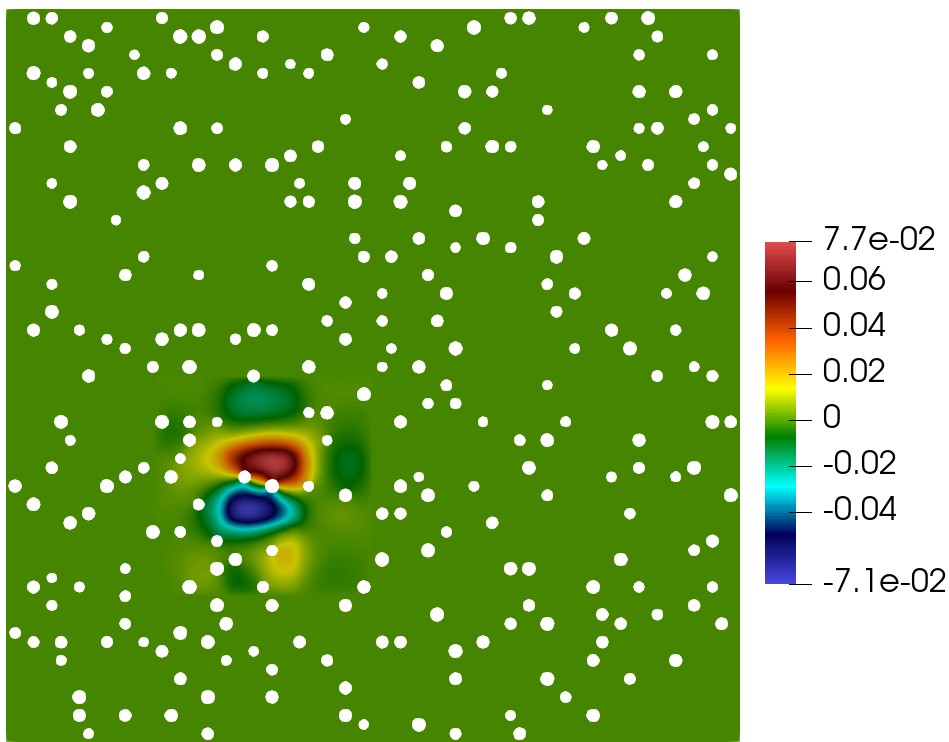}
\includegraphics[scale=0.27]{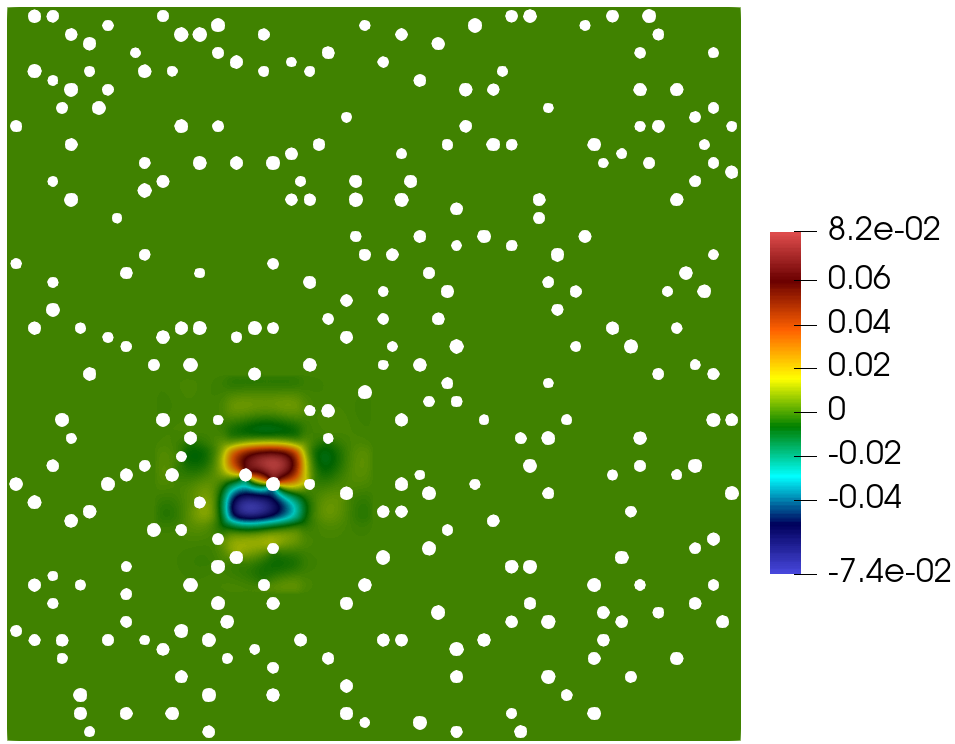}
\caption{A plot of the inverse of first five nonzero eigenvalues (top left), a multiscale basis function using one nonzero eigenfunction in each local auxiliary space (top right), and a multiscale basis functions using five eigenfunctions in each local auxiliary space (bottom).}
% 10 by 10 coarse grid, enlarging 1 coarse layer. K_33
\label{pic:case2_K33_decay}
\end{figure}

In \autoref{tab:case2_error}, we outlines the errors corresponding to the refinement of the coarse mesh size with judiciously chosen oversampling layers. 
Both the constraint and relaxed versions of the multiscale solution demonstrate convergence toward the fine-grid solution, as evidenced by diminishing relative L2 and h1 errors with decreasing $H$. 
Further exploration involves varying oversampling layers while maintaining a constant coarse mesh size $H$.
The errors of the CEM-GMsFEM for distinct coarse sizes and oversampling coarse layers are illustrated in \autoref{pic:case2_error_c} (constraint version) and \ref{pic:case2_error_r} (relaxed version). 
These figures distinctly highlight the advantageous linear reduction in error with increasing coarse mesh size inherent in our proposed approach.
To visually underscore these observations, a subset of numerical solutions from \autoref{pic:case2_error_c} and \ref{pic:case2_error_r} is selected. 
The corresponding solutions are depicted in \autoref{pic:case2_cemuh_c}.
In \autoref{pic:case2_K33_decay}, we display the first five nonzero eigenvalues obtained from solving the local spectral problem \eqref{eq:aux_weak}, and a multiscale basis function with one eigenfunction and five eigenfunctions in the local auxiliary space. We can observe that if enough eigenfunctions are exploited in solving the energy minimization problem, then the multiscale basis functions have a fast decay outside of the coarse block.

\section{Conclusions} \label{sec:conclusions}
In this study, we introduce and examine a generalized multiscale finite element method designed to minimize constraint energy for solving the Poisson problem in perforated domains. The method initiates by establishing an auxiliary space that employs eigenvectors associated with small eigenvalues in the local spectral problem. Subsequently, leveraging the principles of constraint energy minimization and oversampling, we generate two distinct multiscale basis functions. Our theoretical analysis suggests that with appropriate selection of the oversampling layer, the resulting multiscale basis functions exhibit a decay property. To validate the effectiveness of our approach, we present two numerical experiments.

% \section*{Acknowledgments}
\bibliographystyle{plain}
\bibliography{references}
\end{document}